\newtheorem{lemma}{Lemma}
\newtheorem{theorem}{Theorem}
\newtheorem{proposition}{Proposition}
\theoremstyle{definition}
\newtheorem{remark}{Remark}
\newtheorem{algorithm}{Algorithm}
\DeclareMathOperator{\D}{\operatorname{D}}
\DeclareMathOperator{\E}{\mathcal{E}}
\DeclareMathOperator{\F}{\mathcal{F}}
\DeclareMathOperator{\M}{\mathcal{M}}
\DeclareMathOperator{\SBV}{\mathcal{SBV}}
\DeclareMathOperator{\BV}{\mathcal{BV}}
\DeclareMathOperator{\Id}{\operatorname{Id}}
\DeclareMathOperator{\argmin}{arg\,min}
\DeclareMathOperator{\glim}{\operatorname{\Gamma-lim}}
\renewcommand{\L}{\mathcal{L}}
\renewcommand{\H}{\mathcal{H}}
\newcommand{\norm}[1]{\left\| #1 \right\|}
\newcommand{\iprod}[1]{\left\langle #1 \right\rangle}
\numberwithin{equation}{section}
\begin{document}

\title{A variational method for quantitative photoacoustic tomography with piecewise constant coefficients}

\date{October 5, 2015}

\author[1]{E.~Beretta ({elena.beretta@polimi.it})} 
\author[2]{M.~Muszkieta ({monika.muszkieta@pwr.edu.pl})}
\author[3]{W.~Naetar ({wolf.naetar@univie.ac.at})}
\author[3,4]{O.~Scherzer ({otmar.scherzer@univie.ac.at})}

\affil[1]{Dipartimento di Matematica \authorcr Politecnico di Milano \authorcr Via Edoardo Bonardi, 20133 Milan, Italy \vspace{.5\baselineskip}}
\affil[2]{Faculty of Pure and Applied Mathematics  \authorcr Wroclaw University of Technology \authorcr Wyb. Wyspianskiego 27, PL-50-370 Wroclaw, Poland \vspace{.5\baselineskip}}
\affil[3]{Computational Science Center \authorcr University of Vienna \authorcr Oskar Morgenstern-Platz 1, A-1090 Vienna, Austria \vspace{.5\baselineskip}}
\affil[4]{Radon Institute of Computational and Applied Mathematics \authorcr Austrian Academy of Sciences \authorcr Altenbergerstr. 69, A-4040 Linz, Austria \vspace{.5\baselineskip}}

\maketitle

\begin{abstract}
We consider the inverse problem of determining spatially heterogeneous \emph{absorption} and \emph{diffusion} coefficients $\mu(x),D(x)$, from \emph{a single measurement} of the \emph{absorbed energy} $\E(x)=\mu(x) {u}(x)$, where ${u}$ satisfies the elliptic partial differential equation
\begin{equation*}
	-\nabla \cdot (D(x) \nabla {u}(x)) + \mu(x) {u}(x) = 0 \quad \text{in $\Omega \subset \mathbb{R}^N$}.
\end{equation*}
This problem, which is central in \emph{quantitative photoacoustic tomography}, is in general ill-posed since it admits an infinite number of solution pairs. Using similar ideas as in \cite{NaeSch14}, we show that when the coefficients $\mu,D$ are known to be piecewise constant functions, a unique solution can be obtained. For the numerical determination of $\mu,D$, we suggest a variational method based on an \emph{Ambrosio-Tortorelli approximation} of a \emph{Mumford-Shah-like functional}, which we implement numerically and test on simulated two-dimensional data. 

\end{abstract}

\paragraph*{Keywords.} Quantitative photoacoustic tomography, mathematical imaging, inverse problems, Mumford–-Shah functional 

\paragraph*{AMS subject classifications.}  35R25, 35R30, 65J22, 65K10, 92C55

\section{Quantitative photoacoustic tomography}
\label{sec:introduction}

\subsection{Introduction}

\emph{Photoacoustic tomography} (PAT) is a medical imaging technique that combines electromagnetic excitation (in the visible spectrum) with ultrasound measurements. In a photoacoustic experiment, a translucent sample is illuminated by a laser pulse with wavelength $\lambda$ (which we assume to be fixed in this article). The absorbed optical energy leads to thermal expansion, generating an ultrasound pressure wave $p(x,t)$ that can be measured outside the sample.
\FloatBarrier 

Since the pulses used in PAT are very short, the complete energy is deposited almost instantaneously compared to travel times of acoustic waves, and the pressure wave $p$ can be assumed to have been generated by an \emph{initial pressure} $\H(x)$, that is, it satisfies the wave equation 
\begin{equation*}
 \begin{aligned}
 \partial_{tt}p(x,t) - c^2(x) \Delta p(x,t) &= 0 \\
	\partial_t p(x,0) &= 0 \\
	p(x,0) &= \H(x)
 \end{aligned}
\end{equation*}
and $p|_{\M}$, where $\M$ denotes a \emph{measurement surface}, can be obtained from ultrasound measurements \cite{CoxLauArrBea12,CoxLauBea09,KucKun08,WanWu07}. 

By solving an \emph{inverse problem for the wave equation} (see, e.g, \cite{KucKun08} for a review of inversion techniques), these ultrasound measurements can be used to estimate the \emph{initial pressure} $\H(x)$.  Since 
\begin{equation}
\label{eq:def_E}
	\H(x) = \Gamma(x) \E(x) = \Gamma(x) \mu(x) {u}(x),
\end{equation}
that is, $\H(x)$ is proportional to the \emph{absorbed energy} $\E(x)$, which is in turn proportional to the \emph{optical absorption coefficient} $\mu(x)$ at the applied wavelength $\lambda$ and the local fluence $u(x)$ (the time-integrated laser power received at $x$), the initial pressure visualizes contrast in $\mu$. The constant of proportionality $\Gamma(x)$ is called \emph{Gr\"uneisen coefficient} (or \emph{PA efficiency} since it describes the efficiency of conversion from absorbed energy to acoustic signal) \cite{CoxLauArrBea12,CoxLauBea09}.

In \emph{quantitative photoacoustic tomography} (qPAT), the goal is to apply PAT to determine (inhomogeneous) optical material properties of the sample (which are of diagnostic interest). To do so, an additional non-linear \emph{inverse problem for light transport} has to be solved, since the fluence $u(x)$ is inhomogeneous and itself dependent on the optical properties of the sample \cite{CoxLauArrBea12,CoxLauBea09}.
 
While it is also possible to attack the acoustic and optical inverse problems simultaneously (see, e.g., \cite{BerBonHabPri14,HalNeuRab15}), here, we assume that the acoustic part of the problem has been solved successfully, i.e., that (possibly noisy) data $\H^\delta \approx \H$ (with $\delta$ signifying the noise level) are available. 

In this article, we utilize the \emph{diffusion approximation} (which is valid in highly scattering media \cite{CoxLauArrBea12,WanWu07}) of the \emph{radiative transfer equation} to model the fluence distribution. It is, however, also possible to use a radiative transfer model for qPAT (see \cite{DeCTraSej15,PulCoxArrKaiTar15,SarTarCoxArr13,TarCoxKaiArr12,YaoSunJia09}), albeit at the cost of increased computational and analytical complexity. 

The diffusion model (in a Lipschitz domain $\Omega \subset \mathbb{R}^N$) is given by
\begin{equation}
\label{eq:diff_eq_dirichlet}
 \begin{aligned}
	-\nabla \cdot (D(x) \nabla {u}(x)) + \mu(x) {u}(x) &= 0 \quad \text{in $\Omega$}\\
	{u}(x)|_{\partial \Omega} &= g(x).
 \end{aligned}	
\end{equation}
The parameter $D$ is called \emph{diffusion coefficient}. The Dirichlet boundary data $g$ (which we assume to be continuous and known in this article) describes the illumination pattern. Note that this is a time-independent model, again due to the fact that energy is deposited almost instantaneously compared to time scales of the acoustic system. 

The difficulty of the inverse problem varies depending on which parameters of the model are assumed to be known. We present three inverse problems often considered in the literature. The hardest one is: 
\begin{equation}
\label{prob:problem3}
	\text{ Determine $(\mu,D,\Gamma)$ from measurements of $\H^\delta$. }	
	\tag{P3}
\end{equation}

Bal and Ren showed in \cite{BalRen11a} that for arbitrary coefficients $\mu,D,\Gamma \in W^{1,\infty}(\Omega)$, this problem is unsolvable, even if multiple measurements of $\H$ (with different known boundary illumination patterns $g$) are available. 

In \cite{NaeSch14}, the authors showed that with a restriction to \emph{piecewise constant parameters}, unique reconstruction of all three unknown parameters $\mu,D,\Gamma$ from multiple measurements is possible (under a condition on the directions of $\nabla {u_k}$, where $u_k$ is the fluence of the $k$th illumination pattern). Furthermore, an analytical reconstruction procedure was suggested and implemented numerically, which, unfortunately, is relatively sensitive to noise.

Alberti and Ammari \cite{AlbAmm15} also established a unique reconstruction result, based on \emph{morphological component analysis} (a sparsity approach), in a slightly more general setting (which assumes different degrees of smoothness of the coefficients and the fluence). They also provide numerical reconstructions, which were, however, not tested for noise sensitivity in the case of \eqref{prob:problem3}.

To simplify the problem, it is often assumed that the Gr\"uneisen coefficient $\Gamma$ is known or constant, which implies that the absorbed energy can be estimated with $\E^\delta = \frac{\H^\delta}{\Gamma}\approx \E$ (with $\delta$ again denoting the noise level). It remains to solve
\begin{equation}
\label{prob:problem2}
	\text{ Determine $(\mu,D)$ from measurements of $\E^\delta$. }	
	\tag{P2}
\end{equation}
If only a single measurement of $\E^\delta$ is given, this inverse problem is also ill-posed (since it has infinitely many solutions pairs, see \cite{CoxArrBea09,NaeSch14,ShaCoxZem11}).

However, in \cite{AlbAmm15}, the authors were able to recover $\mu$ (independently of the light transfer model used) from a single measurement of $\E$ (again using a \emph{sparsity method}, assuming different degrees of smoothness of the coefficients and the fluence).

In \cite{BalRen11a,BalUhl10} it was shown that this problem is uniquely solvable if two measurements of $\E$ (corresponding to well-chosen boundary illuminations $g_1,g_2$) are available. Numerically, this multi-illumination case was treated in \cite{BalRen11a,GaoOshZha12,RenGaoZhao13,ShaCoxZem11,TarCoxKaiArr12,Zem10}, using a multitude of different techniques, see also the review paper by Cox et al.  \cite{CoxLauArrBea12}.

The simplest case works under the assumption that the diffusion coefficient $D$ is also known:
\begin{equation}
\label{prob:problem1}
	\text{ Determine $\mu$ from measurements of $\E^\delta$. }	
	\tag{P1}
\end{equation}
This inverse problem has a unique solution even for a single measurement, which can be seen by substituting $\mu u = \E^\delta$ in \eqref{eq:diff_eq_dirichlet}, providing the possibility to solve for $u$ \cite{BanBagVasRoy08}. For other (numerical) approaches, cf. \cite{CoxLauArrBea12}. To the authors knowledge, this simplified problem is the only case for which practical viability (with experimental data) has been established both for phantoms \cite{YuaJia06} and biological samples \cite{SunSobJia09,SunSobJia13}. 

\subsection{Contributions of this article}

In this article, we consider the problem \eqref{prob:problem2} using a \emph{single measurement} for our reconstructions, a problem that is in general ill-posed. Similar to \cite{NaeSch14} (which treats \eqref{prob:problem3} with multiple measurements), a restriction to piecewise constant $\mu,D$ also proves to be useful for this problem. In fact, as shown in Section \ref{sec:recovery_pw_const}, when the parameters $\mu,D$ are piecewise constant functions (and noise-free data are given), the inverse problem \eqref{prob:problem2} can be solved uniquely, without any further assumptions. 

In Section \ref{sec:mumford_shah_parameter_detection}, we present a variational model for the reconstruction of piecewise constant $\mu,D$ from noisy data $\E^\delta$ based on the \emph{Ambrosio-Tortorelli approximation} of a \emph{Mumford-Shah-like functional}. Compared to the two-step reconstruction process presented in \cite{NaeSch14} (which was introduced for \eqref{prob:problem3}, but is applicable for \eqref{prob:problem2}), which first detects the regions where the parameters are constant, then reconstructs the parameter values from jumps of the data and its derivatives, this variational approach is much more robust with respect to noise. This is mainly due to the fact that the numerical approach presented in \cite{NaeSch14} requires almost perfect jump detection in the second derivatives of $\E^\delta$ to get reasonable estimates of $\mu,D$ (since the jumps have to form a full partition of the domain $\Omega$), which is highly challenging in the presence of significant amounts of noise.  This is not the case for the variational method presented here.

Finally, a description of our implementation and numerical results can be found in Section \ref{sec:numerical_results}.

\section{Recovery of piecewise constant coefficients}
\label{sec:recovery_pw_const}

In this Section we show that piecewise constant parameters $\mu,D$ can be recovered uniquely from a single measurement of the absorbed energy $\E(\mu,D)$.

In the following, let $(\Omega_m)_{m=1}^M$ be a partition of $\Omega \subset \mathbb{R}^N$ into open sets and $\mu,D$ piecewise constant on $(\Omega_m)_{m=1}^M$, that is, for $\Omega_m$ open and $\mu_m,D_m \in \mathbb{R}^+$ 
\begin{equation}
\label{eq:pw_const}
	\overline\Omega = \bigcup_{m=1}^M \overline\Omega_m, \enskip \mu = \sum_{m=1}^M \mu_m 1_{\Omega_m}, \enskip D = \sum_{m=1}^M D_m 1_{\Omega_m}.
\end{equation}
Furthermore, for $k \in \mathbb{N}$, denote by 
\begin{equation*}
J_k(f)=\Omega \setminus \bigcup \{ B \subset \Omega \mid B \text{ is open and } f \in C^k(B) \}
\end{equation*}
the discontinuities of a function $f \in L^\infty(\Omega)$ and its derivatives up to $k$-th order.

First, we show that coefficient discontinuities can be recovered from the data $\E$.

\begin{proposition}
\label{prop:pw_const_prop_1}
Let $\mu,D$ be of the form \eqref{eq:pw_const} and $\E=\E(\mu,D)$ satisfy \eqref{eq:def_E},\eqref{eq:diff_eq_dirichlet} weakly. Then we have
\begin{equation}
\label{eq:prop_jumps_D2}
	J_0(\mu) \cup J_0(D) = J_2(\E).
\end{equation}
\end{proposition}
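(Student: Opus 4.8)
The plan is to prove the two inclusions separately, using throughout that both $J_0(\mu)\cup J_0(D)$ and $J_2(\E)$ are closed by definition (complements of open sets), and that $u$ is real-analytic in the interior of each $\Omega_m$ — since there the equation reduces to the constant-coefficient equation $-D_m\Delta u+\mu_m u=0$ — and, assuming the interfaces $\partial\Omega_m$ are smooth, that $u$ extends smoothly up to each interface from either side by elliptic (Schauder) regularity for the transmission problem. I will also use the two transmission conditions satisfied by a weak $H^1$ solution: $u$ is continuous across each interface, and the conormal flux $D\,\partial_\nu u$ is continuous across it.

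For the inclusion $J_2(\E)\subseteq J_0(\mu)\cup J_0(D)$, suppose $x_0\notin J_0(\mu)\cup J_0(D)$. Then $\mu$ and $D$, being piecewise constant and continuous at $x_0$, are constant on some ball $B\ni x_0$; on $B$ interior regularity makes $u$, hence $\E=\mu u$, smooth, so $x_0\notin J_2(\E)$. The reverse inclusion is the substantial one. Since $J_2(\E)$ is closed, it suffices to show that every point $x_0$ lying on a smooth piece of a single interface $\Gamma$ separating two regions $\Omega_m,\Omega_n$ with $(\mu_m,D_m)\neq(\mu_n,D_n)$ belongs to $J_2(\E)$; the remaining lower-dimensional set of junction points of the partition then follows by approximation, again because $J_2(\E)$ is closed. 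The one ingredient I must secure first is strict positivity: under the natural hypothesis that the illumination satisfies $g\geq 0$, $g\not\equiv 0$, the maximum principle (using $\mu,D>0$) gives $u>0$ in $\Omega$, so in particular $u_0:=u(x_0)>0$.

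The heart of the argument is then two short jump computations at such an $x_0$. If $\mu_m\neq\mu_n$, the one-sided limits of $\E=\mu u$ are $\mu_m u_0$ and $\mu_n u_0$, so $\E$ jumps by $(\mu_m-\mu_n)u_0\neq 0$ and already $x_0\in J_0(\E)\subseteq J_2(\E)$. If instead $\mu_m=\mu_n=:\mu_0$ but $D_m\neq D_n$, then $\E=\mu_0 u$ is continuous near $\Gamma$ and I must pass to second derivatives: evaluating the equation up to the interface from each side gives one-sided limits $\Delta u\to(\mu_0/D_m)u_0$ and $\Delta u\to(\mu_0/D_n)u_0$, which differ because $D_m\neq D_n$ and $u_0>0$. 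If $u$ were $C^2$ near $x_0$ its Laplacian would be continuous there, a contradiction; hence $u\notin C^2$ near $x_0$ and $x_0\in J_2(\E)$.

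The main obstacle, and the reason the order in $J_2(\E)$ is exactly two, is this second case: a jump in $D$ alone is invisible in $\E$ and may even be invisible in $\nabla\E$ (the flux condition forces a jump in $\partial_\nu u$ only where $\partial_\nu u\neq 0$), but it is always detected in the second derivatives through the jump in $\Delta u$, provided $u_0\neq 0$. The two points demanding technical care are therefore the strict positivity $u>0$ (maximum principle, with its hypotheses on $g$) and the up-to-interface smoothness of $u$ needed to make the one-sided traces of $\E$ and $\Delta u$ meaningful (elliptic regularity for transmission problems with smooth interfaces), together with the closedness argument used to upgrade the statement from smooth interface points to all of $J_0(\mu)\cup J_0(D)$.
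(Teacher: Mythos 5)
Your two core jump computations are exactly the ones the paper uses: a jump of $\mu$ is visible in $\E=\mu u$ itself because $u$ is continuous and non-vanishing, and a jump of $D$ alone is visible in the second derivatives because the equation forces $\Delta u=(\mu_m/D_m)\,u$ in each $\Omega_m$. You are also right --- and more explicit than the paper --- that both computations need $u(x_0)\neq 0$, which you correctly obtain from the maximum principle under $g\geq 0$, $g\not\equiv 0$; the paper uses this silently.

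However, your proof of the inclusion $J_0(\mu)\cup J_0(D)\subset J_2(\E)$ rests on hypotheses the proposition does not grant. The partition in \eqref{eq:pw_const} is only into open sets: nothing is assumed about $\partial\Omega_m$, so the transmission-regularity results (Li--Nirenberg/Schauder up to a smooth interface) you invoke are not available, and the density of ``smooth two-region interface points with differing coefficients'' inside $J_0(\mu)\cup J_0(D)$, which your closedness/approximation step needs to absorb the junction points, is asserted rather than proved and is not obvious for an arbitrary partition. The idea you are missing is that none of this machinery is needed: the one-sided limits of $\Delta u$ you want come directly from the equation, not from up-to-the-interface regularity. Inside each $\Omega_m$, interior regularity gives $u\in C^\infty(\Omega_m)$ and $\Delta\E=\frac{\mu_m^2}{D_m}u$ pointwise; since $u\in C^0(\overline\Omega)$ by De Giorgi--Nash--Moser and $\bigcup_m\Omega_m$ is dense in $\Omega$, the function $\Delta\E$ agrees on a dense open set with $\frac{\mu^2}{D}u$, which admits no continuous extension across any point of $J_0\bigl(\frac{\mu^2}{D}\bigr)$ at which $u\neq 0$ (nearby values cluster at two distinct numbers $c_1u(x)\neq c_2u(x)$). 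So if $\E$ were $C^2$ on a ball around such a point, one gets an immediate contradiction --- no traces, no interface smoothness, no separate treatment of junction points. Combined with $J_0(\mu)=J_0(\E)$ and the elementary identity $J_0(\mu)\cup J_0\bigl(\frac{\mu^2}{D}\bigr)=J_0(\mu)\cup J_0(D)$, this yields the inclusion in full generality, whereas your argument as written proves the proposition only for partitions with smooth interfaces.
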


\begin{proof}
First, take an arbitrary open ball $B$ with $B \cap (J_0(\mu) \cup J_0(D)) = \emptyset$ and notice that, by interior elliptic regularity, ${u} \in C^\infty(B)$ and hence also $\E \in C^\infty(B)$. It follows that
\begin{equation*}
	J_0(\mu) \cup J_0(D) \supset J_2(\E).
\end{equation*}
By the De Giorgi-Nash-Moser theorem \cite[Theorem 8.22]{GilTru01}, we have ${u} \in C^0(\overline\Omega)$, so 
\begin{equation}
\label{eq:mu_data_jump}
	J_0(\mu) = J_0(\E).
\end{equation}
Finally, as $u \in C^\infty(\Omega_m), \, m=1,\ldots,M$, \eqref{eq:diff_eq_dirichlet} holds strongly in all $\Omega_m$, and we get 
\begin{equation}
\label{eq:diff_eq_scalar}
\begin{aligned}
	-D_m \Delta {u} + \mu_m {u} &= 0  \quad \text{in $\Omega_m$} \\
	-D_m \Delta \E + \mu_m \E &= 0 \quad \text{in $\Omega_m$} 	
\end{aligned}
\end{equation}
since $\mu_m,D_m$ are scalars. Hence, $\Delta \E = \frac{\mu}{D} \E = \frac{\mu^2}{D} {u}$ in $\bigcup_{m=1}^M \Omega_m$, which shows that $\Delta \E$ cannot be continuous where $\frac{\mu^2}{D}$ jumps (since ${u} \in C^0(\overline\Omega)$), hence 
\begin{equation*}
	J_0\left(\frac{\mu^2}{D}\right) \subset J_2(\E).
\end{equation*}
The Proposition follows from $J_0(\mu) \cup J_0\left(\frac{\mu^2}{D}\right) = J_0(\mu) \cup J_0(D)$.
\end{proof}

\begin{remark}
\label{rem:edges_first_order}
The proof of Proposition \ref{prop:pw_const_prop_1} also shows that jumps in $\mu$ and $D$ have different effects on the data $\E$. By \eqref{eq:mu_data_jump}, the jumps 	 of $\mu$ can be recovered from those in $\E$. Jumps of $D$ (not coinciding with jumps of $\mu$) on the other hand, are smoothed in the data and have to be obtained from the derivatives of $\E$. In particular, the discontinuities of $\Delta \E$ suffice to find the jumps in $D$.

Furthermore, note that from discontinuities of $\nabla \E$ (or, equivalently, $|\nabla \E|^2$), one can, in general, only recover the part of the jumps of $D$ where $\nabla {u}(x) \cdot \nu(x) \neq 0$ (where $\nu$ denotes a normal vector to the hyper-surface of discontinuity in $D$), as $\nabla {u}$ may be continuous across parts of the jump set of $D$ where $\nabla {u}(x)$ is tangential. 

To see this, consider for instance the case of a partition consisting of a homogeneous region $\Omega_1$ with (non-touching) smooth inclusions $\Omega_2,\ldots,\Omega_M$. In this case, we have ${u} \in C^\infty(\overline\Omega_m)$ for all $m=1,\ldots,M$ (due a result of Li and Nirenberg, see \cite{LiNir03}) and the interface conditions 
\begin{equation}
\label{eq:interface_cond}
	D_m \nabla {u}|_{\Omega_m} \cdot \nu = D_1 \nabla {u}|_{\Omega_1} \cdot \nu, \quad
	\nabla {u}|_{\Omega_m} \cdot \tau =  \nabla {u}|_{\Omega_1} \cdot \tau \quad 
\end{equation}
hold point-wise on $\partial\Omega_m$ for $m=2,\ldots,M$, where $\nu,\tau$ denote vectors normal and tangential to $\partial\Omega_m$ (see, e.g., \cite{NaeSch14} for a derivation).  Now, let 
\begin{equation*}
\Lambda:=\{x \in \bigcup_{m=2}^M \partial\Omega_m \mid \nabla {u}(x) \cdot \nu(x) \neq 0\}.
\end{equation*}
We have $J_0(D) \cap \Lambda \subset J_1({u})$ since, by the interface conditions \eqref{eq:interface_cond}, $\nabla {u}$ cannot be continuous across $J_0(D) \cap \Lambda$ (it changes length). On the other hand, if we take an open ball $B \subset \Omega_k \cup \Omega_1$ (for some $2 \leq k \leq M$) with $B \cap J_0(D) \cap \Lambda = \emptyset$, we have, again by \eqref{eq:interface_cond},
\begin{equation}
\label{eq:nabla_phi_cont}
	\nabla {u}|_{\Omega_k}=\nabla {u}|_{\Omega_1} \quad \text{ on $\partial\Omega_k \cap B$}
\end{equation}
since either $D_k=D_1$ or $\nabla {u}|_{\Omega_k \cap B} \cdot \nu = \nabla {u}|_{\Omega_1 \cap B} \cdot \nu = 0$. As we have ${u} \in C^\infty(\overline\Omega_k)$, ${u} \in C^\infty(\overline\Omega_1)$, \eqref{eq:nabla_phi_cont} implies ${u} \in C^1(B)$ and therefore $J_1({u}) \subset \overline{J_0(D) \cap \Lambda}$, hence
\begin{equation*}
	\overline{J_1({u})} = \overline{J_0(D) \cap \Lambda}.
\end{equation*}
This shows that, in general, discontinuities in the second derivatives of the data $\E$ have to be identified in order to get the whole jump set of $D$.
\end{remark}

\vspace\baselineskip

Once the partition $(\Omega_m)_{m=1}^M$ is known, the coefficients $\mu,D$ can be recovered from the jumps in $\E, \frac{\Delta \E}{\E}$ and the boundary values of ${u}$.

\begin{proposition}
\label{prop:pw_const_prop_2}
Let $\mu,D$ be of the form \eqref{eq:pw_const} with $(\Omega_m)_{m=1}^M$ known. Furthermore, let $\E$ satisfy \eqref{eq:def_E},\eqref{eq:diff_eq_dirichlet} for a known boundary illumination $g$. Then, $\mu$ and $D$ can be determined uniquely from $\E$.
\end{proposition}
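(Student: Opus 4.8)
The plan is to reduce the recovery of the two scalars $(\mu_m,D_m)$ in each region to the recovery of the single scalar $\mu_m$, and then to fix every $\mu_m$ by combining three pieces of information read off from $\E$: the ratio $\mu_m/D_m$ inside each region, the ratio $\mu_m/\mu_{m'}$ across each interface, and the absolute value of $\mu_m$ on regions meeting $\partial\Omega$. First I would reuse the identity already obtained in the proof of Proposition~\ref{prop:pw_const_prop_1}: since $\E$ is smooth in each $\Omega_m$ and satisfies $-D_m\Delta\E+\mu_m\E=0$ there,
\begin{equation*}
\frac{\Delta\E}{\E}=\frac{\mu_m}{D_m}=:r_m\qquad\text{in }\Omega_m,
\end{equation*}
a constant computable from the data wherever $\E\neq0$. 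Hence it suffices to determine $\mu_m$, after which $D_m=\mu_m/r_m$.

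Next I would note that $u>0$ in $\Omega$, so that all the quotients below are well defined. This follows from the maximum principle applied to \eqref{eq:diff_eq_dirichlet} (the zeroth-order coefficient $\mu$ is nonnegative and $D>0$) together with the natural positivity assumption $g\geq0$, $g\not\equiv0$ on the illumination. Granting this, I use $\E=\mu u$ and the continuity $u\in C^0(\overline\Omega)$ across interfaces: approaching a point of $\partial\Omega_m\cap\partial\Omega_{m'}$ from the two sides gives one-sided limits $\mu_m u$ and $\mu_{m'}u$ of $\E$, so the ratio of the one-sided traces of $\E$ equals $\mu_m/\mu_{m'}$. Thus each interface yields the ratio of neighbouring absorption values from $\E$ alone (this reading is valid whether or not $\E$ actually jumps there).

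Finally I would anchor and propagate. On any region touching the boundary, the trace relation $\E=\mu_m u=\mu_m g$ gives $\mu_m=\E/g$ on $\partial\Omega_m\cap\partial\Omega$ (using $g>0$ there), fixing its absolute scale; since $\overline\Omega=\bigcup_m\overline\Omega_m$, at least one region meets $\partial\Omega$. Because $\Omega$ is connected, the adjacency graph of $(\Omega_m)_{m=1}^M$ is connected, so starting from an anchored region and multiplying the interface ratios along a connecting path determines $\mu_m$ in every region; consistency around cycles is automatic, as the ratios are genuine quotients of the underlying constants. With all $\mu_m$ known, $D_m=\mu_m/r_m$ follows, giving uniqueness.

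The main obstacle I expect is the rigorous justification of the limiting arguments: one must guarantee that $\E$ admits well-defined, nonzero one-sided traces on the interfaces and on $\partial\Omega$ (which rests on continuity of $u$ up to the relevant boundaries and on the positivity $u>0$ from the maximum principle), and one must confirm that the adjacency graph is connected and contains a boundary-adjacent region, so that the propagation in the last step actually reaches the whole partition.
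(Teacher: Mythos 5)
Your proposal is correct and follows essentially the same route as the paper's proof: interface ratios of the one-sided limits of $\E$ (via continuity of $u$) give $\mu_m/\mu_n$, the boundary relation $\E/g$ anchors the absolute scale, propagation across interfaces recovers all $\mu_m$, and $D_m$ then follows from $\mu_m/D_m = \Delta\E/\E$ in each $\Omega_m$. Your additional remarks on positivity of $u$ (via the maximum principle) and connectivity of the adjacency graph make explicit two points the paper leaves implicit, but they do not change the argument.
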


\begin{proof}
Let $x \in \partial\Omega_m \cap \partial\Omega_n$ for some $m,n \in \{1,\ldots,M\}$. Since ${u} \in C^0(\overline\Omega)$, 
\begin{equation}
\label{eq:mu_jump}
	\frac{\lim_{y \to x} \E(y)|_{\Omega_m}}{\lim_{z \to x} \E(z)|_{\Omega_n}}= \frac{\mu_m {u}(x)}{\mu_n {u}(x)} =  \frac{\mu_m}{\mu_n}.
\end{equation}
Furthermore, take $x \in \partial\Omega \cap \partial\Omega_k$ (for some $k \in \{1,\ldots,M\}$). We have
\begin{equation*}
	\lim_{y \to x} \frac{\E(y)|_{\Omega_k}}{g(x)}=\mu_k \lim_{y \to x} \frac{{u}(y)|_{\Omega_k}}{g(x)} = \mu_k.
\end{equation*}
Starting in $\Omega_k$ and using \eqref{eq:mu_jump} on all interfaces, we recover all $\mu_m, \,m=1,\ldots,M$.

Finally, to obtain $D_m, \, m=1,\ldots,M$, we use \eqref{eq:diff_eq_scalar}, that is,
\begin{equation*}
\label{eq:mu_D_ratio}
	\frac{\mu_m}{D_m}=\frac{\Delta \E(y)}{\E(y)} \quad \text{for all $y \in \Omega_m$}.
\end{equation*}
\end{proof}

\begin{remark}
If $g$ is not known, Proposition \ref{prop:pw_const_prop_2} can be used to determine the parameters $\mu,D$ up to a constant.
\end{remark}

\section{A Mumford-Shah-like functional for qPAT}
\label{sec:mumford_shah_parameter_detection}

In Section \ref{sec:recovery_pw_const}, we showed that piecewise constant absorption and diffusion coefficients $\mu,D$, can be recovered from noise-free data $\E$ by an analytical procedure which first determines the coefficient jumps, i.e., the partition $(\Omega_m)_{m=1}^M$, and then the numerical values $(\mu_m,D_m)_{m=1}^M$. In \cite{NaeSch14}, such a two-step approach was implemented numerically (for the problem \eqref{prob:problem3}). 

In the presence of significant amounts of noise in the data, however, such an approach is infeasible, since it requires the detection of jumps of derivatives up to second order of the data $\E^\delta$. In particular, jumps may remain partially undetected (e.g., if the edge detection has to be restricted to first derivatives due to noise, see Remark \ref{rem:edges_first_order} and the numerical examples in Section \ref{sec:numerical_results}), leading to an incomplete estimated partition and therefore highly erroneous parameter estimates. 

To overcome this problem, we propose a variational approach favouring piecewise constant solutions that estimates the numerical values of piecewise constant $\mu,D$ and their jumps at the same time. 

There are multiple different methods for piecewise constant regularization of inverse problems. For instance, a popular class of methods is based on the \emph{level set method} \cite{OshSet88} or variations of it, e.g., \cite{BurOsh05,ChaVes01,DeCLeiTai09,TaiCha04,TaiLi06}). This approach has been suggested for qPAT (using the radiative transfer model) in \cite{DeCTraSej15}, however, it was only tested using multiple measurements. In this article, we use an \emph{Ambrosio-Tortorelli} approximation of a \emph{Mumford-Shah-like} functional (which was first suggested for \emph{electrical impedance tomography} in \cite{RonSan01}). The main advantage of this approximation is that we can utilize incomplete jump information (obtained, for instance, from jumps of the data or other means) to initialize the minimization procedure (see Section \ref{sec:mumford_shah_minimization}). Numerically, leads to faster convergence (and in some cases improved minimizers). Additionally, the number of segments does not have to be known in advance (in contrast to \emph{multiple level set methods}) and the minimization with respect to the jump indicator functions is a simple elliptic problem. 

We want to minimize the Mumford-Shah-like functional 
\begin{equation}
\label{eq:qpat_ms_funct}
 \begin{aligned}
	\F(\mu,D,K_\mu,K_D) &= \frac{1}{2} \norm{\E(\mu,D)-\E^\delta}_{L^2(\Omega)}^2 \\
	&+ \frac{\alpha_\mu}{2} \int_{\Omega \setminus K_\mu} |\nabla \mu|^2 \,dx  + \frac{\alpha_D}{2} \int_{\Omega \setminus K_D} |\nabla D|^2 \,dx \\ 
	&+ \beta_\mu H^{N-1}(K_\mu) + \beta_D H^{N-1}(K_D),
 \end{aligned}	
\end{equation}
where $H^{N-1}$ is the $(N-1)$-dimensional Hausdorff-measure and $\E(\mu,D)$ the operator that maps the (unknown) parameters $\mu,D$ to the measurements $\E$ satisfying \eqref{eq:def_E},\eqref{eq:diff_eq_dirichlet}. The minimum is taken over all $\mu,D$ in suitable (that is, point-wise bounded from below and above) subsets of $W^{1,2}(\Omega \setminus K_\mu), W^{1,2}(\Omega \setminus K_D)$ and all closed sets $K_\mu,K_D \subset \Omega$ (the jump sets of the coefficients $\mu,D$). 

Functionals of this type, first introduced by Mumford and Shah for image denoising and segmentation in \cite{MumSha89}, have been applied for a wide range of inverse problems, see, e.g., \cite{EseShe02,Kla11,KreRie14,RamRin07,RamRin10,RonSan01}). The basic idea behind the functional is as follows. The \emph{discrepancy term} 
\begin{equation*}
\frac{1}{2} \norm{\E(\mu,D)-\E^\delta}_{L^2(\Omega)}^2
\end{equation*} 
forces minimizing coefficients $\hat\mu,\hat D$ to be close (in $L^2$-sense) to a solution of the inverse problem $\E(\mu,D)=\E^\delta$ (of which there are infinitely many), while the \emph{regularization terms} 
\begin{equation*}
 \frac{\alpha_\mu}{2} \int_{\Omega \setminus K_\mu} |\nabla \mu|^2 \,dx + \frac{\alpha_D}{2} \int_{\Omega \setminus K_D} |\nabla D|^2 \,dx + \beta_\mu H^{N-1}(K_\mu) + \beta_D H^{N-1}(K_D)
\end{equation*}
force both the variation of $\hat\mu,\hat D$ (outside their jump sets $\hat K_\mu,\hat K_D$) as well as the hyper-surface area of $\hat K_\mu,\hat K_D$ to be small. The regularization parameters $\alpha_\mu,\alpha_D$ control the amount of continuous variation the parameters may have, whereas $\beta_\mu,\beta_D$ control the complexity of the coefficient's jump sets $K_\mu,K_D$. 

Note that while in general, minimizers of this functional can have some continuous variation, they have to be close to piecewise constant for large values of $\alpha_\mu,\alpha_D$. In fact, the Ambrosio-Tortorelli approximation of the functional \eqref{eq:qpat_ms_funct}, which will be introduced in Section \ref{sec:mumford_shah_approximation}, can also be used for piecewise constant regularization (in the limit $\alpha_\mu,\alpha_D \to \infty$, see Remark \ref{rem:piecewise_constant_limit}).

\subsection{Existence of minimizers}
\label{sec:mumford_shah_existence}

Existence of minimizers of functionals like \eqref{eq:qpat_ms_funct} (which are, in general, not unique) was first established in \cite{DeGCarLea89} for image segmentation and in \cite{RonSan01} for regularization of non-linear operator equations. 

To ensure that $\E(\mu,D)$ is well-defined and that $\F$ has a minimizer, point-wise bounds have to be enforced (see \cite{JiaMaaPag14}), so we have to restrict the coefficients to $X_a^b(\Omega) = \{ f \in L^\infty(\Omega)\colon a \leq f \leq b \text{ a.e. in }\Omega\}$ for some $0 < a,b < \infty$ a priori. We also require the following Lemma, the proof follows the ideas in \cite{EggSchl10}. 

\begin{lemma}
\label{lem:continuity_L2_E}
The non-linear measurement operator 
\begin{equation*}
\E\colon X_a^b(\Omega)^2 \subset L^2(\Omega)^2 \to L^2(\Omega), \ \E(\mu,D) = \mu \,{u}(\mu,D),  
\end{equation*}
where ${u}(\mu,D)$ solves \eqref{eq:diff_eq_dirichlet}, is continuous.
\end{lemma}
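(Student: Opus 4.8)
The plan is to factor $\E(\mu,D) = \mu\, u(\mu,D)$ through the solution map $(\mu,D)\mapsto u(\mu,D)$ and to show that this map is continuous from $X_a^b(\Omega)^2$ (with the $L^2$ topology) into $H^1(\Omega)$; continuity of the subsequent multiplication by $\mu$ into $L^2(\Omega)$ then follows easily. Since $L^2(\Omega)^2$ is metrizable, it suffices to prove sequential continuity. First I would reduce the inhomogeneous Dirichlet problem \eqref{eq:diff_eq_dirichlet} to a homogeneous one by fixing an extension $G \in H^1(\Omega)$ with $G|_{\partial\Omega}=g$ (available since $g$ is continuous and $\Omega$ is Lipschitz) and writing $u = w+G$ with $w\in H_0^1(\Omega)$. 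The weak form reads $a_{\mu,D}(w,\phi) = -a_{\mu,D}(G,\phi)$ for all $\phi\in H_0^1(\Omega)$, where $a_{\mu,D}(v,\phi)=\int_\Omega D\,\nabla v\cdot\nabla\phi + \mu\,v\,\phi\,dx$. Because $a\le\mu,D\le b$ a.e., this bilinear form is coercive with constant $a$ and bounded with constant $b$, \emph{uniformly} over $X_a^b(\Omega)^2$, so Lax--Milgram yields a unique $w$ with $\norm{w}_{H^1(\Omega)}\le C\norm{G}_{H^1(\Omega)}$. This makes $u(\mu,D)$ well defined, and since $\mu\in L^\infty(\Omega)$ and $u\in L^2(\Omega)$, it guarantees $\E(\mu,D)=\mu\,u\in L^2(\Omega)$.

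Next, for a sequence $(\mu_n,D_n)\to(\mu,D)$ in $L^2(\Omega)^2$ with all entries in $X_a^b(\Omega)$, set $u_n=u(\mu_n,D_n)$, $u=u(\mu,D)$ and $v_n=u_n-u\in H_0^1(\Omega)$. Both solutions satisfy the homogeneous weak identity $\int_\Omega D\,\nabla u\cdot\nabla\phi+\mu\,u\,\phi\,dx=0$ (with the respective coefficients) for all $\phi\in H_0^1(\Omega)$; subtracting these identities and regrouping the coefficient differences gives, for all $\phi\in H_0^1(\Omega)$,
$$a_{\mu_n,D_n}(v_n,\phi) = -\int_\Omega \big[(D_n-D)\,\nabla u\cdot\nabla\phi + (\mu_n-\mu)\,u\,\phi\big]\,dx.$$
Testing with $\phi=v_n$, using coercivity on the left and Cauchy--Schwarz on the right, yields
$$\norm{v_n}_{H^1(\Omega)} \le \frac{1}{a}\Big(\norm{(D_n-D)\,\nabla u}_{L^2(\Omega)} + \norm{(\mu_n-\mu)\,u}_{L^2(\Omega)}\Big),$$
so the whole problem reduces to showing that this right-hand side tends to zero.

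The main obstacle is exactly this last limit: the coefficients converge only in $L^2(\Omega)$, not uniformly, while the ``weights'' $\nabla u$ and $u$ are merely square-integrable, so the products resist a naive H\"older estimate. The key is that membership in $X_a^b(\Omega)$ supplies the uniform bound $|D_n-D|,\,|\mu_n-\mu|\le b-a$ a.e. Writing $\norm{(D_n-D)\,\nabla u}_{L^2(\Omega)}^2=\int_\Omega |D_n-D|^2|\nabla u|^2\,dx$, the integrand is dominated by $(b-a)^2|\nabla u|^2\in L^1(\Omega)$, and $L^2$-convergence of $D_n$ forces convergence in measure, hence a.e.\ convergence to zero along any subsequence. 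Dominated convergence then gives the limit along that subsequence, and the subsequence principle (if every subsequence has a further subsequence tending to the same limit, the full sequence does) upgrades this to convergence of the entire sequence; the same Vitali-type argument handles the $\mu$-term. Consequently $\norm{v_n}_{H^1(\Omega)}\to 0$, i.e.\ $u_n\to u$ in $H^1(\Omega)$.

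Finally, I would conclude by splitting $\E(\mu_n,D_n)-\E(\mu,D)=\mu_n(u_n-u)+(\mu_n-\mu)u$, bounding the first summand by $b\,\norm{u_n-u}_{L^2(\Omega)}\to 0$ (using $\mu_n\le b$) and the second by the same dominated-convergence argument as above. This gives $\E(\mu_n,D_n)\to\E(\mu,D)$ in $L^2(\Omega)$, establishing sequential --- and hence full --- continuity of $\E$.
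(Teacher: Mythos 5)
Your argument is correct, but it takes a genuinely different route from the paper's. The paper proceeds by weak compactness: a uniform energy bound gives a subsequence $u_k \rightharpoonup \overline{u}$ in $W^{1,2}(\Omega)$; the weak limit is identified as $u$ by passing to the limit in the weak formulation tested against $v \in W^{1,\infty}(\Omega)$ (the $L^\infty$ bound on $\nabla v$ is exactly what lets the coefficient differences, which converge only in $L^2$, be handled by Cauchy--Schwarz), followed by density of $W^{1,\infty}(\Omega)$ in $W^{1,2}(\Omega)$ and the subsequence principle; strong $L^2$ convergence of $u_n$ then comes from compact embedding, and the final product estimate uses $u \in L^\infty(\Omega)$ via the maximum principle. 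You instead test the \emph{difference} equation with $v_n = u_n - u$ itself and exploit the uniform coercivity constant $a$ of the forms $a_{\mu_n,D_n}$, obtaining the quantitative stability bound $\norm{v_n}_{H^1(\Omega)} \le \tfrac{1}{a}\bigl(\norm{(D_n-D)\nabla u}_{L^2(\Omega)} + \norm{(\mu_n-\mu)u}_{L^2(\Omega)}\bigr)$, whose right-hand side you kill by dominated convergence (convergence in measure plus the uniform bound $|D_n - D| \le b-a$ supplied by $X_a^b(\Omega)$) together with the subsequence principle. This buys more with less: you get strong $W^{1,2}$ convergence of $u_n$ rather than weak convergence plus Rellich, and you need neither the density argument nor the maximum principle, since your final step replaces the paper's bound $\norm{\mu_n-\mu}_{L^2(\Omega)}\norm{u}_{L^\infty(\Omega)}$ by another dominated-convergence argument requiring only $u \in L^2(\Omega)$. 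One caveat: your parenthetical claim that an extension $G \in H^1(\Omega)$ of $g$ exists ``since $g$ is continuous and $\Omega$ is Lipschitz'' is not accurate as stated---continuity alone does not place $g$ in the trace space $H^{1/2}(\partial\Omega)$---but since the lemma presupposes that weak solutions of \eqref{eq:diff_eq_dirichlet} exist, this is a presupposition of the statement rather than a gap in your argument (the paper's own energy estimate in terms of $\norm{g}_{L^2(\partial\Omega)}$ glosses over the same point).
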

\begin{proof}
Let $(\mu_n,D_n) \to (\mu,D)$ in $X_a^b(\Omega)^2 \subset L^2(\Omega)^2$. Furthermore, let ${u}_n := {u}(\mu_n,D_n)$ and ${u} := {u}(\mu,D)$ be solutions of \eqref{eq:diff_eq_dirichlet} corresponding to the given coefficients.

By the usual energy estimates for \eqref{eq:diff_eq_dirichlet} (cf. \cite[Chapter 6, Theorem 2]{Eva98}) we have
\begin{equation*}
\norm{{u}_n}_{W^{1,2}(\Omega)} \leq C(a,b,\Omega) \norm{g}_{L^2(\partial\Omega)}.
\end{equation*}
Hence, there exists a (re-labeled) subsequence $({u}_k)_{k \in \mathbb{N}}$ with ${u}_k \rightharpoonup \overline{u}$ weakly in $W^{1,2}(\Omega)$ for some $\overline{u} \in W^{1,2}(\Omega)$. From the weak form of \eqref{eq:diff_eq_dirichlet} we get for all $v \in W^{1,\infty}(\Omega)$ and $k \in \mathbb{N}$
\begin{equation*}
\begin{aligned}
	&\iprod{D \nabla( {u}_k-{u}), \nabla v}_{L^2(\Omega)^2} + \iprod{\mu({u}_k-{u}), v}_{L^2(\Omega)}  \\	
	&\hspace{0.2\linewidth} = \iprod{(D-D_k) \nabla {u}_k, \nabla v}_{L^2(\Omega)^2} + \iprod{(\mu-\mu_k) {u}_k, v}_{L^2(\Omega)} \\
	&\hspace{0.2\linewidth} \leq C(\norm{D_k-D}_{L^2(\Omega)} + \norm{\mu_k-\mu}_{L^2(\Omega)}) \norm{g}_{L^2(\partial\Omega)} \norm{v}_{W^{1,\infty}(\Omega)}.
\end{aligned}	
\end{equation*}
Taking $k \to \infty$ and using the fact that the left side acts as a bounded linear functional on ${u}_k \in W^{1,2}(\Omega)$, we obtain for all $v \in W^{1,\infty}(\Omega)$
\begin{equation*}
 	\iprod{D \nabla(\overline{u}-{u}), \nabla v}_{L^2(\Omega)^2} + \iprod{\mu(\overline{u}-{u}), v}_{L^2(\Omega)} = 0.
\end{equation*}
By density of $W^{1,\infty}(\Omega) \subset W^{1,2}(\Omega)$, this implies $\overline{u}={u}$. Since the same argument also holds for every subsequence of the original sequence $({u}_n)_{n \in \mathbb{N}}$, we get ${u}_n \rightharpoonup {u}$ weakly in  $W^{1,2}(\Omega)$ and thus ${u}_n \to {u}$ strongly in $L^2(\Omega)$ . 

Finally, since ${u} \in L^\infty(\Omega)$ by the maximum principle, continuity of $\E$ follows from
\begin{equation*}
	\norm{\mu_n {u}_n - \mu {u}}_{L^2(\Omega)} \leq \norm{\mu_n-\mu}_{L^2(\Omega)} \norm{{u}}_{L^\infty(\Omega)} + \norm{{u}_n-{u}}_{L^2(\Omega)} \norm{\mu_n}_{L^\infty(\Omega)}.
\end{equation*}
\end{proof}
\begin{remark}
\label{rem:continuity_Lp_E}
Since  $\norm{f}_{L^2}^2 \leq \norm{f}_{L^1(\Omega)} \norm{f}_{L^\infty(\Omega)}$ for all $f \in X_a^b(\Omega)$, 
the operator $\E\colon X_a^b(\Omega)^2 \subset L^1(\Omega)^2 \to L^2(\Omega)$ is also continuous.
\end{remark}

Following \cite{Amb90,RonSan01}, we show the existence of minimizers of a weak form of $\F$ defined for coefficients $\mu,D$ in the space $\SBV(\Omega)$, the \emph{special functions of bounded variation}, which may have jump discontinuities.  For a quick introduction, see Appendix \ref{sec:sbv_introduction}.

We obtain the functional $\overline{\F}\colon X_a^b(\Omega)^2 \cap \SBV(\Omega)^2 \to \mathbb{R}$,  
\begin{equation}
\label{eq:qpat_ms_funct_weak}
\begin{aligned}
	\overline{\F}(\mu,D) &= \frac{1}{2} \norm{\E(\mu,D)-\E^\delta}_{L^2(\Omega)}^2 + \frac{\alpha_\mu}{2} \int_{\Omega} |\nabla \mu|^2 \,dx  \\ 
	 &+ \frac{\alpha_D}{2} \int_{\Omega} |\nabla D|^2 \,dx + \beta_\mu \,H^{N-1}[S(\mu)] + \beta_D \,H^{N-1}[S(D)],
\end{aligned}
\end{equation}
where $S(f)$ denotes the \emph{approximate discontinuity set} of a Lebesgue-measurable function $f$ and $\nabla \mu, \nabla D$ the density of the absolutely continuous part (with respect to the Lebesgue measure) of the respective distributional gradients (see Appendix \ref{sec:sbv_introduction}).

In this setting, the existence of minimizers can be established using the direct method and the $\SBV$ compactness theorem. 
\begin{proposition}
\label{prop:existence_minimizers_ms}
The weak Mumford-Shah-like functional $\overline{\F}$ has at least one minimizer $(\hat \mu, \hat D)$ in $X_a^b(\Omega)^2 \cap \SBV(\Omega)^2$.
\end{proposition}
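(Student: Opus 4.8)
The plan is to apply the direct method of the calculus of variations, using the $\SBV$ compactness theorem as the central tool. First I would take a minimizing sequence $(\mu_n, D_n)_{n \in \mathbb{N}}$ in $X_a^b(\Omega)^2 \cap \SBV(\Omega)^2$ for $\overline{\F}$. Since $\overline{\F}$ is bounded below by zero and we are minimizing, the values $\overline{\F}(\mu_n, D_n)$ are bounded above by some constant $C$. This immediately gives uniform bounds on each of the non-negative terms in the functional: the $L^2$-norms of the approximate gradients $\nabla \mu_n, \nabla D_n$ are bounded, and the Hausdorff measures $H^{N-1}[S(\mu_n)]$, $H^{N-1}[S(D_n)]$ are bounded. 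In addition, membership in $X_a^b(\Omega)$ gives a uniform $L^\infty$ (hence $L^1$) bound on the sequences themselves.

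Next I would invoke the $\SBV$ compactness theorem (in the form attributed to Ambrosio, cf.\ \cite{Amb90}). The hypotheses needed are precisely the bounds just obtained: the $L^\infty$ bound controls the sequence in $L^1$, the bound on $\int_\Omega |\nabla \mu_n|^2\,dx$ controls the absolutely continuous part of the gradient (which is even stronger than the $L^1$ control the theorem requires), and the bound on $H^{N-1}[S(\mu_n)]$ controls the jump part. The compactness theorem then yields a (relabeled) subsequence and a limit $\hat\mu \in \SBV(\Omega)$ such that $\mu_n \to \hat\mu$ strongly in $L^1(\Omega)$, with $\nabla \mu_n \rightharpoonup \nabla \hat\mu$ weakly in $L^2(\Omega)$ and lower semicontinuity of the jump term, i.e.\ $H^{N-1}[S(\hat\mu)] \leq \liminf_n H^{N-1}[S(\mu_n)]$. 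The same argument applies to $D_n$, producing a limit $\hat D$. The pointwise bounds $a \leq \hat\mu, \hat D \leq b$ pass to the limit under $L^1$ convergence along a further subsequence converging almost everywhere, so $(\hat\mu, \hat D) \in X_a^b(\Omega)^2 \cap \SBV(\Omega)^2$.

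It then remains to verify that $\overline{\F}$ is lower semicontinuous along this subsequence, so that $\overline{\F}(\hat\mu, \hat D) \leq \liminf_n \overline{\F}(\mu_n, D_n) = \inf \overline{\F}$, establishing that $(\hat\mu, \hat D)$ is a minimizer. I would handle the four regularization terms and the discrepancy term separately. For the two Dirichlet-type terms $\int_\Omega |\nabla \mu|^2\,dx$, lower semicontinuity follows from the weak $L^2$ convergence of the gradients and the weak lower semicontinuity of the $L^2$-norm. For the two surface terms, lower semicontinuity of $H^{N-1}[S(\cdot)]$ is exactly the statement provided by the $\SBV$ closure/compactness theorem. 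For the discrepancy term, I would use Lemma \ref{lem:continuity_L2_E} together with Remark \ref{rem:continuity_Lp_E}: since $\mu_n \to \hat\mu$ and $D_n \to \hat D$ in $L^1(\Omega)$ (and they lie in $X_a^b$), continuity of $\E\colon X_a^b(\Omega)^2 \subset L^1(\Omega)^2 \to L^2(\Omega)$ gives $\E(\mu_n, D_n) \to \E(\hat\mu, \hat D)$ in $L^2(\Omega)$, whence the discrepancy term converges (it is even continuous, not merely lower semicontinuous). Adding the five contributions and using that $\liminf$ is superadditive yields the desired inequality.

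The main obstacle I anticipate is the correct invocation and bookkeeping of the $\SBV$ compactness theorem: one must check that the boundedness of the functional really supplies all the hypotheses the theorem needs (the $L^\infty$ bound is doing essential work here, since $\SBV$ compactness typically requires an a priori $L^\infty$ or at least $L^1$ bound that is not automatic from the energy alone, which is exactly why the restriction to $X_a^b(\Omega)$ was imposed). A secondary subtlety is that the two coefficients $\mu$ and $D$ are coupled only through the discrepancy term, so extracting subsequences must be done so that both $\mu_n$ and $D_n$ converge along a common subsequence; this is routine since one simply extracts twice. Everything else—the lower semicontinuity of the individual convex and surface terms—is standard once the compactness step is in place.
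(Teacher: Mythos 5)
Your proposal is correct and follows essentially the same route as the paper's own proof: the direct method, with uniform bounds from the functional and the $X_a^b(\Omega)$ constraint feeding the $\SBV$ compactness theorem (applied successively to $\mu_n$ and $D_n$), closedness of $X_a^b(\Omega)$ under $L^1$-type convergence, weak lower semicontinuity of the gradient and surface terms, and continuity of $\E$ on $X_a^b(\Omega)^2 \subset L^1(\Omega)^2$ (Lemma \ref{lem:continuity_L2_E} with Remark \ref{rem:continuity_Lp_E}) for the discrepancy term. The only cosmetic difference is bookkeeping: the paper passes the pointwise bounds to the limit by noting $X_a^b(\Omega)^2$ is closed in $L^1_{\text{loc}}(\Omega)^2$, whereas you extract a further a.e.-convergent subsequence, which amounts to the same thing.
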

\begin{proof}
Let $(\mu_n,D_n) \in X_a^b(\Omega)^2 \cap \SBV(\Omega)^2$ be a minimizing sequence of the functional \eqref{eq:qpat_ms_funct_weak}. Clearly, we have for all $n \in \mathbb{N}$
\begin{equation*}
\begin{aligned}
	&\norm{\mu_n}_{L^\infty(\Omega)} + \int_\Omega |\nabla \mu_n|^2 \,dx + H^{N-1}[S(\mu)] \leq C < \infty \\
	&\norm{\D_n}_{L^\infty(\Omega)} + \int_\Omega |\nabla \mu_n|^2 \,dx + H^{N-1}[S(D)] \leq C < \infty.
\end{aligned}	
\end{equation*}
By the $\SBV$-compactness theorem (see Section \ref{sec:sbv_introduction}), and using Lemma \ref{lem:continuity_L2_E}, first applied to $\mu_n$, then to the obtained subsequence of $D_n$, there exist $(\hat \mu,\hat D) \in \SBV(\Omega)^2$  and a subsequence $(\mu_k,D_k)_{k \in \mathbb{N}}$ (re-labeled) with
\begin{equation*}
\begin{aligned}
	(\mu_k,D_k) &\to (\hat\mu,\hat D) \text{ strongly in } L^1_{\text{loc}}(\Omega)^2 \\
	(\nabla \mu_k,\nabla D_k) &\rightharpoonup (\nabla \hat\mu,\nabla \hat D)  \text{ weakly in } L^2(\Omega;\mathbb{R}^N)^2 \\
	H^{N-1}[S(\hat\mu)] &\leq \liminf_{k \to \infty} H^{N-1}[S(\mu_k)] \\
	H^{N-1}[S(\hat D)] &\leq \liminf_{k \to \infty} H^{N-1}[S(D_k)]. 
\end{aligned}	
\end{equation*}
We also have $(\hat\mu,\hat D) \in X_a^b(\Omega)^2$ since $X_a^b(\Omega)^2$ is closed in $L^1_{\text{loc}}(\Omega)^2$. Furthermore, it is a minimizer of $\overline\F$ since the $L^2(\Omega)$-norm is weakly lower semi-continuous and $\E\colon X_a^b(\Omega)^2 \subset L^1(\Omega)^2 \to L^2(\Omega)$ is continuous.
\end{proof}

\begin{remark} 
If the minimizer $(\hat\mu,\hat D)$ has \emph{essentially closed} jump sets $S(\hat\mu),S(\hat D)$ (that is, taking the closure of the jump sets does not increase their $H^{N-1}$-measure) the quadruple $\left( \hat\mu,\hat D,\overline{S(\hat\mu)},\overline{S(\hat D)} \right)$ also minimizes the strong Mumford-Shah functional \eqref{eq:qpat_ms_funct} among all $\mu \in X_a^b(\Omega) \cap W^{1,2}(\Omega \setminus K_\mu)$,  $D \in X_a^b(\Omega) \cap W^{1,2}(\Omega \setminus K_D)$ and $K_\mu, K_D$ closed. While essential closedness of jump sets of Mumford-Shah-minimizers can be established under certain conditions on the discrepancy term \cite{DeGCarLea89,JiaMaaPag14,RonSan01}, this is out of the scope of this article. 
\end{remark}

\subsection{Approximation}
\label{sec:mumford_shah_approximation}

Using the approach of Ambrosio and Tortorelli (cf. \cite{AmbTor92}), one can obtain functionals 
\begin{equation*}
	\F_\epsilon\colon X_a^b(\Omega)^2 \cap W^{1,2}(\Omega)^2 \times X_0^1(\Omega)^2 \cap W^{1,2}(\Omega)^2 \to \mathbb{R}
\end{equation*}
that approximate the functional $\overline\F$ from \eqref{eq:qpat_ms_funct_weak} and are easier to minimize: 
\begin{equation}
\label{eq:qpat_ms_at_funct}
 \begin{aligned}
	\F_\epsilon(\mu,D&,v_\mu,v_D) = \frac{1}{2} \norm{\E(\mu,D)-\E^\delta}_{L^2(\Omega)}^2 \\ 
	&+ \frac{\alpha_\mu}{2} \int_\Omega (v_\mu^2+\zeta_\mu) |\nabla \mu|^2 \,dx + \frac{\alpha_D}{2} \int_{\Omega} (v_D^2+\zeta_D) |\nabla D|^2 \,dx \\
	&+ \beta_\mu \int_\Omega \left(\epsilon |\nabla v_\mu|^2 + \frac{(v_\mu-1)^2}{4\epsilon} \right) dx \\
	&+ \beta_D \int_\Omega \left(\epsilon |\nabla v_D|^2 +  \frac{(v_D-1)^2}{4\epsilon} \right) dx.
 \end{aligned}	
\end{equation}
It is well-known that, for all $\alpha,\beta > 0$ and $\zeta=o(\epsilon)$,
\begin{equation*}
	\alpha \int_{\Omega} (v^2+\zeta) |\nabla f|^2 + \beta \left(\epsilon |\nabla v|^2 + \frac{(v-1)^2}{4\epsilon} \right) dx \underset{\epsilon \to 0}{\longrightarrow} \alpha \int_{\Omega} |\nabla f|^2 \,dx + \beta \,H^{N-1}[S(f)]
\end{equation*}
in the sense of $\Gamma$-convergence in $L^1(\Omega)$ (formally, the latter has to be extended to an additional variable, see \cite{AmbTor92,RonSan01}). Since the data term is continuous in $L^1(\Omega)$ and $\Gamma$-convergence is stable under continuous perturbations, we thus get $\glim_{\epsilon \to 0} \F_\epsilon = \overline\F$ and therefore $L^1(\Omega)$-convergence of (a subsequence of) $\F_\epsilon$-minimizers (which can be shown to lie in a compact set \cite{AmbTor92}) to $\overline\F$-minimizers. 

The existence of a minimizer  of \eqref{eq:qpat_ms_at_funct} can be established using the direct method.
\begin{proposition}
\label{prop:existence_minimizers_at}
The functional $\F_\epsilon$ has at least one minimizer $(\hat \mu, \hat D, \hat{v}_\mu, \hat{v}_D)$ in $X_a^b(\Omega)^2 \cap W^{1,2}(\Omega)^2 \times X_0^1(\Omega)^2 \cap W^{1,2}(\Omega)^2$.
\end{proposition}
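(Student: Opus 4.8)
The plan is to apply the direct method of the calculus of variations with $\epsilon>0$ (and $\zeta_\mu,\zeta_D>0$) fixed. Let $(\mu_n,D_n,v_{\mu,n},v_{D,n})$ be a minimizing sequence in $X_a^b(\Omega)^2 \cap W^{1,2}(\Omega)^2 \times X_0^1(\Omega)^2 \cap W^{1,2}(\Omega)^2$. Since every term of $\F_\epsilon$ is nonnegative, $\F_\epsilon$ is bounded along the sequence, say by $C$. The pointwise constraints give $\mu_n,D_n \in [a,b]$ and $v_{\mu,n},v_{D,n} \in [0,1]$ a.e., hence uniform $L^\infty$ and $L^2$ bounds. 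Gradient bounds come from the penalties: because $v_{\mu,n}^2+\zeta_\mu \geq \zeta_\mu > 0$, the inequality $\frac{\alpha_\mu}{2}\int_\Omega (v_{\mu,n}^2+\zeta_\mu)|\nabla\mu_n|^2\,dx \leq C$ forces $\int_\Omega|\nabla\mu_n|^2\,dx \leq 2C/(\alpha_\mu\zeta_\mu)$, and symmetrically for $D_n$; the Ambrosio--Tortorelli gradient terms bound $\int_\Omega|\nabla v_{\mu,n}|^2\,dx$ and $\int_\Omega|\nabla v_{D,n}|^2\,dx$. Thus all four sequences are bounded in $W^{1,2}(\Omega)$.

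By reflexivity of $W^{1,2}(\Omega)$ together with the compact embedding $W^{1,2}(\Omega)\hookrightarrow L^2(\Omega)$ on the bounded Lipschitz domain $\Omega$, I pass to a (relabeled) subsequence along which $\mu_n,D_n,v_{\mu,n},v_{D,n}$ converge weakly in $W^{1,2}(\Omega)$ and strongly in $L^2(\Omega)$ to limits $\hat\mu,\hat D,\hat v_\mu,\hat v_D$, and, after a further refinement, pointwise a.e. The a.e. limits inherit the pointwise bounds, so $(\hat\mu,\hat D)\in X_a^b(\Omega)^2$ and $(\hat v_\mu,\hat v_D)\in X_0^1(\Omega)^2$, while the weak $W^{1,2}$ limits lie in $W^{1,2}(\Omega)$; the candidate is therefore admissible. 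It then suffices to verify $\F_\epsilon(\hat\mu,\hat D,\hat v_\mu,\hat v_D)\leq\liminf_n\F_\epsilon(\mu_n,D_n,v_{\mu,n},v_{D,n})$ term by term. The discrepancy term is in fact continuous: since $\mu_n\to\hat\mu$ and $D_n\to\hat D$ strongly in $L^2(\Omega)$ within $X_a^b(\Omega)$, Lemma \ref{lem:continuity_L2_E} gives $\E(\mu_n,D_n)\to\E(\hat\mu,\hat D)$ in $L^2(\Omega)$, whence $\norm{\E(\mu_n,D_n)-\E^\delta}_{L^2(\Omega)}^2$ converges. In the penalties, the gradient parts $\epsilon\int_\Omega|\nabla v_{\mu,n}|^2$ and $\epsilon\int_\Omega|\nabla v_{D,n}|^2$ are weakly lower semicontinuous, and the potentials $\int_\Omega(v_{\mu,n}-1)^2$, $\int_\Omega(v_{D,n}-1)^2$ converge by strong $L^2$ convergence.

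The genuinely delicate term, and the main obstacle, is the weighted Dirichlet energy $\int_\Omega(v_{\mu,n}^2+\zeta_\mu)|\nabla\mu_n|^2\,dx$, which couples a strongly convergent weight with a gradient that converges only weakly. I handle this by convexity of $p\mapsto|p|^2$: setting $w_n := v_{\mu,n}^2+\zeta_\mu$ and $\hat w := \hat v_\mu^2+\zeta_\mu$, one has the pointwise subgradient inequality
\begin{equation*}
	w_n|\nabla\mu_n|^2 \geq w_n|\nabla\hat\mu|^2 + 2\,w_n\,\nabla\hat\mu\cdot(\nabla\mu_n-\nabla\hat\mu).
\end{equation*}
Since $0\leq w_n\leq 1+\zeta_\mu$ and $w_n\to\hat w$ a.e., dominated convergence yields $\int_\Omega w_n|\nabla\hat\mu|^2\to\int_\Omega\hat w|\nabla\hat\mu|^2$ and $w_n\nabla\hat\mu\to\hat w\nabla\hat\mu$ strongly in $L^2(\Omega;\mathbb{R}^N)$; pairing the latter against $\nabla\mu_n-\nabla\hat\mu\rightharpoonup 0$ weakly in $L^2$ annihilates the cross term, so $\liminf_n\int_\Omega w_n|\nabla\mu_n|^2\geq\int_\Omega\hat w|\nabla\hat\mu|^2$, and identically for the $D$-term.

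Summing the term-by-term estimates gives $\F_\epsilon(\hat\mu,\hat D,\hat v_\mu,\hat v_D)\leq\liminf_n\F_\epsilon(\mu_n,D_n,v_{\mu,n},v_{D,n})=\inf\F_\epsilon$, so $(\hat\mu,\hat D,\hat v_\mu,\hat v_D)$ is a minimizer. I expect the bookkeeping in the first two paragraphs to be routine; the one place where care is needed is the weighted term above, where the convexity (strong-times-weak) argument is exactly what rescues lower semicontinuity despite the product structure.
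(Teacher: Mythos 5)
Your proof is correct and follows the same overall strategy as the paper's: the direct method, with coercivity coming from the offsets $\zeta_\mu,\zeta_D>0$ (this is exactly the paper's claim that $\F_\epsilon$ is coercive with respect to the semi-norm $|\mu|_{W^{1,2}} + |D|_{W^{1,2}} + \norm{v_\mu}_{W^{1,2}} + \norm{v_D}_{W^{1,2}}$, spelled out), weak $W^{1,2}$ plus strong $L^2$ compactness of a minimizing sequence, closedness of the constraint sets $X_a^b(\Omega)$ and $X_0^1(\Omega)$ in the limit, continuity of the discrepancy term via Lemma \ref{lem:continuity_L2_E}, and lower semicontinuity of the regularization terms. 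The one genuine point of divergence is the treatment of the delicate $\alpha$-terms $\int_\Omega (v^2+\zeta)|\nabla\mu|^2\,dx$: the paper dispatches them by citing a general weak lower semicontinuity theorem for integrands convex in the gradient variable (\cite[Theorem 1.13]{Dac08}), whereas you prove precisely this instance by hand, via the subgradient inequality $w_n|\nabla\mu_n|^2 \geq w_n|\nabla\hat\mu|^2 + 2\,w_n\nabla\hat\mu\cdot(\nabla\mu_n-\nabla\hat\mu)$ combined with dominated convergence and the strong-times-weak pairing. The two are mathematically equivalent --- your argument is essentially the proof of the cited theorem specialized to the integrand $(v,\xi)\mapsto(v^2+\zeta)|\xi|^2$ --- but yours is self-contained and makes explicit where the pointwise a.e.\ convergence of $v_{\mu,n}$ (hence the compact embedding) is actually consumed, which the citation hides; the paper's version buys brevity. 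I see no gaps in your argument.
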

\begin{proof}
$F_\epsilon$ is coercive with respect to the semi-norm
\begin{equation*}
	|\mu|_{W^{1,2}(\Omega)} + |D|_{W^{1,2}(\Omega)} + \norm{v_\mu}_{W^{1,2}(\Omega)} + \norm{v_D}_{W^{1,2}(\Omega)},
\end{equation*}
which, combined with the restriction $(\mu,D) \in X_a^b(\Omega)^2$, shows that a minimizing sequence is bounded in $W^{1,2}(\Omega)^4$, so it contains a  subsequence that converges weakly to $(\hat \mu, \hat D, \hat{v}_\mu, \hat{v}_D)$ in $W^{1,2}(\Omega)^4$ (and thus strongly in $L^2(\Omega)^4$). Moreover, note that the spaces $X_a^b(\Omega)$ and $X_0^1(\Omega)$ are closed in $L^2(\Omega)$. The Proposition follows since the discrepancy term is continuous in $L^2(\Omega)^2$ and the regularization terms are weakly lower semi-continuous in $W^{1,2}(\Omega)^4$ (cf. \cite[Theorem 1.13]{Dac08} for the $\alpha$-terms). 
\end{proof}

\begin{remark}
\label{rem:piecewise_constant_limit}
If we vary $\alpha_\mu,\alpha_D$ with $\epsilon$ and take 
\begin{equation*}
	\alpha_\mu,\alpha_D \underset{\epsilon \to 0}{\longrightarrow} \infty
\end{equation*}
the minimizers of $\F_\epsilon$ converge to piecewise constant (in $\SBV$-sense, that is, with gradient vanishing almost everywhere) functions $(\hat \mu,\hat D)$  that minimize
\begin{equation*}
		\widetilde{\F}(\mu,D) = \frac{1}{2} \norm{\E(\mu,D)-\E^\delta}_{L^2(\Omega)}^2 + \beta_\mu \,H^{N-1}[S(\mu)] + \beta_D \,H^{N-1}[S(D)]
\end{equation*}
among all piecewise constant $\mu,D$. A proof (for the single variable case) can be found in \cite{AmbTor92}. This explains, in light of Section \ref{sec:recovery_pw_const}, why this type of regularization is useful for qPAT. 
\end{remark}

\subsection{Minimization}
\label{sec:mumford_shah_minimization} 
In this subsection, we proceed formally, i.e., without proving convergence, existence of minimizers of sub-problems and that the necessary derivatives and adjoints exist. For the minimization of \eqref{eq:qpat_ms_at_funct}, we suggest the following alternating directions approach:
\FloatBarrier
\begin{algorithm}
\label{alg:qpat_ms}
~ \\
\vspace{-\baselineskip}
\begin{enumerate}

	\item Choose parameters $a,b,\epsilon,\alpha_\mu,\alpha_D,\beta_\mu,\beta_D > 0$.
	\item Find an initial estimate $\hat K$ of the edge set $J_0(\mu) \cup J_0(D)$ using edge detection.
	\item Initialize with $\mu^0 = D^0 \equiv 1$ and $v^0_\mu = v^0_D = 1-1_{\hat K}$ .
	\item Iterate until convergence:

	\begin{itemize}
		\item[(i)] $(\mu^{n+\frac 1 2},D^{n+\frac 1 2})= \argmin_{\mu,D} \ \F_\epsilon(\mu,D,v^n_\mu,v^n_D)$
		\item[(ii)] $(v^{n+1}_\mu,v^{n+1}_D) = \argmin_{v_\mu,v_D} \ \F_\epsilon(\mu^{n+\frac 1 2},D^{n+\frac 1 2},v_\mu,v_D)$
	\end{itemize}

\end{enumerate}
\end{algorithm} 
\FloatBarrier
We now explain the steps in more detail.

\subsubsection*{Initialization using edge detection}
In our numerical simulations, it became clear that using Proposition \ref{prop:pw_const_prop_1} to obtain a reasonable estimate $\hat K$ of $J_0(\mu) \cup J_0(D)$ leads to faster convergence and better minimizers than simply taking $\hat K = \emptyset$. 

Given the noisy measurements $\E^\delta$, we have to estimate the discontinuities (or edges) of $\E^\delta$, $|\nabla \E^\delta|^2$ and $|\Delta \E^\delta|$. Since the jumps in these functions are of \emph{multiplicative} nature (proportional to the local value of ${u}$ or $\nabla {u} \cdot \nu$), it is advantageous to apply a logarithmic transformation prior to edge detection (to obtain constant contrast), see \cite{NaeSch14}. 

Due to noise, the data has to be smoothed prior to taking derivatives (turning discontinuities into areas with large gradients). We smooth by convolution with a Gaussian, since this approach has the advantage that differentiation and smoothing can be done in one step (by differentiating the low-pass filter instead of the function). We have

\begin{equation*}
\begin{aligned}
	\operatorname{G}_\sigma(x) &= (2\pi)^{-\frac{N}{2}} \sigma^{-N} \exp\left(\frac{-x^2}{2\sigma^2} \right) \\
	(\nabla \operatorname{G}_\sigma)(x) &= (2\pi)^{-\frac{N}{2}} \sigma^{-(N+2)} (-x) \exp\left(\frac{-x^2}{2\sigma^2} \right)  \\
	(\Delta \operatorname{G}_\sigma)(x) &= (2\pi)^{-\frac{N}{2}} \sigma^{-N+2} \left( \frac{|x|^2}{\sigma^2}-N \right) \exp\left(\frac{-x^2}{2\sigma^2} \right).
\end{aligned}
\end{equation*}

Similar to \cite{NaeSch14}, we proceed as follows:
\begin{enumerate}

	\item Detect edges $\hat K_0$ of $f_0:=\log \left( \E^\delta \ast \ 1_{B}\,G_{\sigma_0} \right)|_{\Omega \ominus B}$
	\item Detect edges $\hat K_1$ of $f_1:=\log \left( \left| \left( \E^\delta \ast \ 1_{B} \, \nabla G_{\sigma_1} \right)  \right|^2 \vee \gamma \right))|_{(\Omega \setminus \hat K_0) \ominus B}$
	\item Detect edges $\hat K_2$ of  $f_2:=\log \left( \left| \left( \E^\delta \ast \ 1_{B} \, \Delta G_{\sigma_2} \right) \right| \right)|_{(\Omega \setminus (\hat K_0 \cup \hat K_1)) \ominus B}$	
	\item Take $\hat K = \hat K_0 \cup \hat K_1 \cup \hat K_2$

\end{enumerate}

Here, $1_{B}$ acts as a cut-off function for the filters (e.g., using $B=B^p_\rho$, the $p$-norm ball of radius $\rho$). The operation $A \ominus B = \{ z \in \Omega \mid (B + z) \subset A \}$ denotes set erosion, which is performed to avoid multiple detection of edges (by ensuring that the cut-off filter does not intersect with already detected edges or the outside of the domain). Note, however, that for large $B$ this may lead to parts of edges (close to the domain boundary or already detected edges) not being detected. The scalar $\gamma$ is a minimal value enforced for $|\nabla \E^\delta|^2$ (to avoid creating singularities at zeros).  

The edge detection itself is performed by applying thresholds $\xi_0$,$\xi_1$,$\xi_2$ to the functions $|\nabla f_0|, |\nabla f_1|, |\nabla f_2|$ (taking the super-level sets as edge sets). Note that in contrast to \cite{NaeSch14}, no complete segmentation is necessary and cruder edge estimates suffice, that is, the detected edges don't have to be reduced to thin curves.

\subsubsection*{Iteration}
In step (i), to find, for fixed $v_\mu,v_D \in X_0^1(\Omega) \cap W^{1,2}(\Omega)$, 
\begin{equation*}
	\underset{\mu,D \in X_a^b(\Omega) \cap W^{1,2}(\Omega)}{\argmin} \ \F_\epsilon(\mu,D)
\end{equation*}
we use Gauss-Newton-minimization. That is, in every iteration of an inner loop, we linearly approximate $\E(\mu_k+s_\mu,D_k+s_D) \approx \E(\mu_k,D_k) + \E'(\mu_k,D_k)(s_\mu,s_D)$ and take update steps
\begin{equation}
\label{eq:qpat_ms_gauss_newton}
 \begin{aligned}
	s &= \underset{s_\mu,s_D \in W^{1,2}(\Omega)}{\argmin} \ \frac{1}{2} \norm{\E(\mu_k,D_k)+\E'(\mu_k,D_k)(s_\mu,s_D)-\E^\delta}_{L^2(\Omega)}^2  \\ 
	&+ \frac{\alpha_\mu}{2} \int_{\Omega} (v_\mu^2+\zeta_\mu) |\nabla (\mu_k+s_\mu)|^2 \,dx + \frac{\alpha_D}{2} \int_{\Omega} (v_D^2+\zeta_D) |\nabla (D_k+s_D)|^2 \,dx \\
 \end{aligned}		
\end{equation}
which we then project into $X_a^b(\Omega)^2$, so we update with
\begin{equation*}
	(\mu_{k+1},D_{k+1}) = (\mu_k,D_k) + (s \wedge a) \vee b.
\end{equation*}	
A straightforward calculation shows that a minimizer $s=(s_\mu,s_D)$ of \eqref{eq:qpat_ms_gauss_newton} satisfies the weak form of
\begin{equation}
\label{eq:qpat_ms_gauss_newton_el}
\begin{aligned}
	\E'&(\mu_k,D_k)^*\E'(\mu_k,D_k)(s_\mu,s_D) + \left(\alpha_\mu L_{(v_\mu^2 + \zeta_\mu)} \,s_\mu,\ \alpha_D L_{(v_D^2 + \zeta_D)} \,s_D \right) \\
	&= -\E'(\mu_k,D_k)^*[ \E(\mu_k,D_k)-\E^\delta] - \left(\alpha_\mu L_{(v_\mu^2 + \zeta_\mu)} \,\mu_k,\ \alpha_D L_{(v_D^2 + \zeta_D)} \,D_k \right) \\
	&=: (R_{\mu_k},R_{D_k})
\end{aligned}	
\end{equation}
with homogeneous Neumann boundary conditions for $s_\mu,s_D$ and
\begin{equation*}
	L_\sigma\colon W^{1,2}(\Omega) \to W^{-1,2}(\Omega), \ x \mapsto -\nabla \cdot ( \sigma \nabla x ).
\end{equation*}
The linear operator $\E'(\mu,D)$ and its (formal) adjoint $\E'(\mu,D)^*$ are given by  
\begin{equation}
\label{eq:qpat_derivatives_adjoints}
\begin{aligned}
	\E'(\mu,D)(s_\mu,s_D) &= s_\mu {u} + \mu L_{\mu,D}^{-1}(-s_\mu {u}) + \mu L_{\mu,D}^{-1}[\nabla \cdot ( s_D \nabla {u} )] \\
	\E'(\mu,D)^*\,t &= \left( t {u} - {u} L_{\mu,D}^{-1}(\mu t), -\nabla {u} \cdot \nabla L_{\mu,D}^{-1}(\mu t) \right)
\end{aligned}
\end{equation}
where ${u}={u}(\mu,D)$ and
\begin{equation*}
	L_{\mu,D}\colon W_0^{1,2}(\Omega) \to W^{-1,2}(\Omega), \ x \mapsto -\nabla \cdot ( D \nabla x ) + \mu x.
\end{equation*}
By introducing auxiliary variables $y_1,y_2,y_3$  (and taking $\mu=\mu_k, D=D_k$), the equations \eqref{eq:qpat_ms_gauss_newton_el},\eqref{eq:qpat_derivatives_adjoints} can be re-written as the system of (weak) PDE
\begin{equation}
\label{eq:qpat_ms_system}
\begin{aligned}
	 (s_\mu {u} + \mu y_1 + \mu y_2) {u} - {u} y_3 + \alpha_\mu L_{(v_\mu^2 + \zeta_\mu)} \,s_\mu = R_\mu, \quad \frac{\partial s_\mu|_{\partial\Omega}}{\partial \nu}&=0 \\
	 -\nabla {u} \cdot \nabla y_3  + \alpha_D L_{(v_D^2 + \zeta_D)} \,s_D  =  R_D, \quad \frac{\partial s_D|_{\partial\Omega}}{\partial \nu}&=0 \\
	L_{\mu,D} \, y_3 = \mu (s_\mu {u} + \mu y_1 + \mu y_2), \quad y_3|_{\partial\Omega}&=0 \\
	L_{\mu,D} \, y_2 = \nabla \cdot ( s_D \nabla {u} ), \quad y_2|_{\partial\Omega}&=0 \\
	L_{\mu,D} \, y_1 = -s_\mu {u}, \quad y_1|_{\partial\Omega}&=0 
\end{aligned}
\end{equation}
which is amenable to discretization as a sparse matrix using, e.g., the \emph{finite element method (FEM)}.

In step (ii) of the outer loop we have to find, for fixed $\mu,D \in X_a^b(\Omega) \cap W^{1,2}(\Omega)$,
\begin{equation*}
	\underset{v_\mu,v_D \in X_0^1(\Omega) \cap W^{1,2}(\Omega)}{\argmin} \ \F_\epsilon(v_\mu,v_D).
\end{equation*}

The minimizer $(v_\mu,v_D)$ satisfies the linear equations 
\begin{equation} 
\label{eq:qpat_ms_edges_el}
\begin{aligned}
 \left(-2 \beta \epsilon \Delta +  (\alpha_\mu |\nabla \mu|^2  + \frac{\beta_\mu}{2 \epsilon}) \Id \right) \, v_\mu &= \frac{\beta}{2 \epsilon}, \quad \frac{\partial v_\mu|_{\partial\Omega}}{\partial \nu}=0 \\
  \left(-2 \beta \epsilon \Delta +  (\alpha_D |\nabla D|^2  + \frac{\beta_D}{2 \epsilon}) \Id \right) \, v_D &= \frac{\beta}{2 \epsilon}, \quad \frac{\partial v_D|_{\partial\Omega}}{\partial \nu}=0 \\
\end{aligned}
\end{equation}
which can be solved separately and implemented numerically using FEM.

\section{Implementation and numerical results}
\label{sec:numerical_results}

In this section, the proposed algorithm is tested on two sets of simulated two-dimensional data (with different parameter range and level of detail). We also vary the noise on the data since the reconstruction quality strongly depends on the noise level. 

The data (see Figures \ref{fig:setup_circles} and \ref{fig:setup_rectangles}) was generated in the diffusion model \eqref{eq:diff_eq_dirichlet} using self-written (linear-basis) finite element code in \emph{MATLAB}. For both examples, we took $\Omega=[0,5]^2$ and used a uniform boundary condition $g \equiv 1$. The simulated data were generated on a $(400\times400)$-grid and then down-sampled (by averaging) to $(200\times200)$ to avoid inverse crime.  After that, Gaussian noise with different intensities (standard deviations of $0.5\%$ and $10\%$ of the average signal value $\frac{1}{|\Omega|} \int_\Omega \E(x)\,dx$) was added to the data.

\FloatBarrier

\begin{figure}[ht]
	\centering
		\subfloat[$\mu$]{ \includegraphics[width=0.4\textwidth]{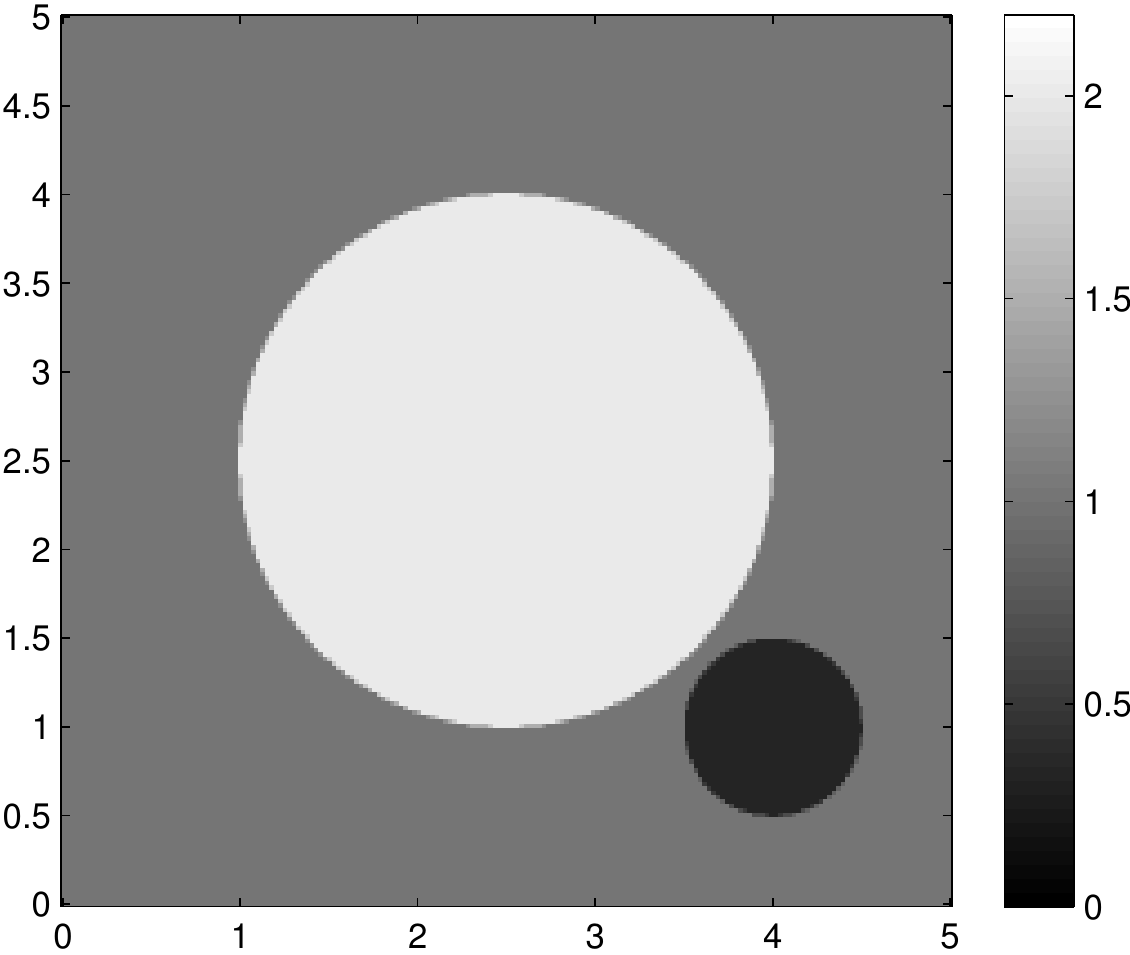} }  \hspace{\baselineskip}
  		\subfloat[$D$]{ \includegraphics[width=0.4\textwidth]{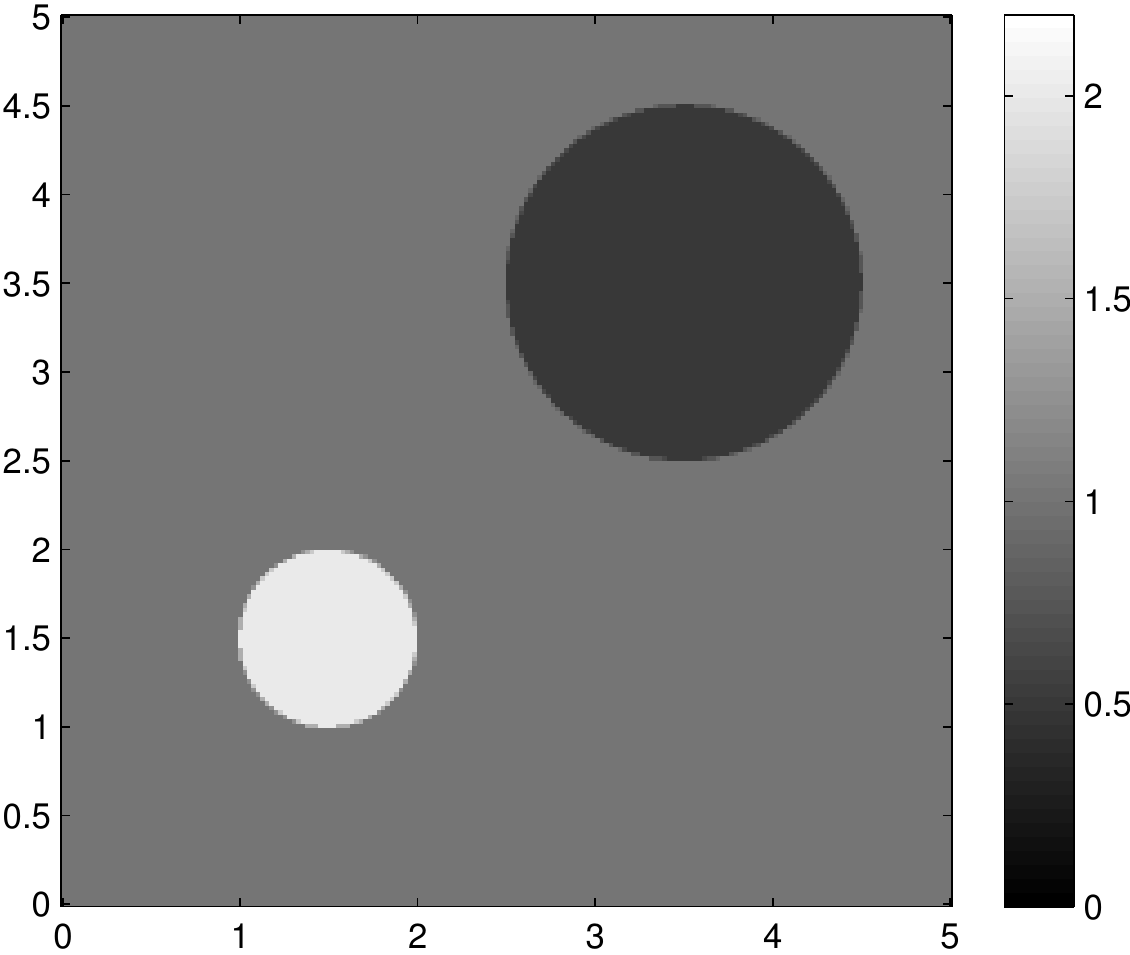} } \\ 
		\subfloat[$\E(\mu,D)$]{ \includegraphics[width=0.4\textwidth]{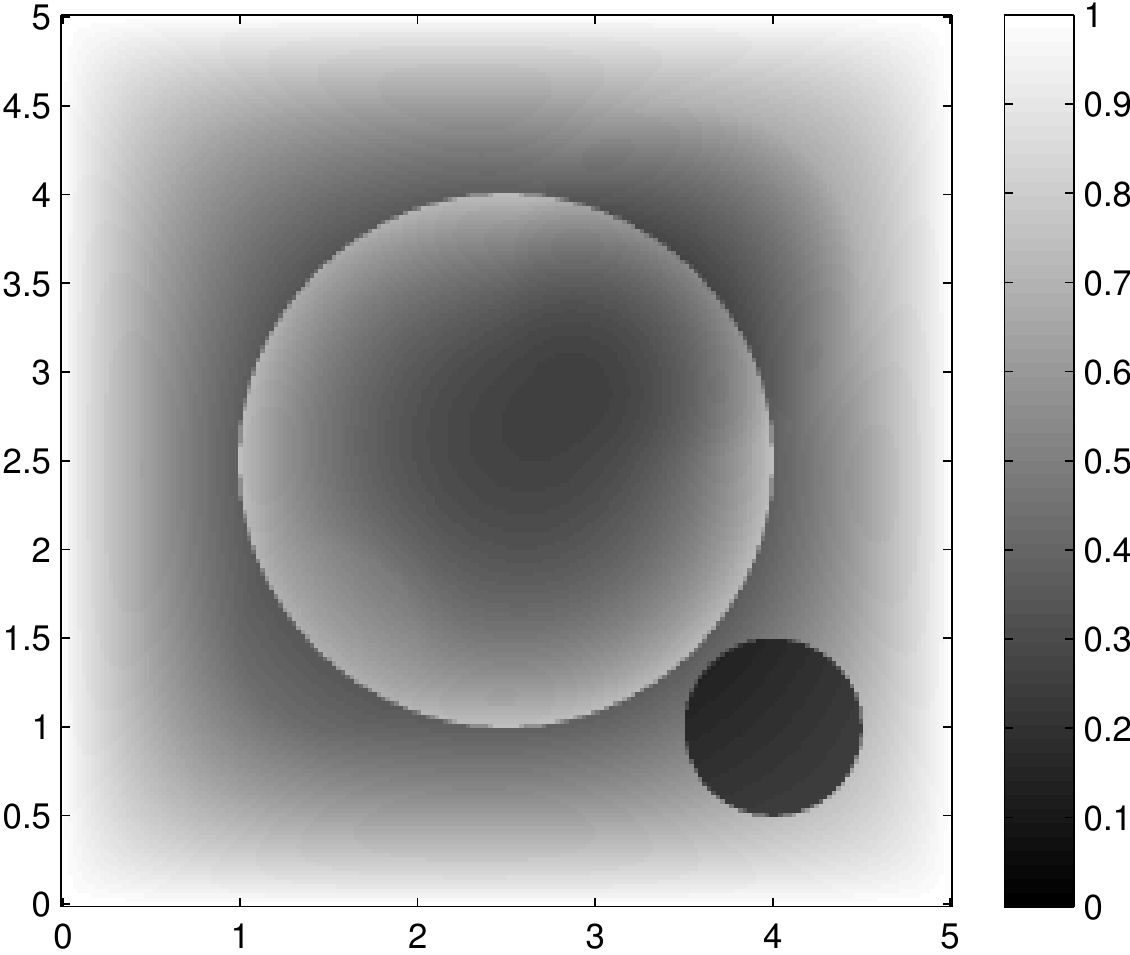} }  \hspace{\baselineskip}
  		\subfloat[$\E(\mu,D)^\delta$ ($10\%$ noise)]{ \includegraphics[width=0.4\textwidth]{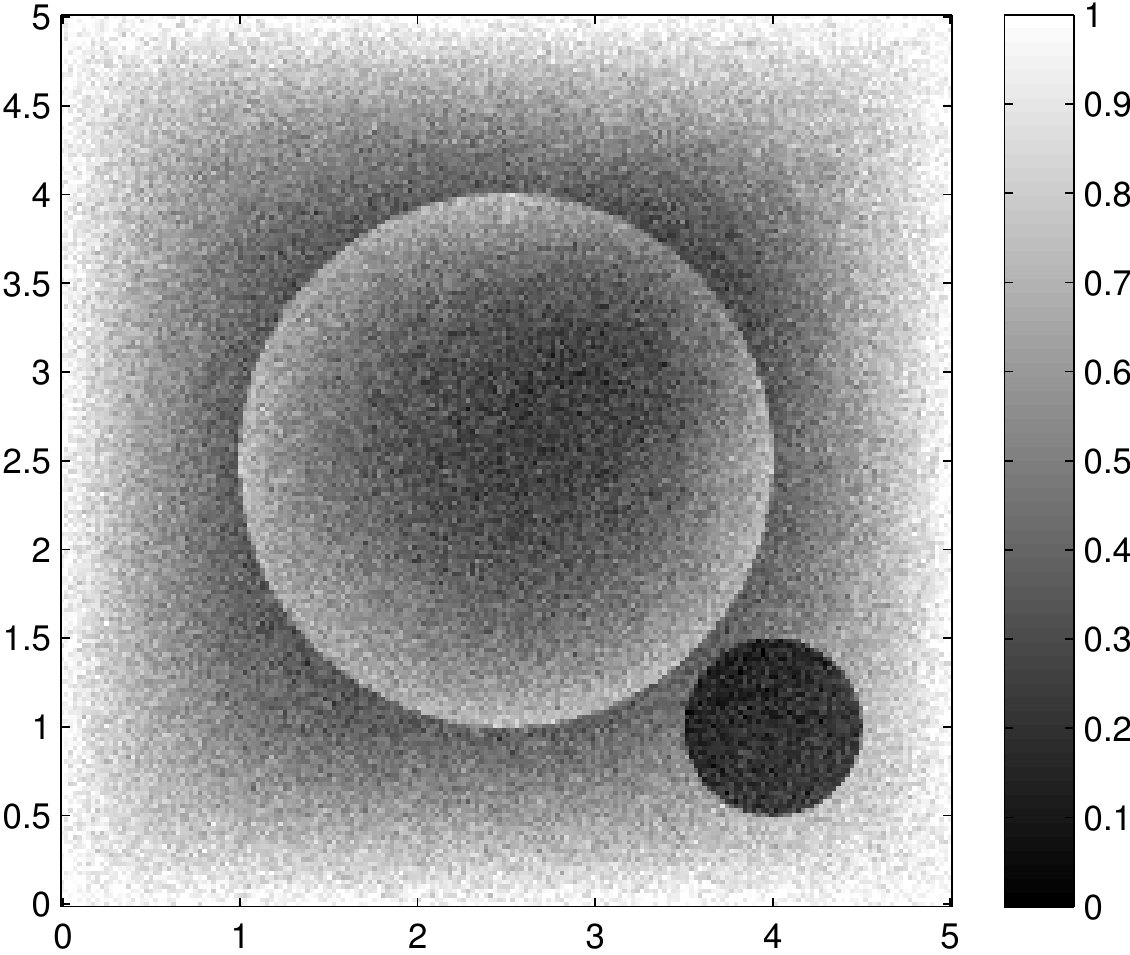} } \\
	\caption[Reconstruction results]
	{ Example A. Parameters $\mu,D$ and FEM-generated data $\E(\mu,D)$.}
	\label{fig:setup_circles}	
\end{figure}

\begin{figure}[ht]
	\centering
		\subfloat[$\mu$]{ \includegraphics[width=0.4\textwidth]{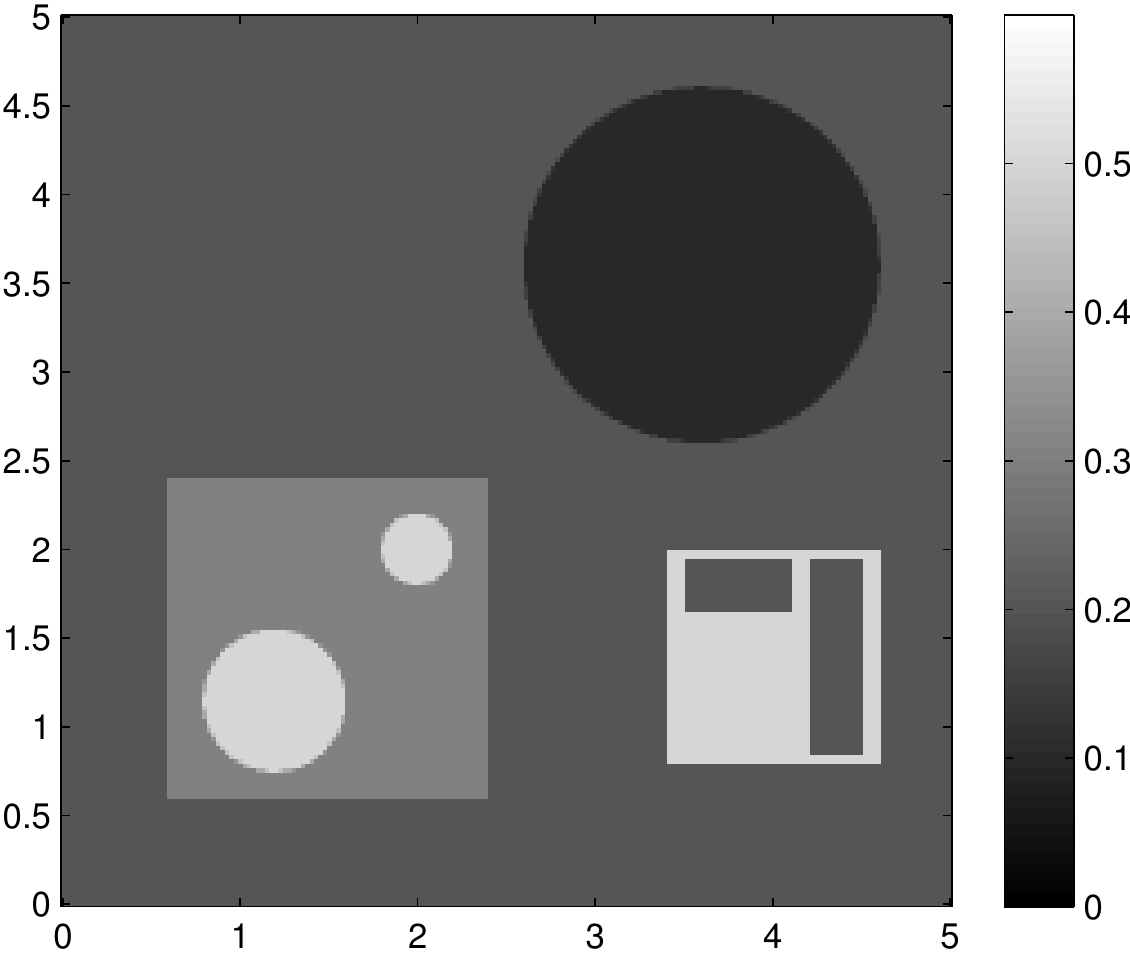} }  \hspace{\baselineskip}
  		\subfloat[$D$]{ \includegraphics[width=0.4\textwidth]{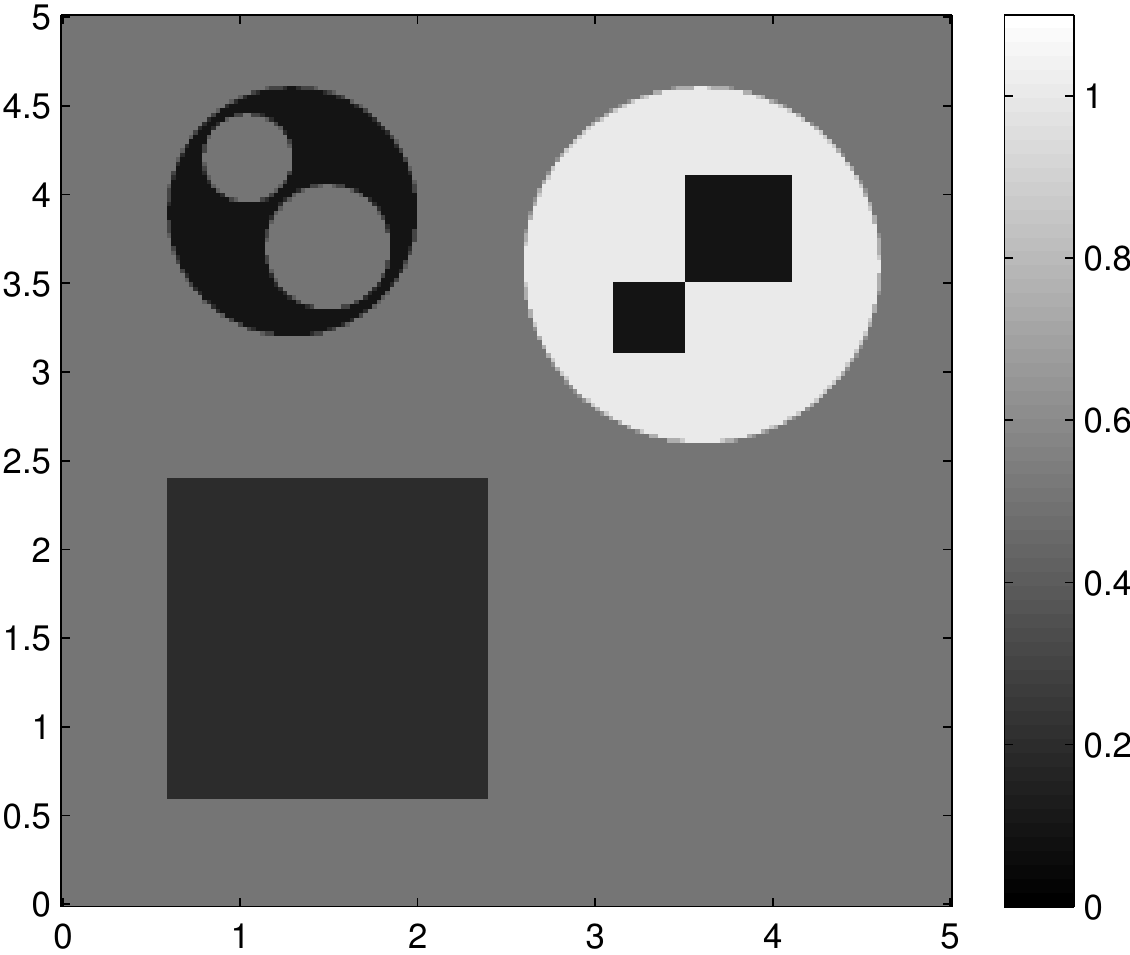} } \\  
		\subfloat[$\E(\mu,D)$]{ \includegraphics[width=0.4\textwidth]{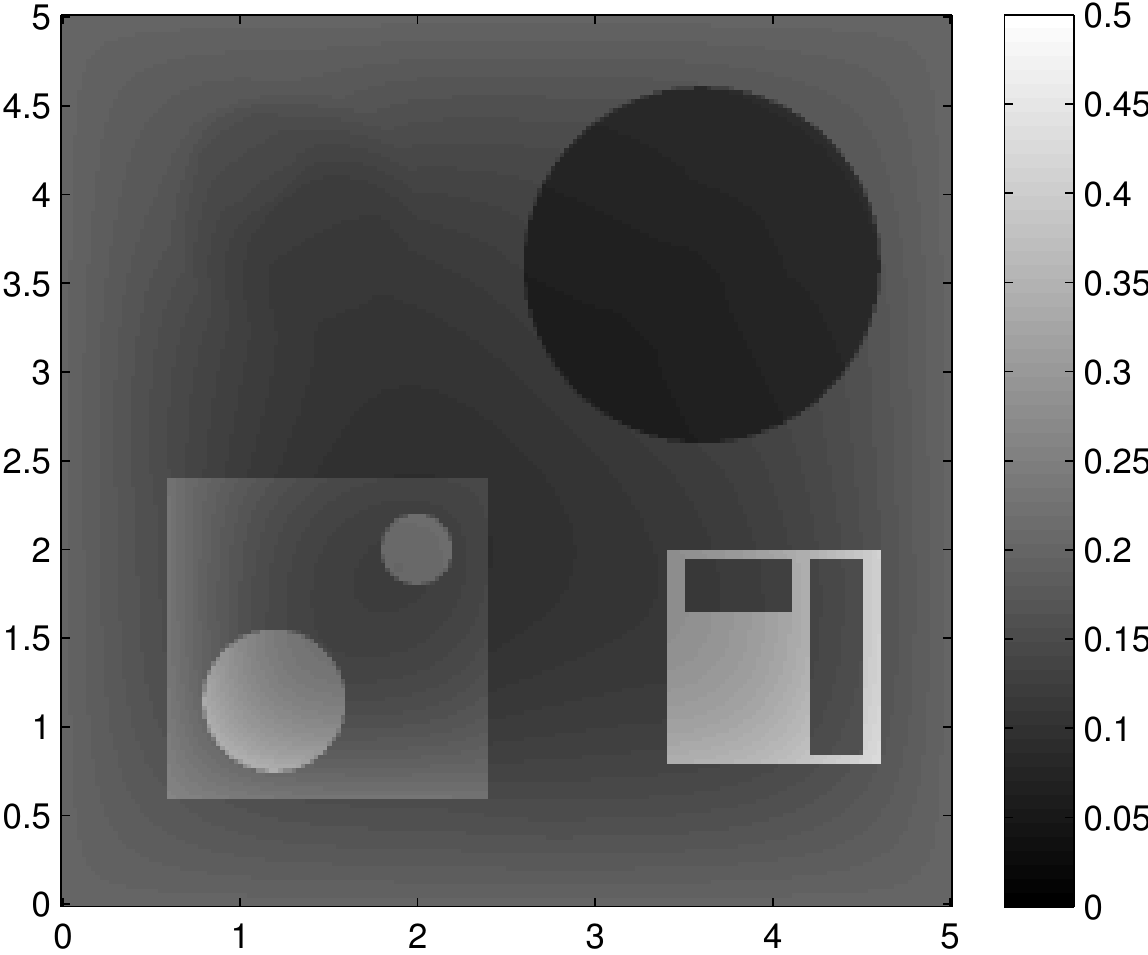} }  \hspace{\baselineskip}
  		\subfloat[$\E(\mu,D)^\delta$ ($10\%$ noise)]{ \includegraphics[width=0.4\textwidth]{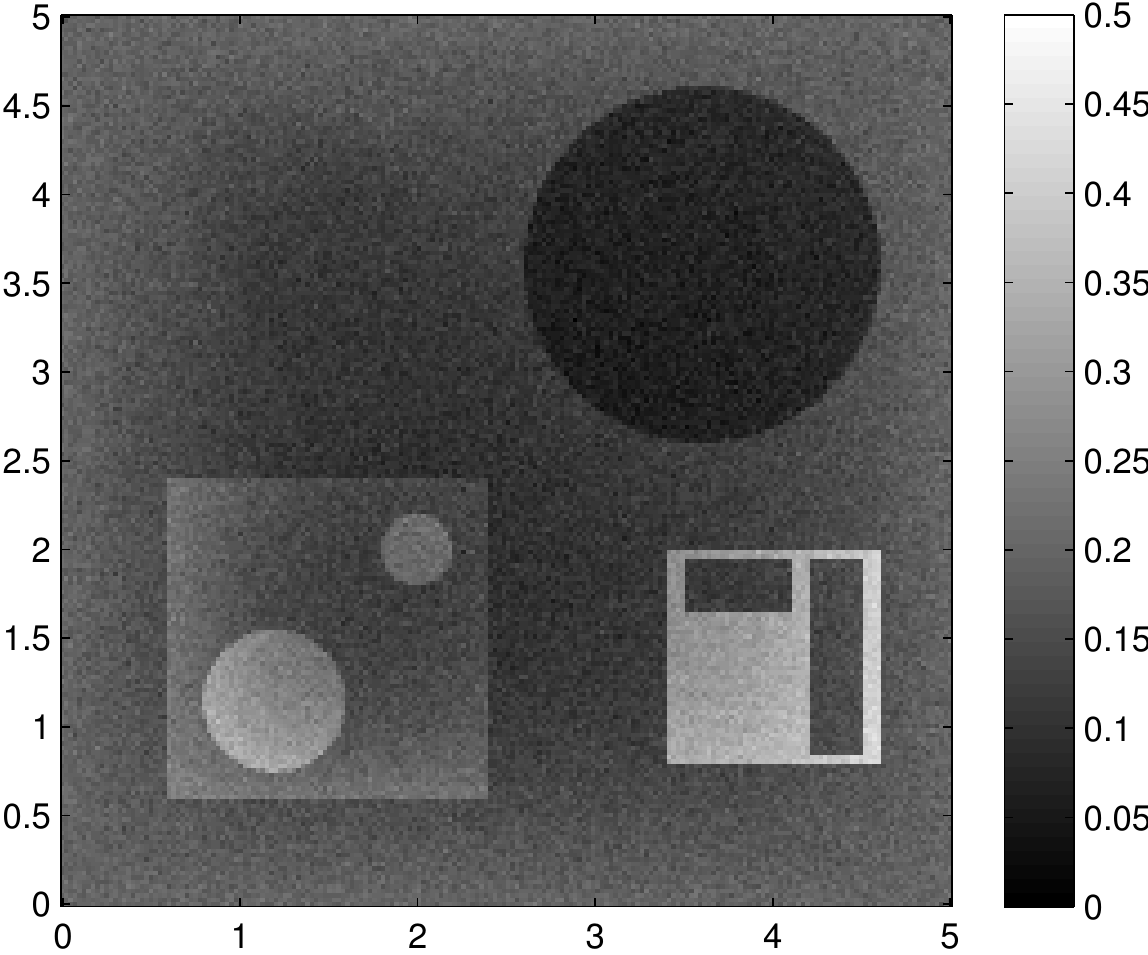} } \\
	\caption[Reconstruction results]
	{ Example B. Parameters $\mu,D$ and FEM-generated data $\E(\mu,D)$.}	
	\label{fig:setup_rectangles}	
\end{figure}

The edge detector described in Section \ref{sec:mumford_shah_minimization} was implemented by finite difference approximations of $|\nabla f_0|, |\nabla f_1|, |\nabla f_2|$  using central differences inside and one-sided differences near the boundary of the domain. The functions $f_1,f_2,f_3$ were calculated by convolution filtering with the \emph{MATLAB} function \emph{imfilter}. For all examples, we used a square cutoff function, i.e., $B=B^\infty_\rho$ with $\rho=2 \lceil{\sigma_k}\rceil$ (where $k=0,1,2$).

The edge detector is used to detect jumps in the derivatives of the data $\E^\delta$ up to second order (to obtain an initial estimate of the parameter jump set $J_0(\mu) \cup J_0(D)$). Since this process is highly sensitive with respect to noise, we varied the edge detection procedure subject to the amount of noise in the data. In the noise-free examples, we estimated the jumps of all three functions $f_0,f_1,f_3$, that is, jumps of derivatives of $\E^\delta$ up to second order. We restricted the jump estimation to $f_0,f_1$ for the low-noise examples (i.e., jumps of derivatives up to first order) and $f_0$ in the high-noise examples (only jumps in the data $\E^\delta$ itself).

To obtain the parameters $\mu,D$ given an initial estimate of their combined edge set $J_0(\mu) \cup J_0(D)$, we used the Ambrosio-Tortorelli approximation \eqref{eq:qpat_ms_at_funct} of the Mumford-Shah functional introduced in Section \ref{sec:mumford_shah_parameter_detection}. To minimize the functional, we used Algorithm \ref{alg:qpat_ms}, iterating, for all examples, the outer alternating-directions loop until the functional changes by less than $0.01\%$ and the inner Gauss-Newton loop until the functional value changes less than $1\%$. We directly implemented the systems \eqref{eq:qpat_ms_system} and \eqref{eq:qpat_ms_edges_el} using (self-written) linear-basis finite element code. The implementation is not fully conforming, that is, where necessary we used conversions between piecewise linear and piecewise constant functions for simplicity. The discrete systems were solved using the standard \emph{MATLAB} sparse equation solver \emph{mldivide}.

For all examples, we chose $a=0.01, b=3$ and $\epsilon=0.01$. The other reconstruction parameters used (which were selected by hand) are listed in Tables \ref{tab:results_parameters_iteration} and \ref{tab:results_parameters_edge_detection}. 

\begin{table}[ht]
	\centering
  		  \begin{tabular}[b]{ccccccc}	 		  	  
              					 & $\boldsymbol{\alpha_\mu}$ & $\boldsymbol{\alpha_D}$ & $\boldsymbol{\beta_\mu}$ &  $\boldsymbol{\beta_D}$ 
              					 & $\boldsymbol{\zeta_\mu}$ & $\boldsymbol{\zeta_D}$  \\
			  \toprule
		   	  \textbf{Ex. A} ($0 \%$ noise) & $10^{-2}$ & $10^{-4}$ & $10^{-6}$ & $10^{-8}$ & $10^{-6}$ & $10^{-4}$  \\
		   	  \midrule		   	    
		   	  \textbf{Ex. A} ($0.1 \%$ noise) & $10^{-2}$ & $10^{-5}$ & $10^{-6}$ & $10^{-8}$ & $10^{-6}$ & $10^{-3}$   \\
		   	  \midrule		   	  		   	  
		   	  \textbf{Ex. A} ($10 \%$ noise) & $1$ & $5 \cdot 10^{-3}$ & $10^{-5}$ & $5 \cdot 10^{-6}$ & $10^{-5}$ & $10^{-3}$  \\
		   	  \midrule		   	  
		   	  \textbf{Ex. B} ($0 \%$ noise) & $10^{-2}$ & $10^{-4}$ & $10^{-6}$ & $10^{-8}$ & $10^{-6}$ & $10^{-5}$  \\  
		   	  \midrule		   	  
		   	  \textbf{Ex. B} ($0.1 \%$ noise) & $10^{-2}$ & $10^{-5}$ & $10^{-6}$ & $10^{-8}$ & $10^{-6}$ & $10^{-3}$  \\
		   	  \midrule
		   	  \textbf{Ex. B} ($10 \%$ noise) & $1$ & $10^{-3}$ & $10^{-5}$ & $10^{-7}$ & $10^{-5}$ & $10^{-3}$ \\ 
		   	  \bottomrule 
  	      \end{tabular} 	  
	\caption[Reconstruction parameters]
	{ Parameters used for Figures \ref{fig:results_circles} and \ref{fig:results_rectangles} (for the functional \eqref{eq:qpat_ms_at_funct}). }
	\label{tab:results_parameters_iteration}	
\end{table} 

\begin{table}[ht]
\vspace{\baselineskip}
	\centering
  		  \begin{tabular}[b]{cccccccc}	 		  	  
              					 & $\boldsymbol{\sigma_0}$ & $\boldsymbol{\sigma_1}$ & $\boldsymbol{\sigma_2}$ & $\boldsymbol{\xi_0}$ & $\boldsymbol{\xi_1}$ & $\boldsymbol{\xi_2}$ & $\boldsymbol{\gamma}$ \\
			  \toprule
		   	  \textbf{Ex. A} ($0 \%$ noise) & $0.5$ & $0.5$ & $0.5$ & $0.1$ & $0.1$ & $0.1$ & $10^{-4}$\\
		   	  \midrule		   	    
		   	  \textbf{Ex. A} ($0.1 \%$ noise) & $0.5$ & $2$ & & $0.05$ & $0.06$ & & $10^{-5}$ \\
		   	  \midrule		   	  		   	  
		   	  \textbf{Ex. A} ($10 \%$ noise)  & $1.5$ & & & $0.05$ & & & \\
		   	  \midrule		   	  
		   	  \textbf{Ex. B} ($0 \%$ noise)  & $0.5$ & $0.5$ & $0.5$ & $0.1$ & $0.1$ & $0.1$ & $10^{-6}$  \\
		   	  \midrule		   	  
		   	  \textbf{Ex. B} ($0.1 \%$ noise) & $0.5$ & $2.4$ & & $0.05$ & $0.06$ & &  $10^{-7}$ \\
		   	  \midrule
		   	  \textbf{Ex. B} ($10 \%$ noise) & $1.5$ & & & $0.03$ & & &  \\ 
		   	  \bottomrule 
  	      \end{tabular} 	  
	\caption[Reconstruction parameters]
	{ Parameters used for Figures \ref{fig:results_circles} and \ref{fig:results_rectangles} (for edge detection). }
	\label{tab:results_parameters_edge_detection}	
\end{table} 

Reconstruction results and error profiles at different noise levels can be seen in Figures \ref{fig:results_circles} and \ref{fig:results_rectangles}. In both examples, the noise-free reconstructions are very accurate and contain mostly smoothing error. In the low-noise reconstructions, due to the fact that more regularization is necessary, some of the parameter variation is underestimated. In the high-noise examples, most detail in $D$ is lost since a lot of regularization is required to get reasonable results. The fine detail in $\mu$ can, however, still be recovered very accurately in both examples. 

\begin{figure}[ht]
	\centering
		\includegraphics[width=0.29\textwidth]{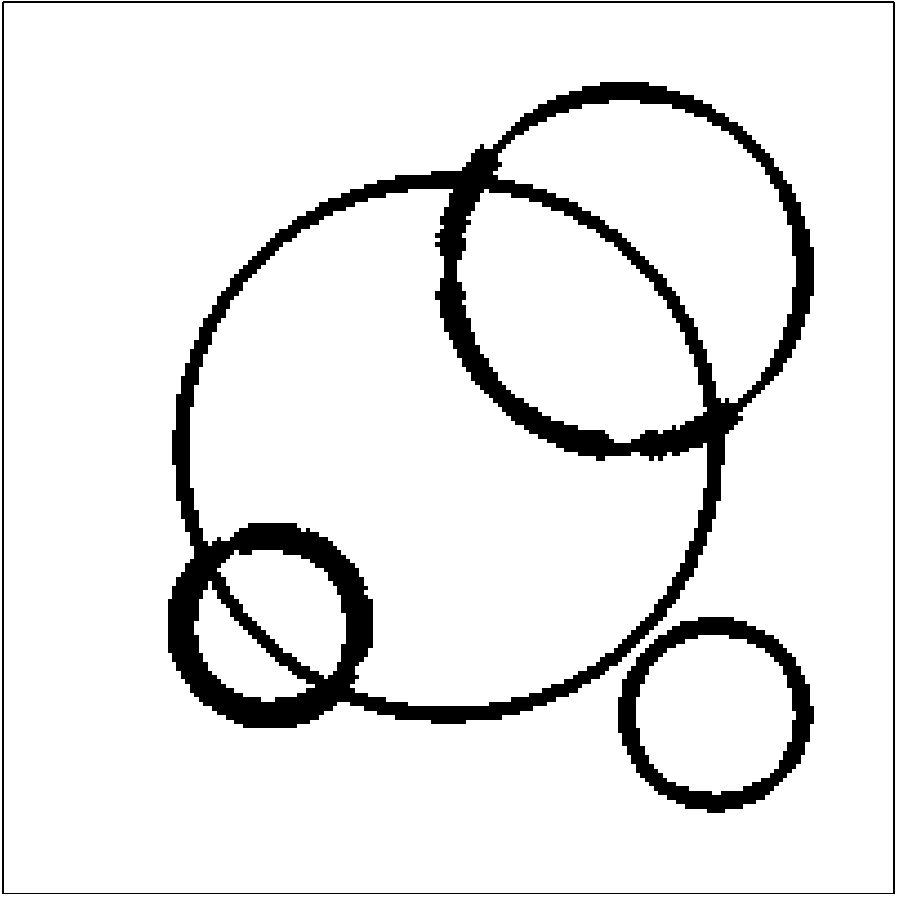}  \hspace{\baselineskip}
		\includegraphics[width=0.29\textwidth]{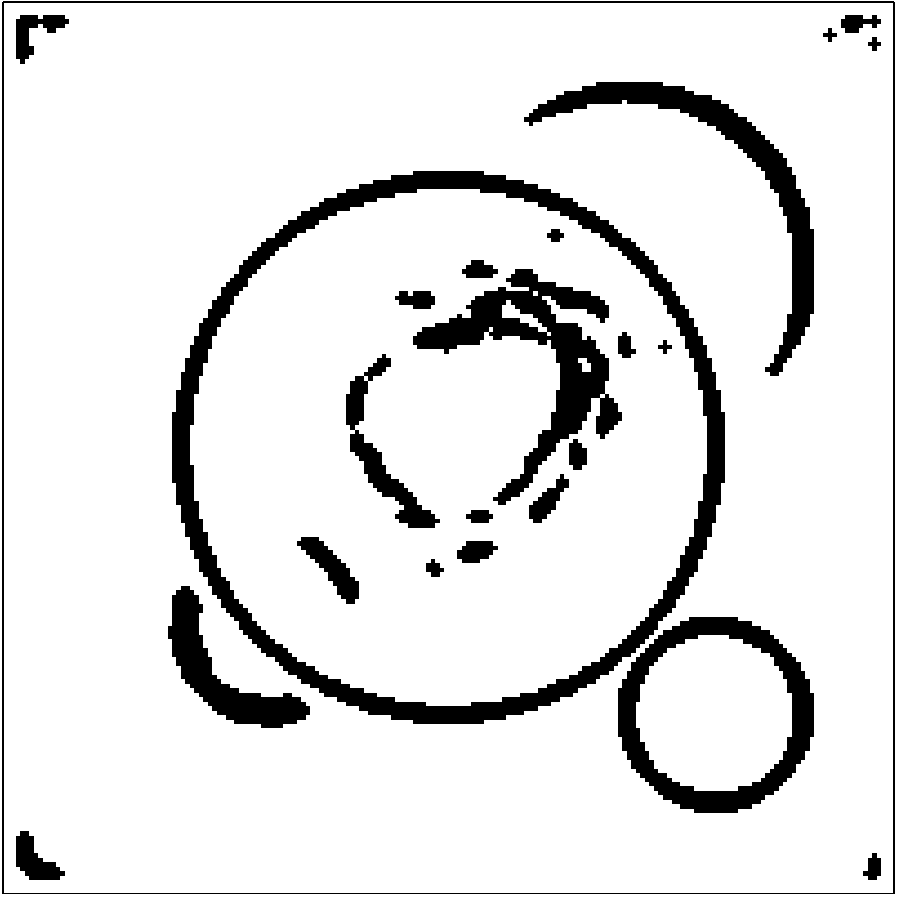}  \hspace{\baselineskip}		
		\includegraphics[width=0.29\textwidth]{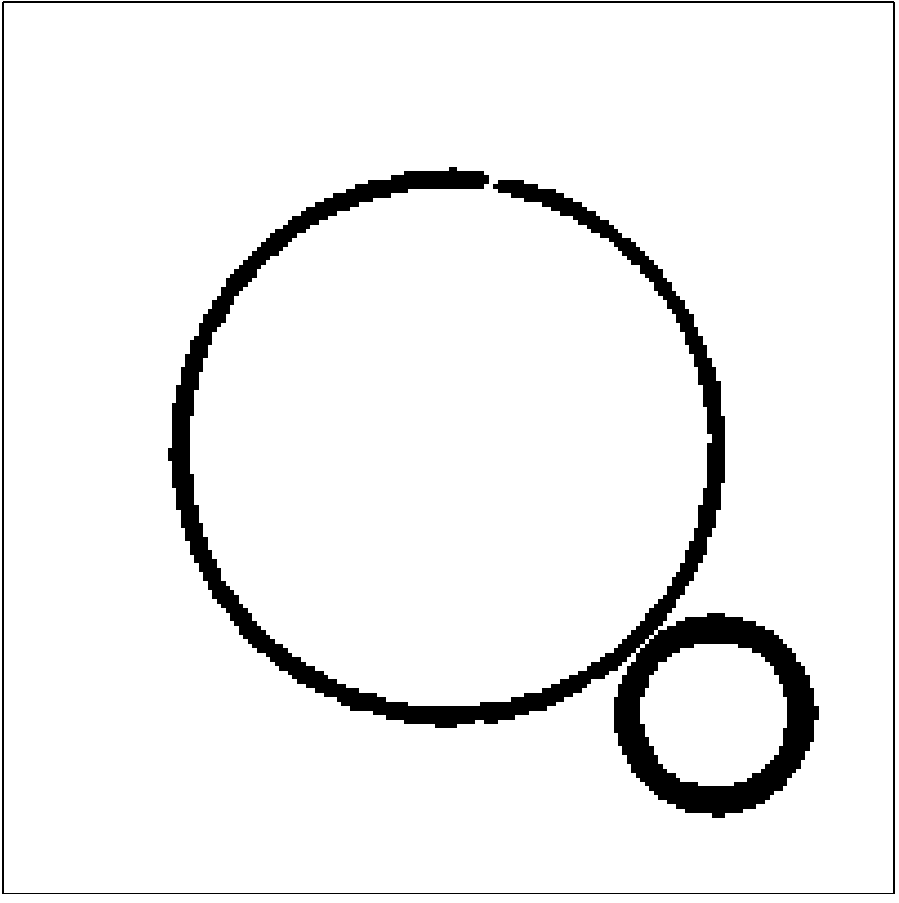}  \\ \vspace{\baselineskip}
		\includegraphics[width=0.29\textwidth]{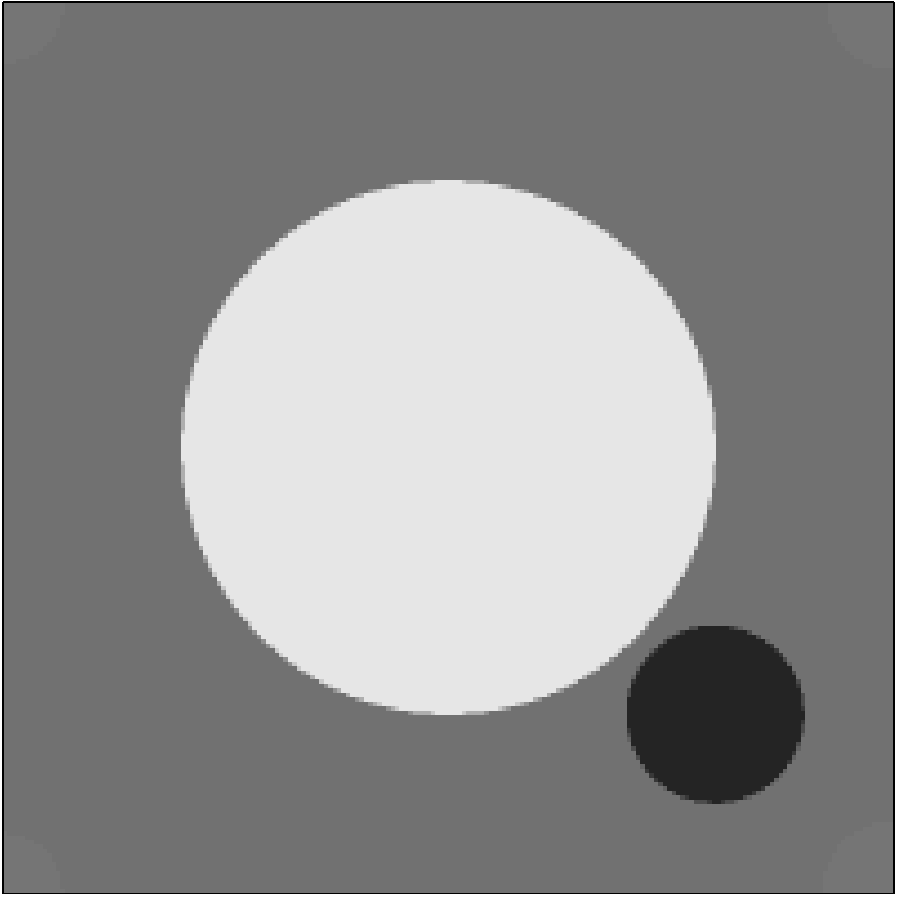}  \hspace{\baselineskip}
		\includegraphics[width=0.29\textwidth]{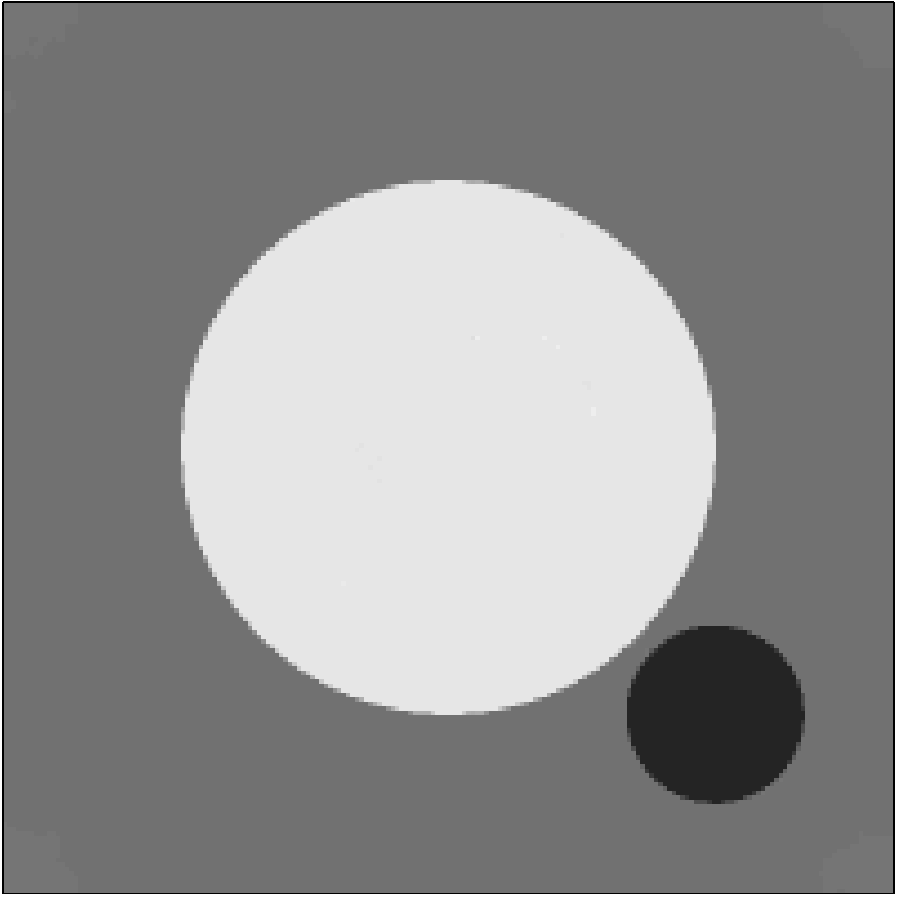}  \hspace{\baselineskip}		
		\includegraphics[width=0.29\textwidth]{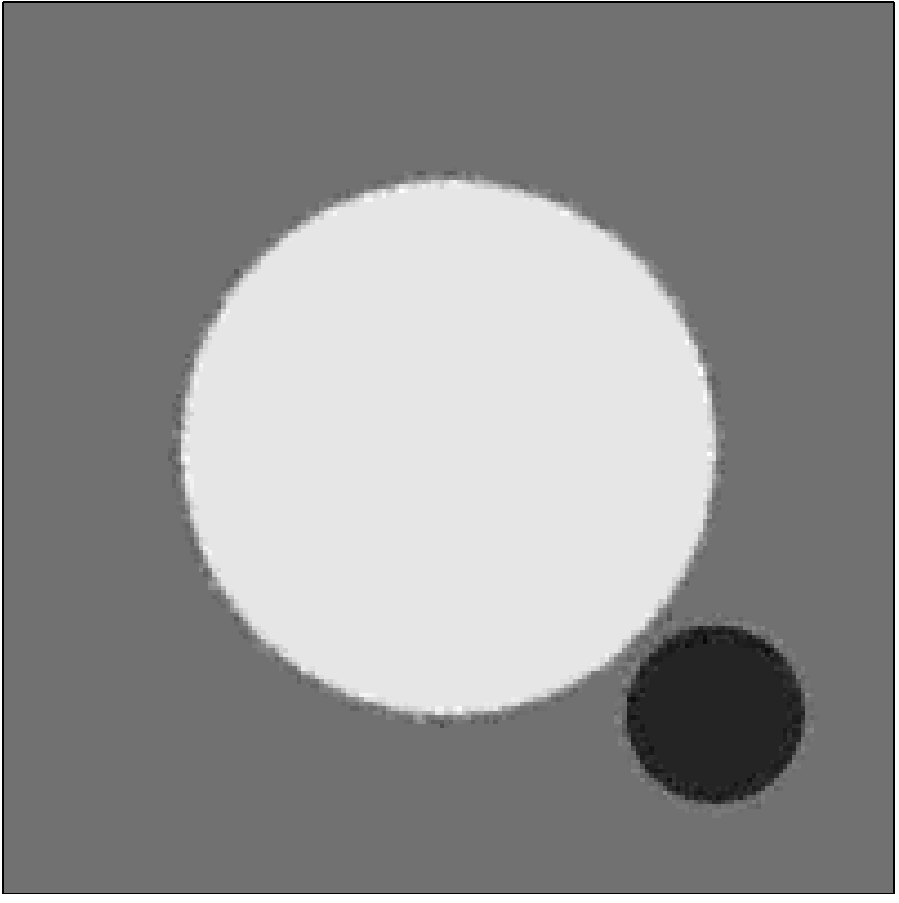}  \\	\vspace{\baselineskip}
		\includegraphics[width=0.29\textwidth]{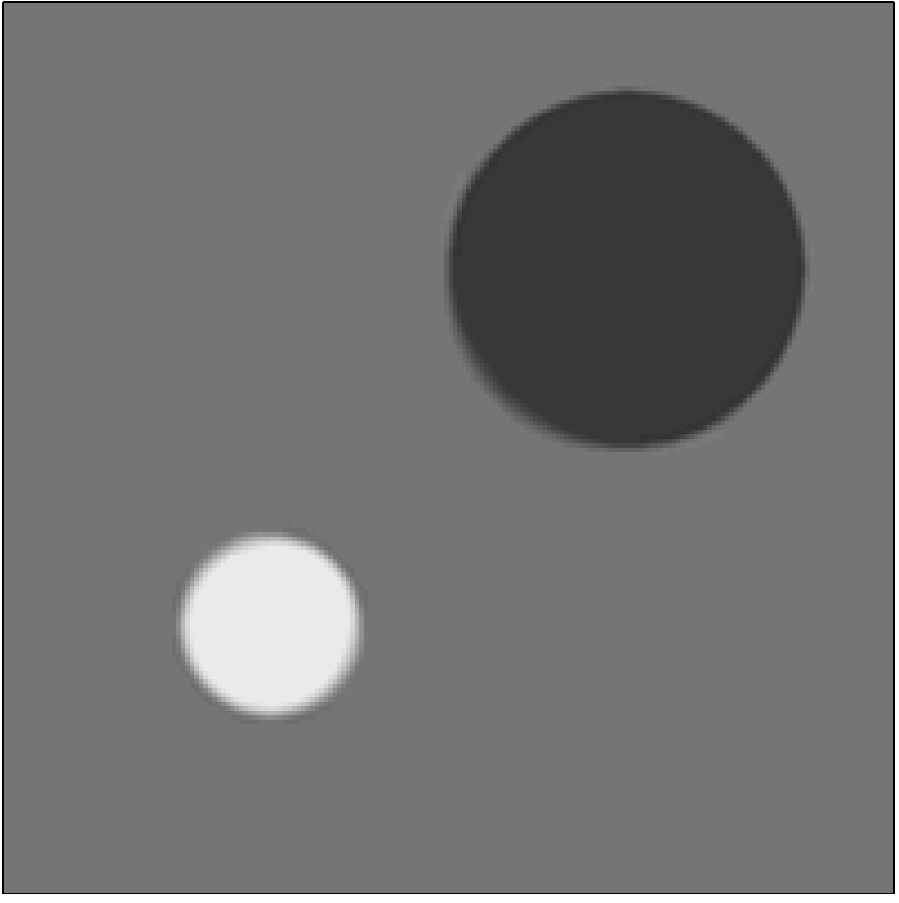}  \hspace{\baselineskip}
		\includegraphics[width=0.29\textwidth]{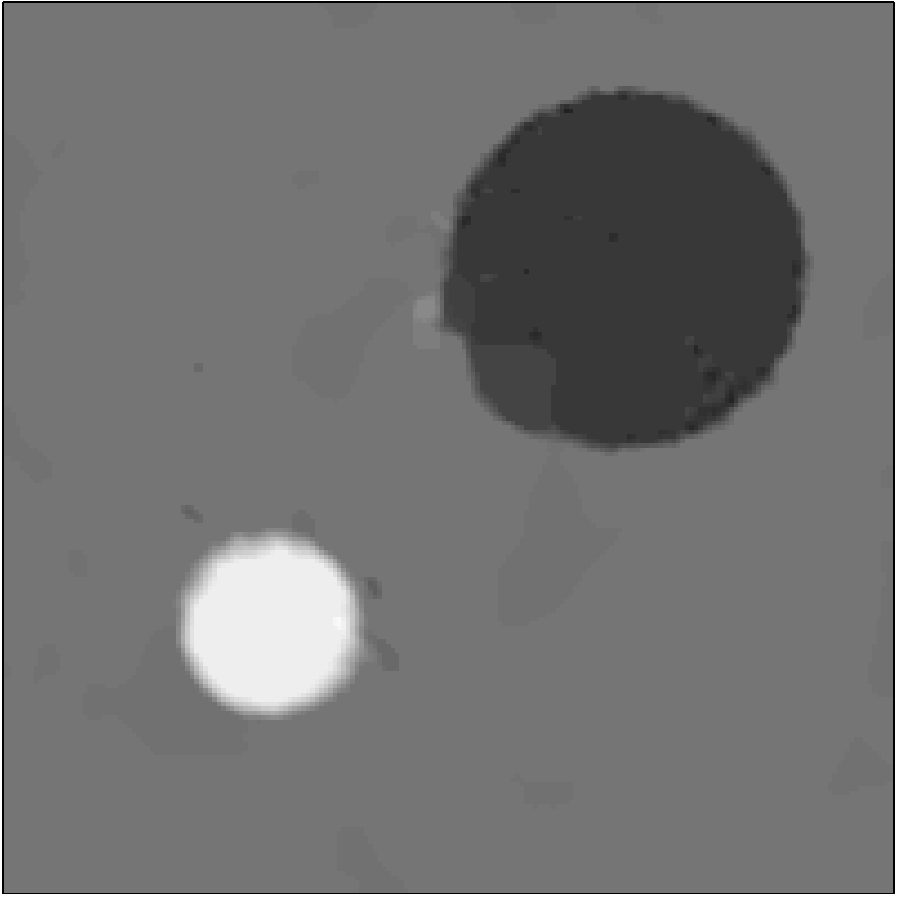}  \hspace{\baselineskip}		
		\includegraphics[width=0.29\textwidth]{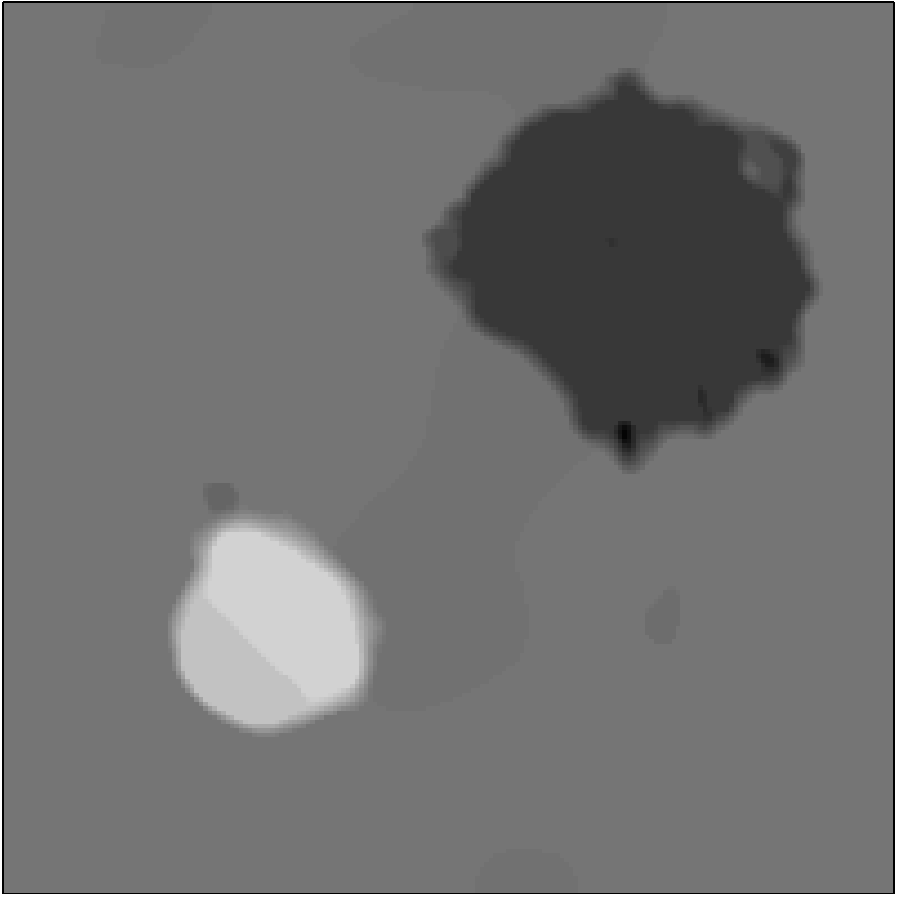}  \\ \vspace{\baselineskip}
		\includegraphics[width=0.29\textwidth]{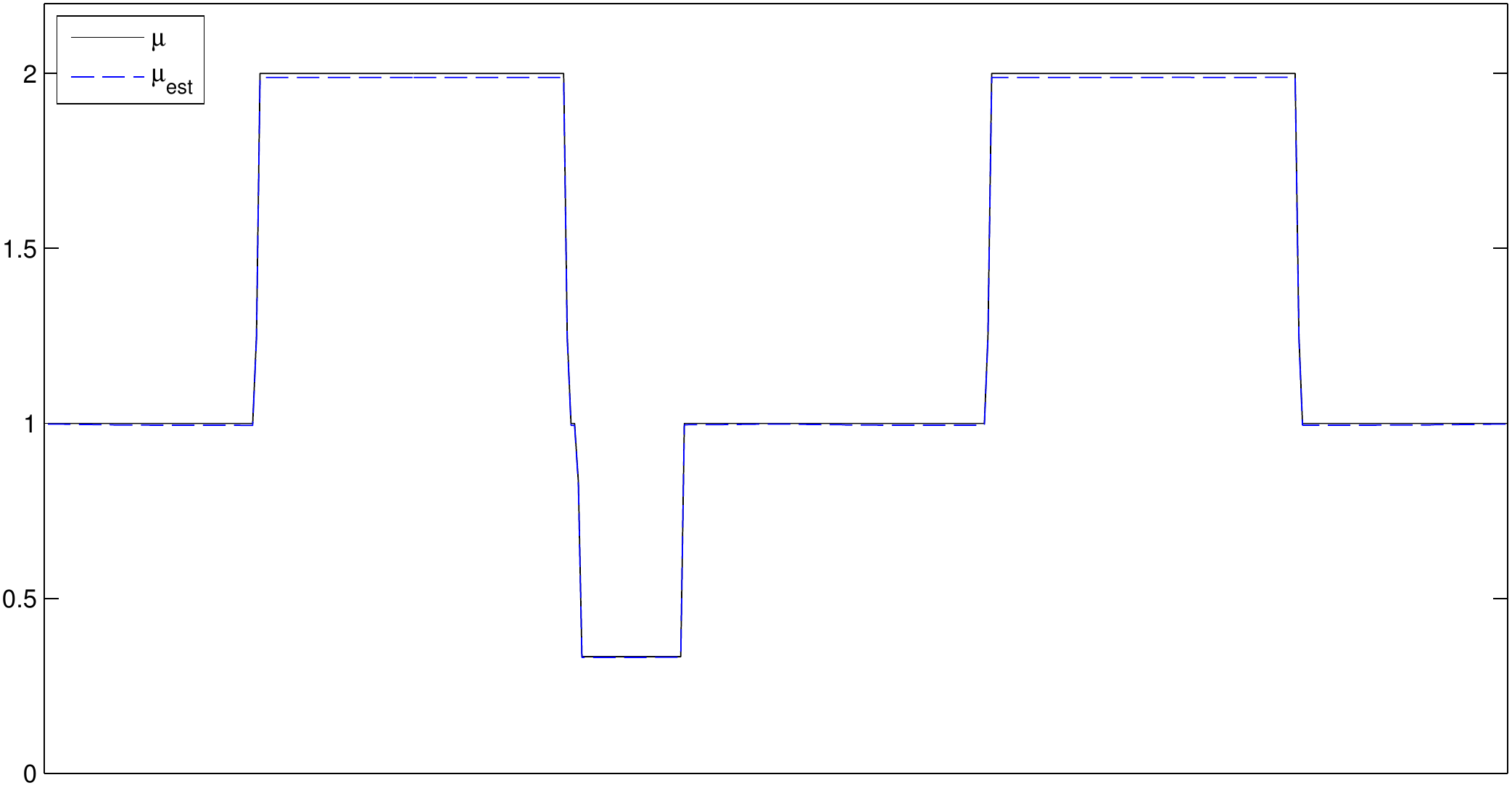}  \hspace{\baselineskip}
		\includegraphics[width=0.29\textwidth]{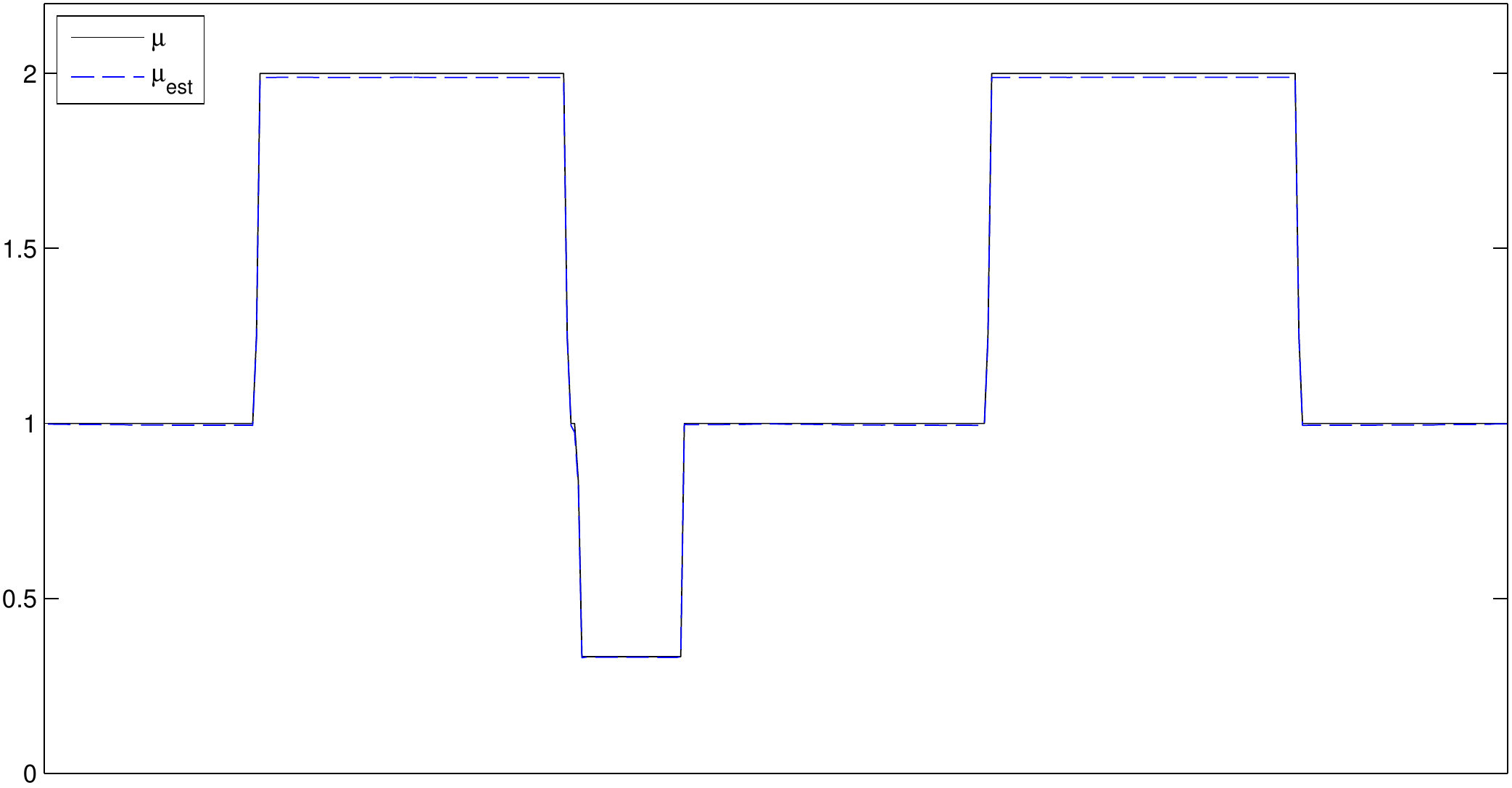}  \hspace{\baselineskip}		
		\includegraphics[width=0.29\textwidth]{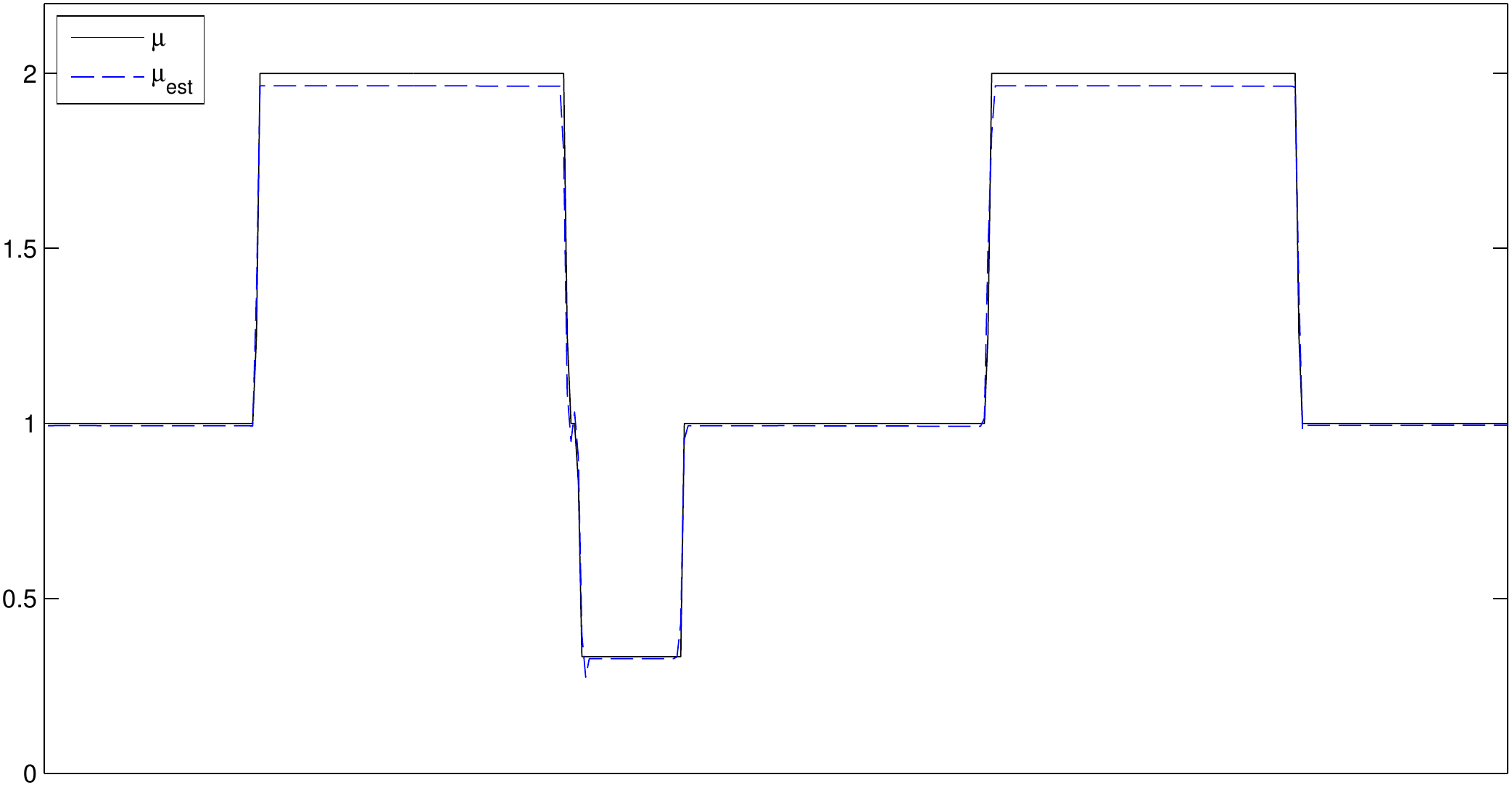}  \\	\vspace{\baselineskip}
		\includegraphics[width=0.29\textwidth]{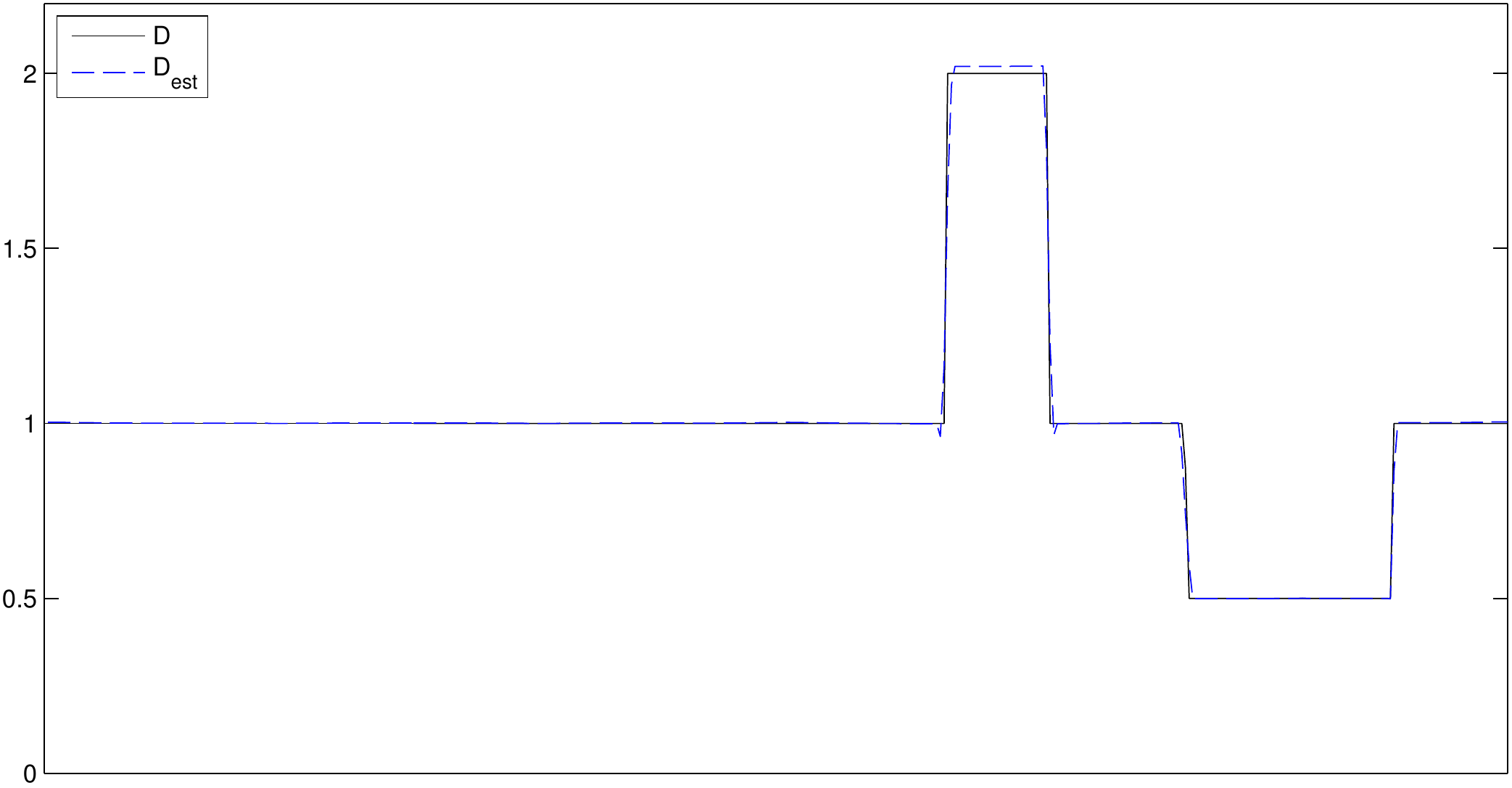}  \hspace{\baselineskip}
		\includegraphics[width=0.29\textwidth]{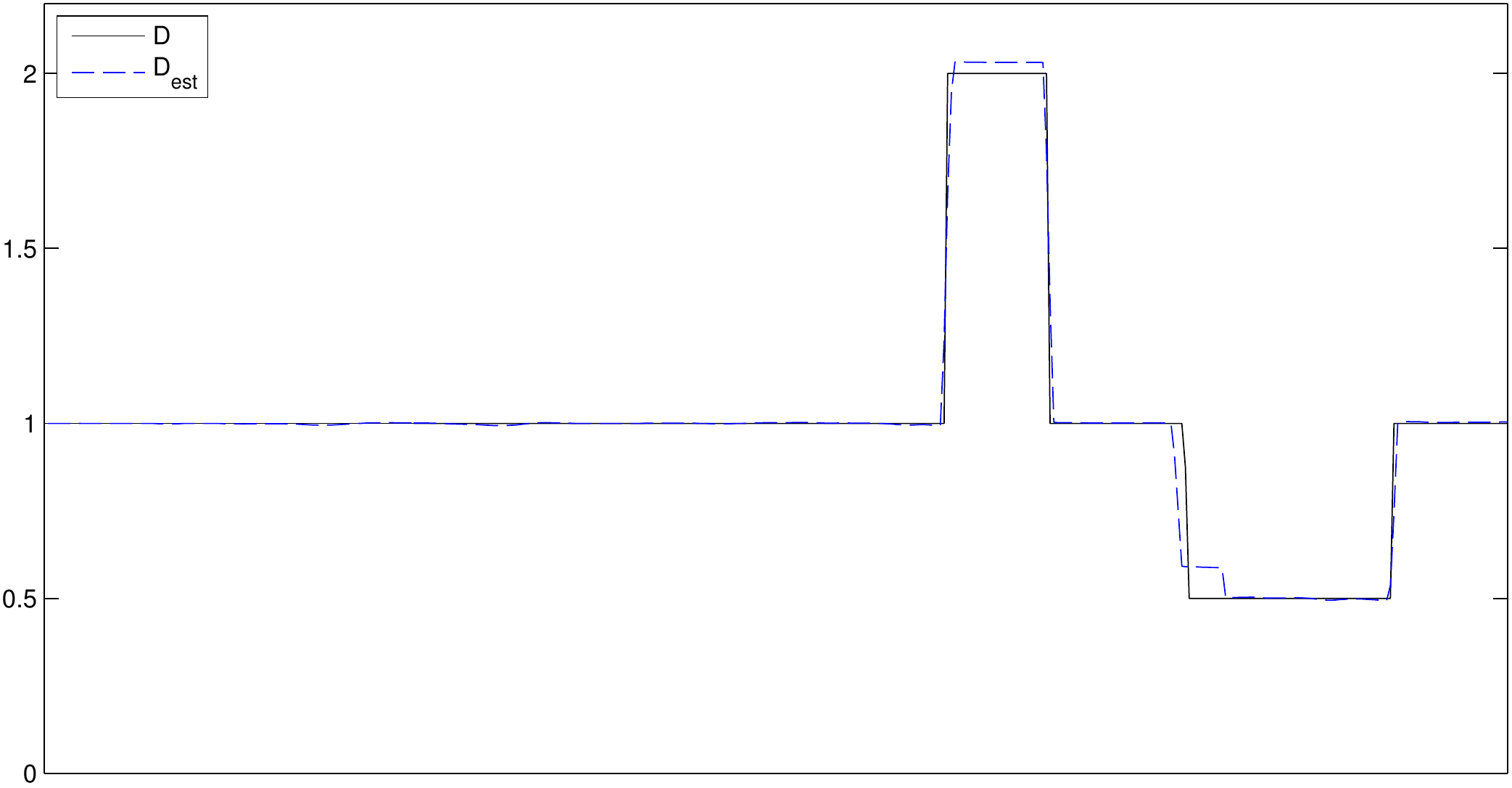}  \hspace{\baselineskip}		
		\includegraphics[width=0.29\textwidth]{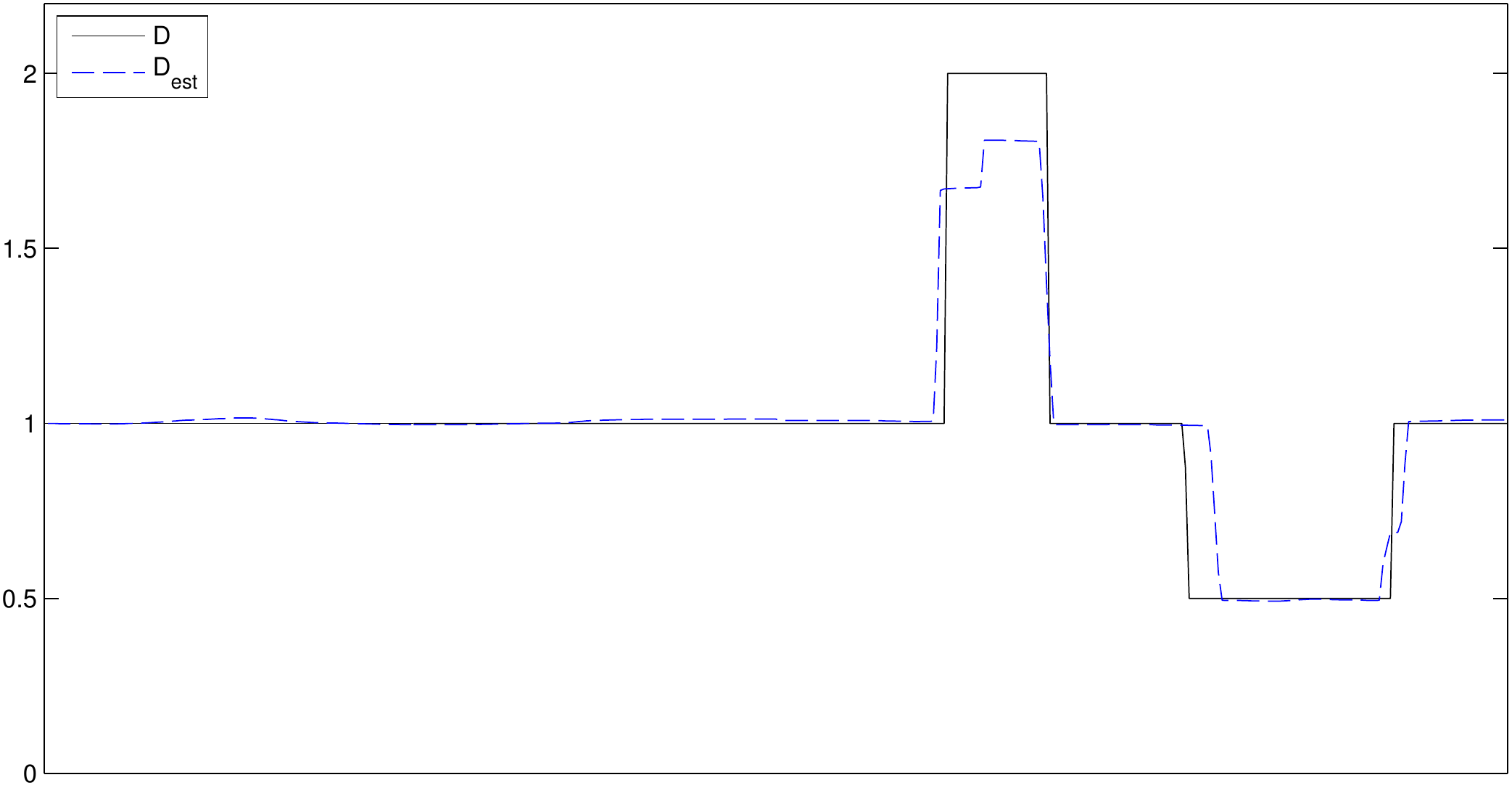}  \\ 					
	\caption[Reconstruction results]
	{ Reconstructions A. Columns: $0\%$, $0.1\%$ and $10\%$ noise. Rows: Estimated edge set, reconstructed parameters $\mu_{\text{est}}$, $D_{\text{est}}$ and error profiles for $\mu_{\text{est}},D_{\text{est}}$.  The error profiles show values of the true and reconstructed parameters along the image diagonals. The color axis in the images of the reconstructed parameters was fixed to the same range as the true parameters shown in Figure \ref{fig:setup_circles}. }		
	\label{fig:results_circles}	
\end{figure}

\begin{figure}[ht]
	\centering
		\includegraphics[width=0.29\textwidth]{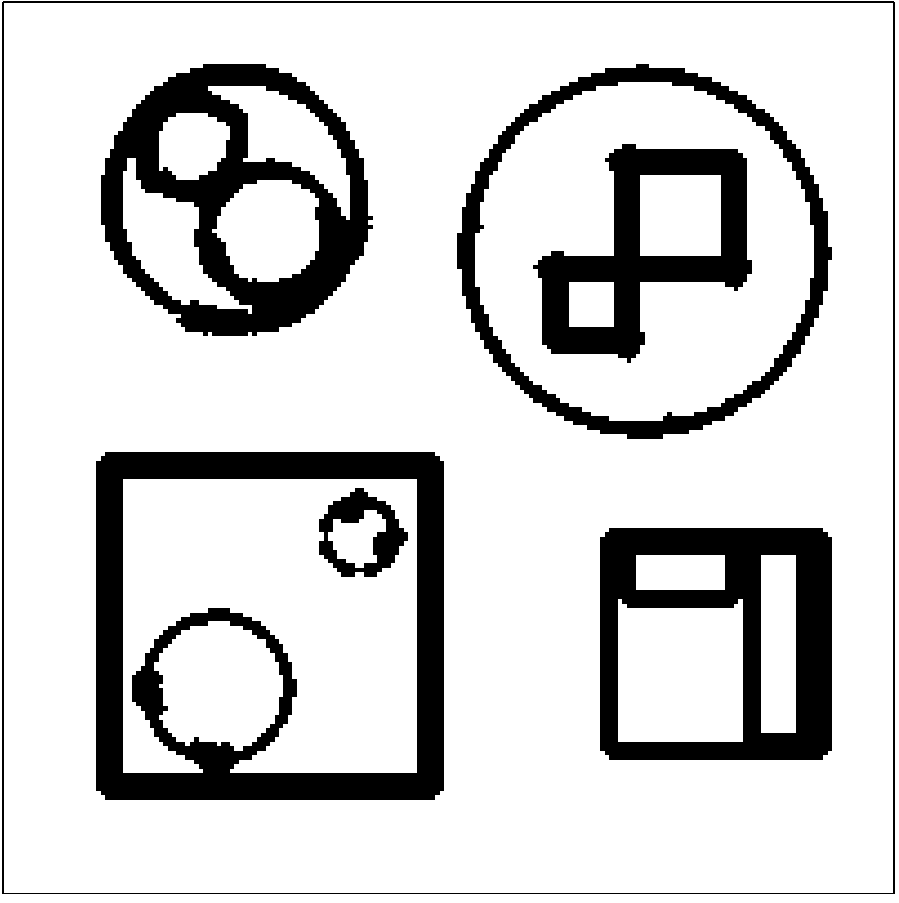}  \hspace{\baselineskip}
		\includegraphics[width=0.29\textwidth]{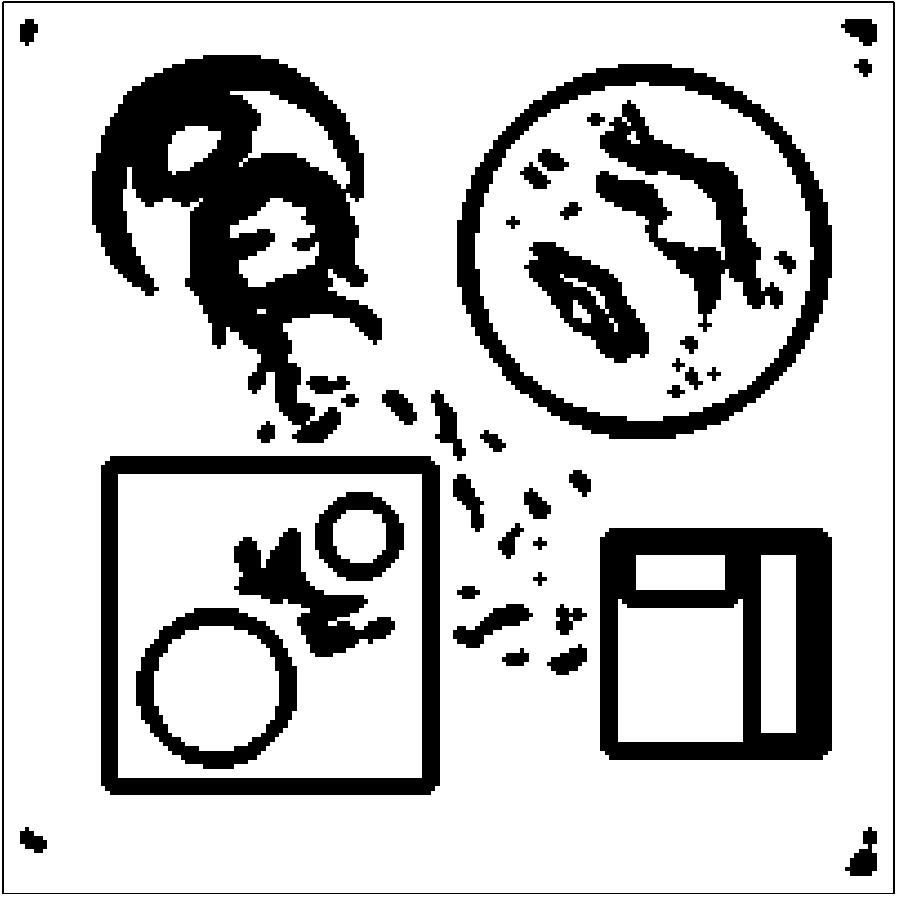}  \hspace{\baselineskip}		
		\includegraphics[width=0.29\textwidth]{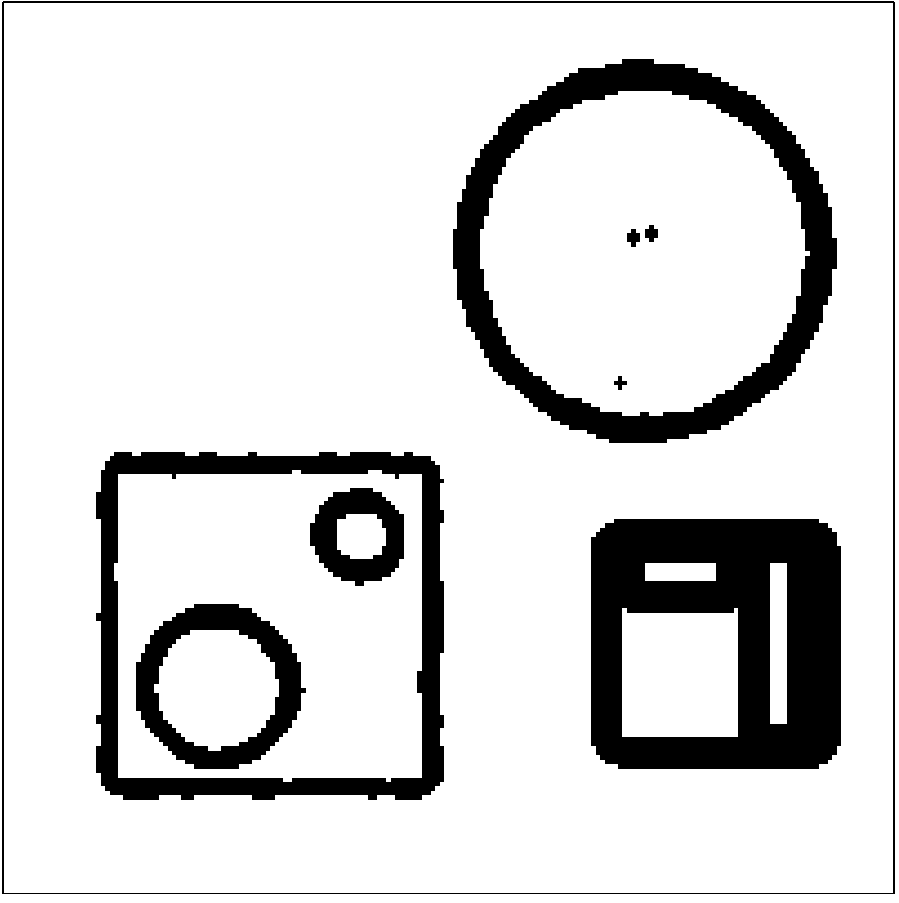}  \\ \vspace{\baselineskip}	
		\includegraphics[width=0.29\textwidth]{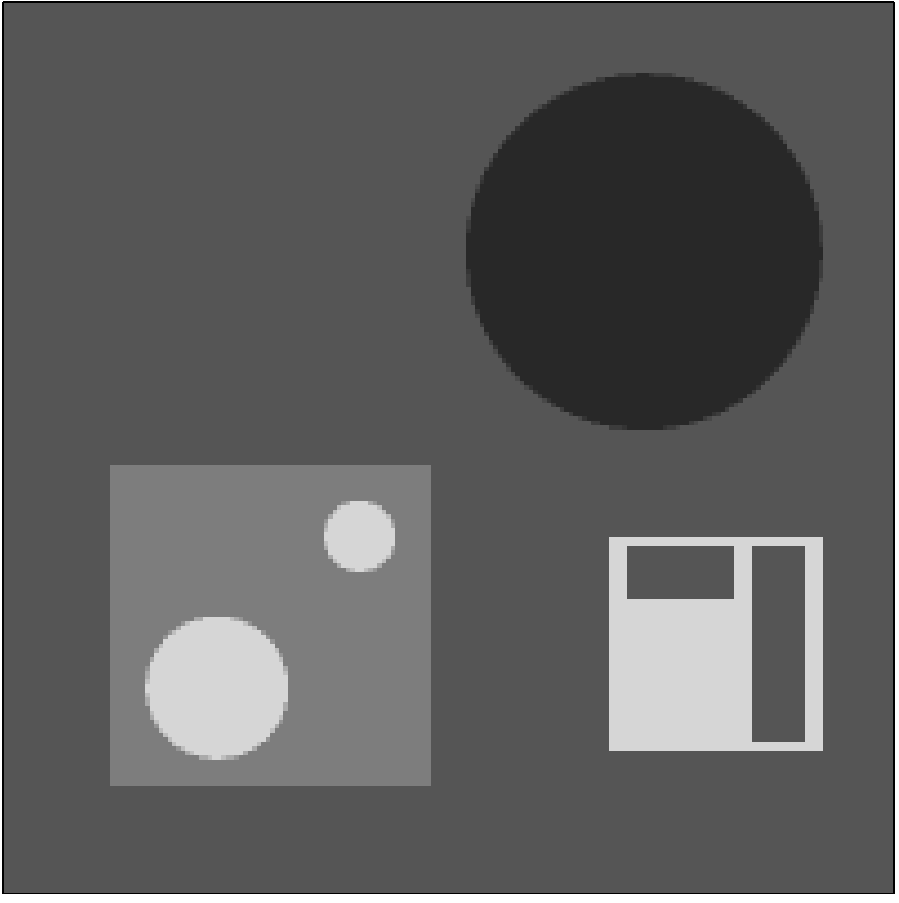}  \hspace{\baselineskip}
		\includegraphics[width=0.29\textwidth]{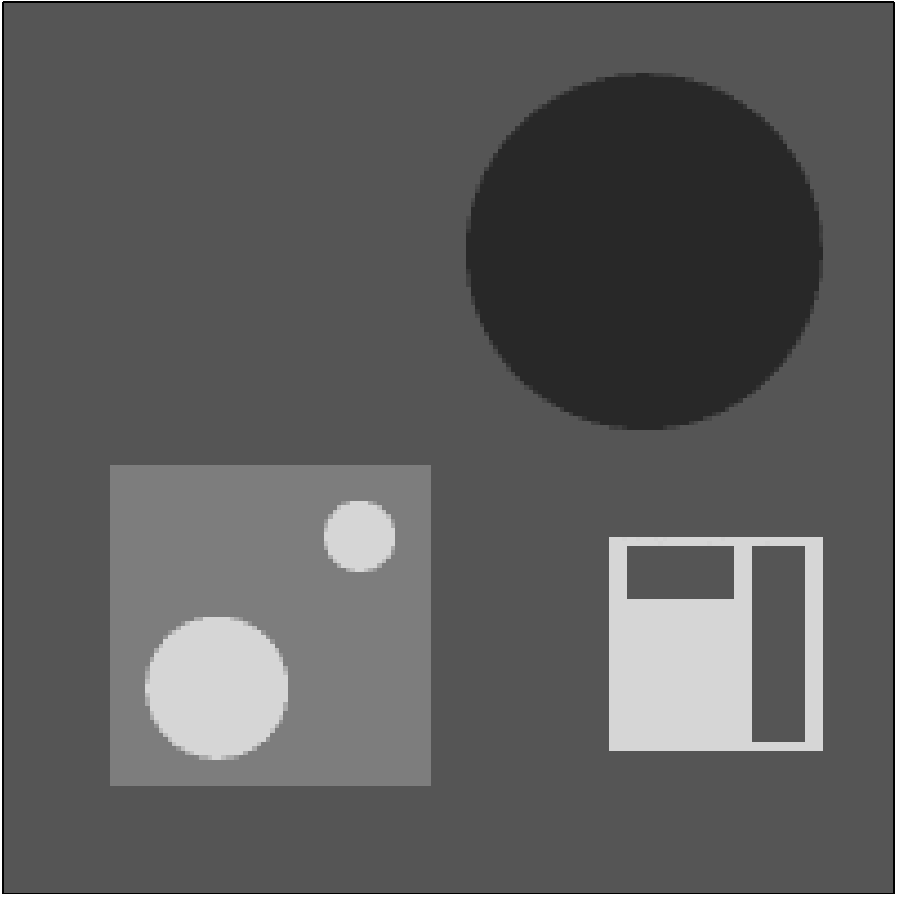}  \hspace{\baselineskip}		
		\includegraphics[width=0.29\textwidth]{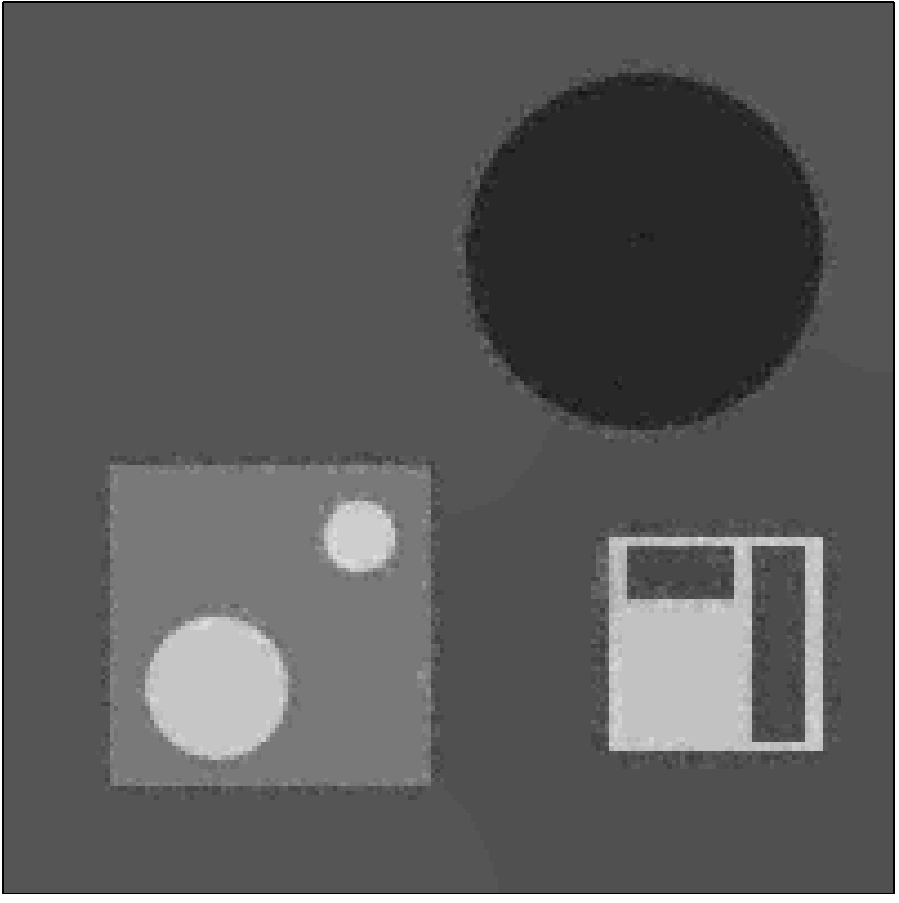}  \\ \vspace{\baselineskip}		
		\includegraphics[width=0.29\textwidth]{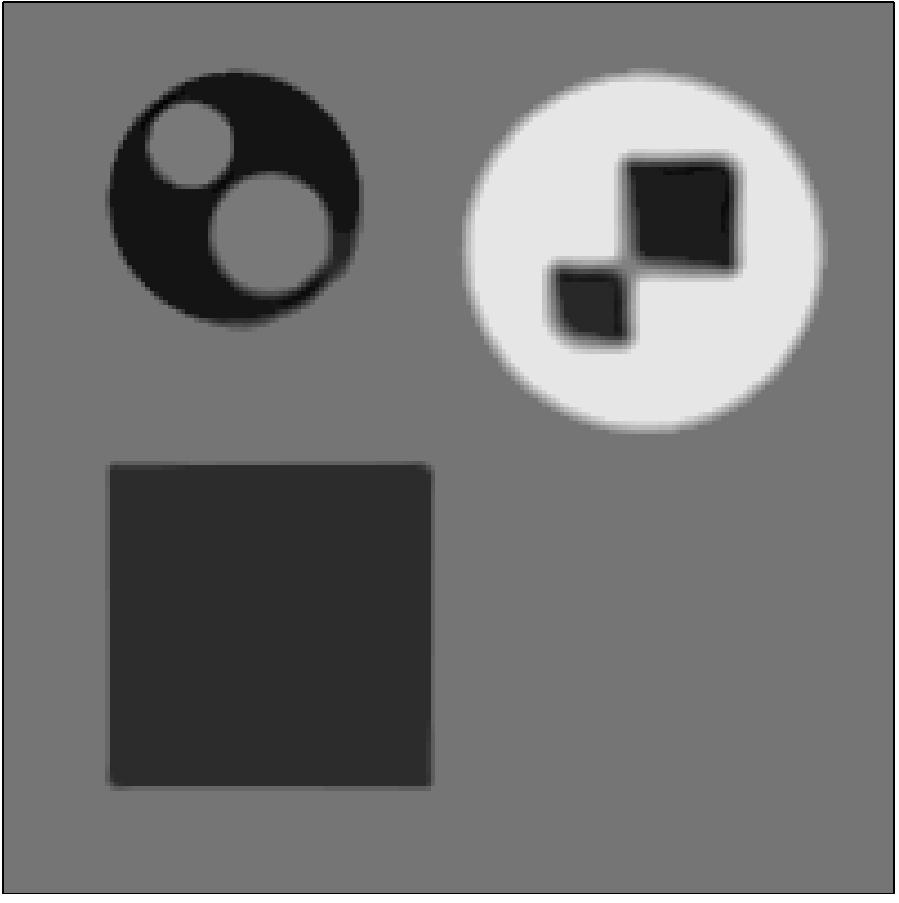}  \hspace{\baselineskip}
		\includegraphics[width=0.29\textwidth]{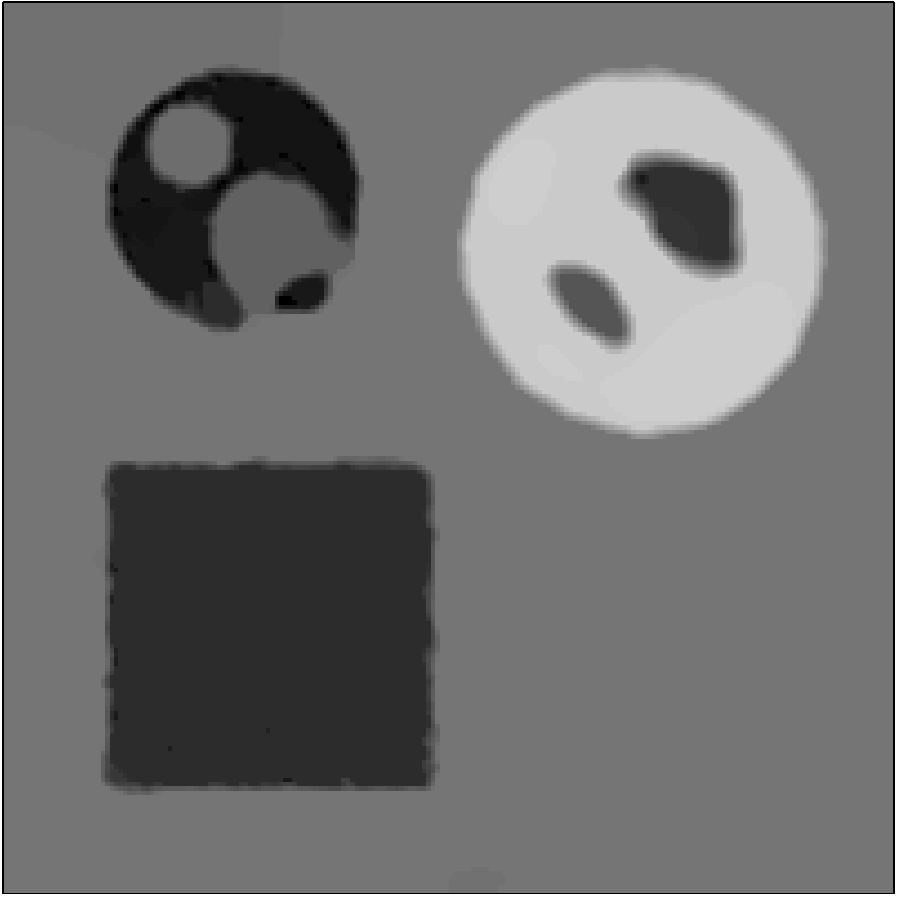}  \hspace{\baselineskip}		
		\includegraphics[width=0.29\textwidth]{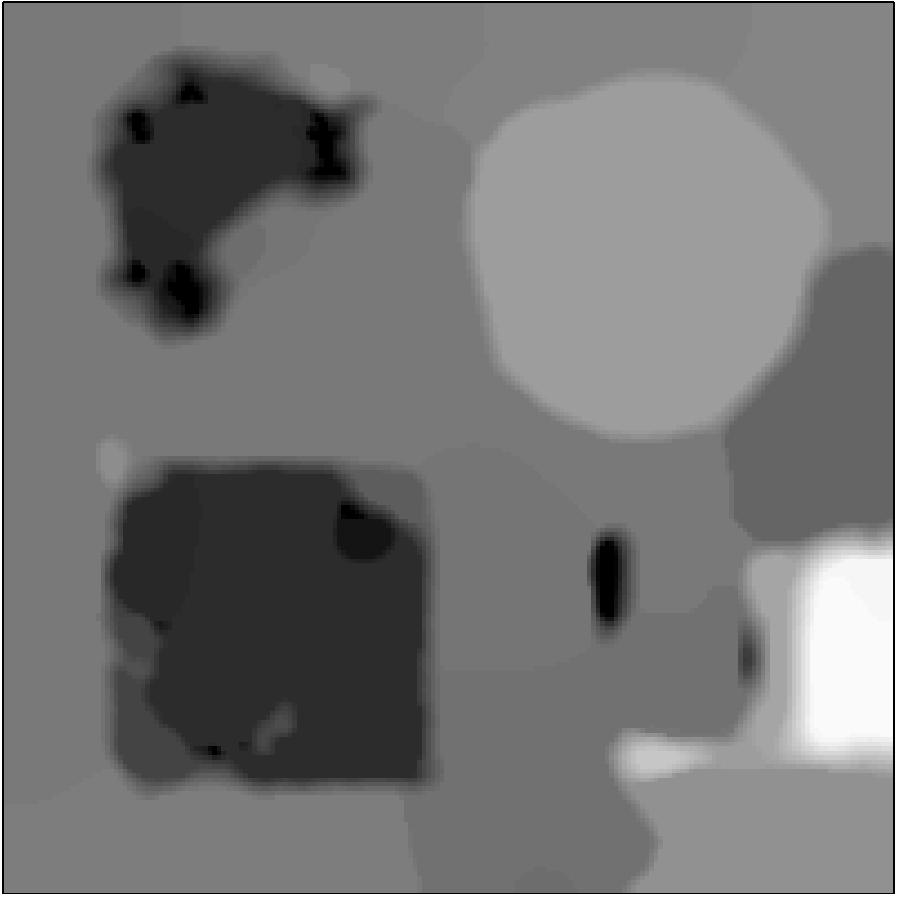}  \\ \vspace{\baselineskip}	
		\includegraphics[width=0.29\textwidth]{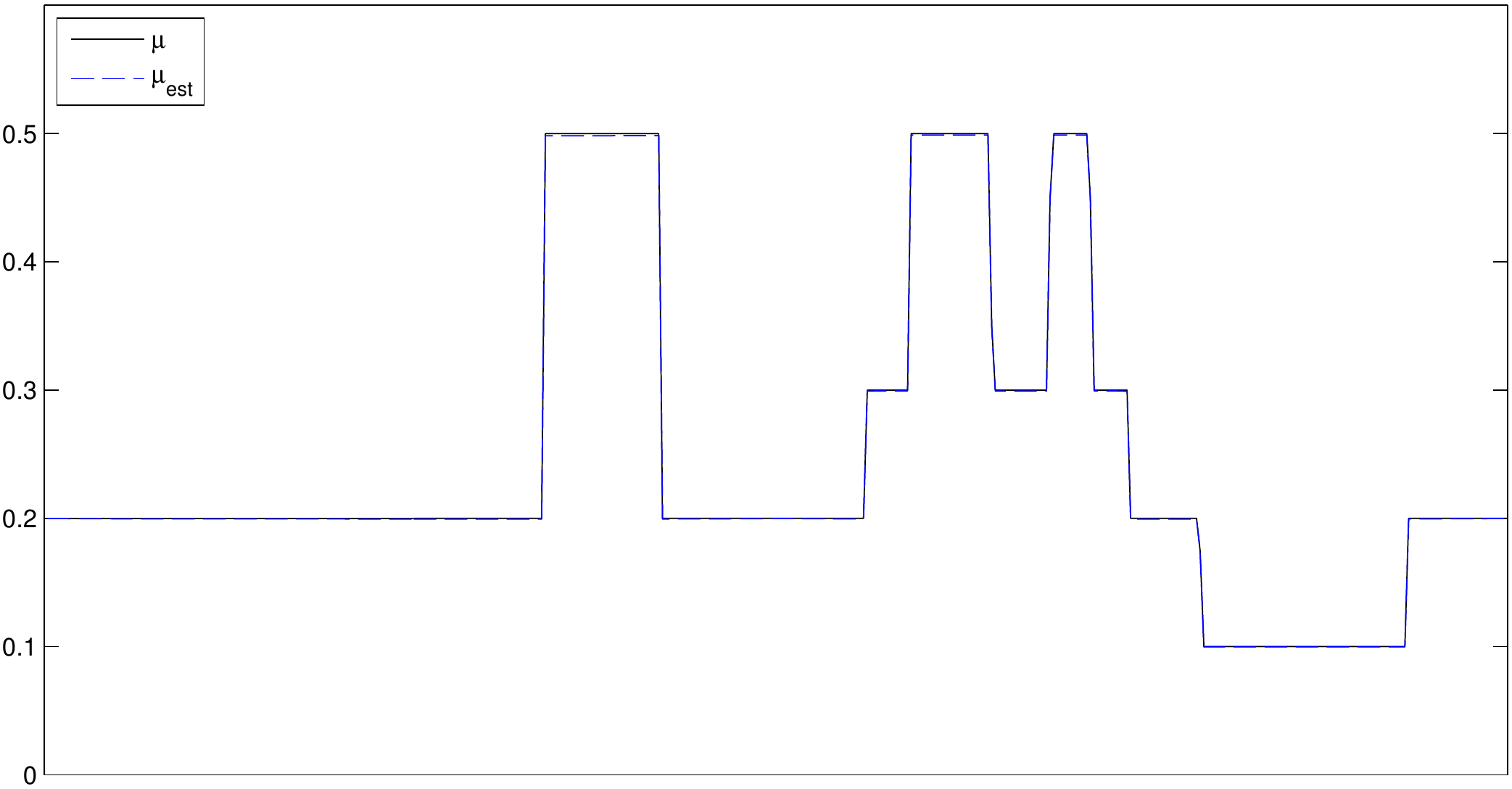}  \hspace{\baselineskip}
		\includegraphics[width=0.29\textwidth]{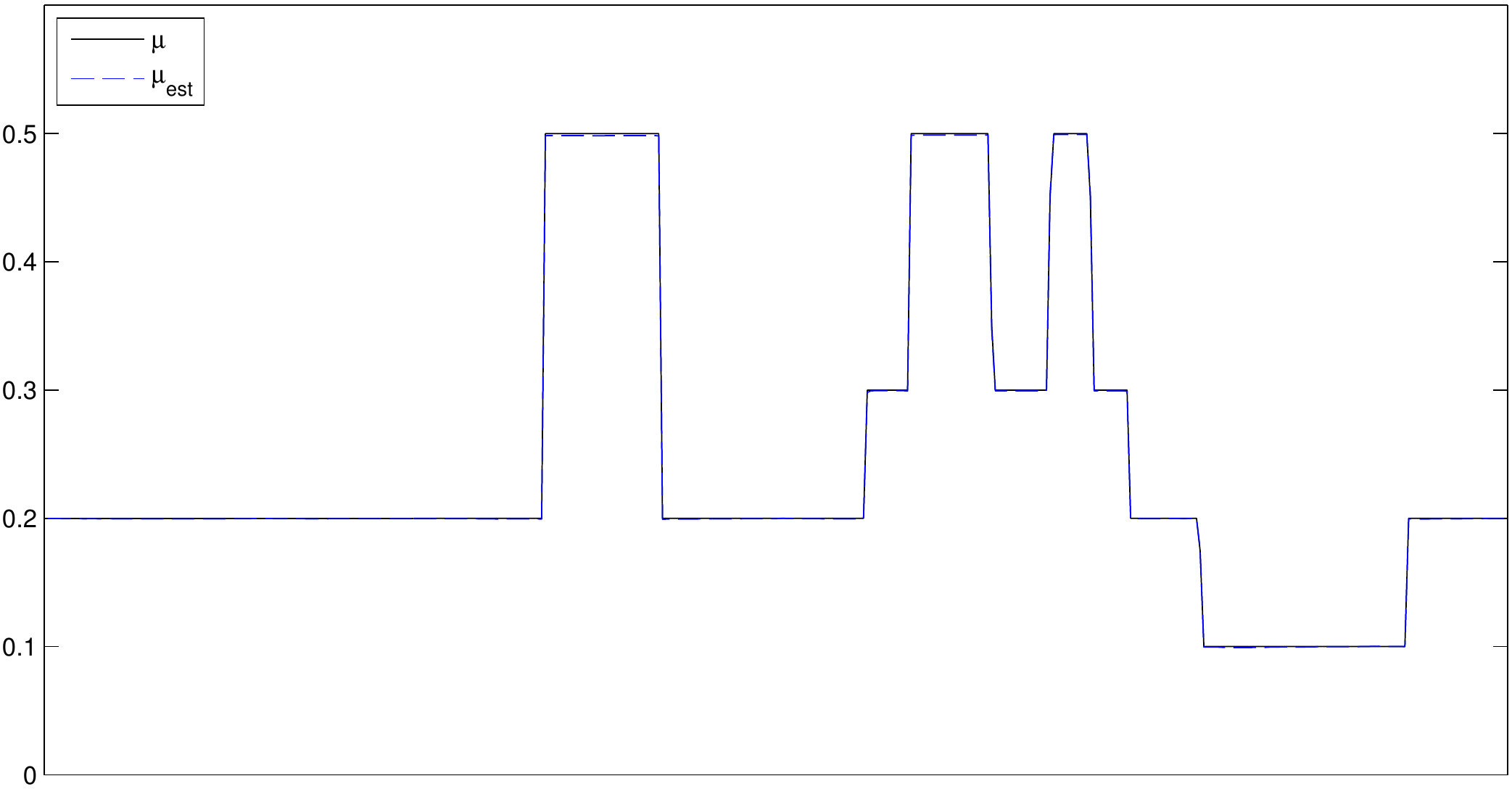}  \hspace{\baselineskip}		
		\includegraphics[width=0.29\textwidth]{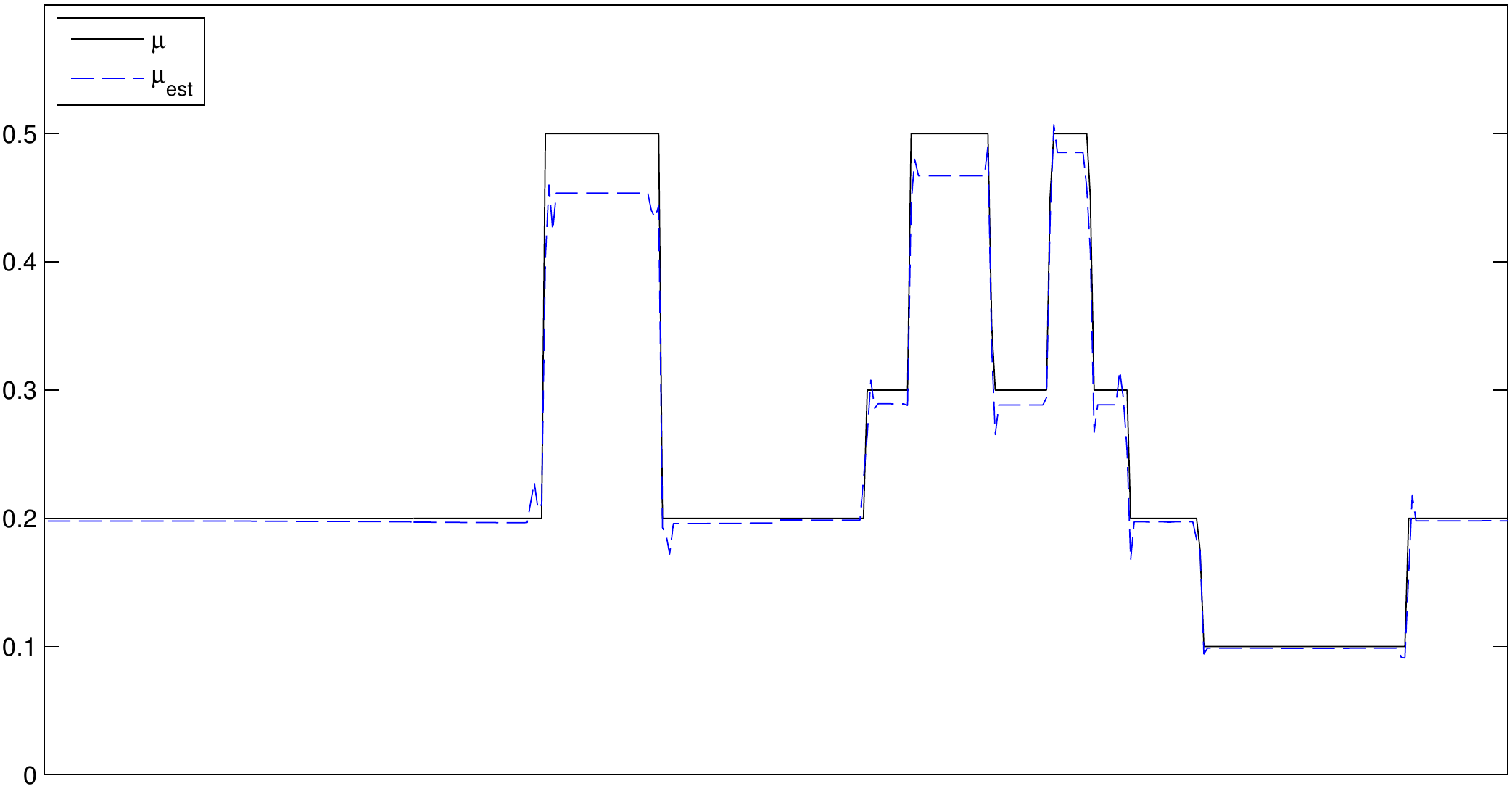}  \\ \vspace{\baselineskip}	
		\includegraphics[width=0.29\textwidth]{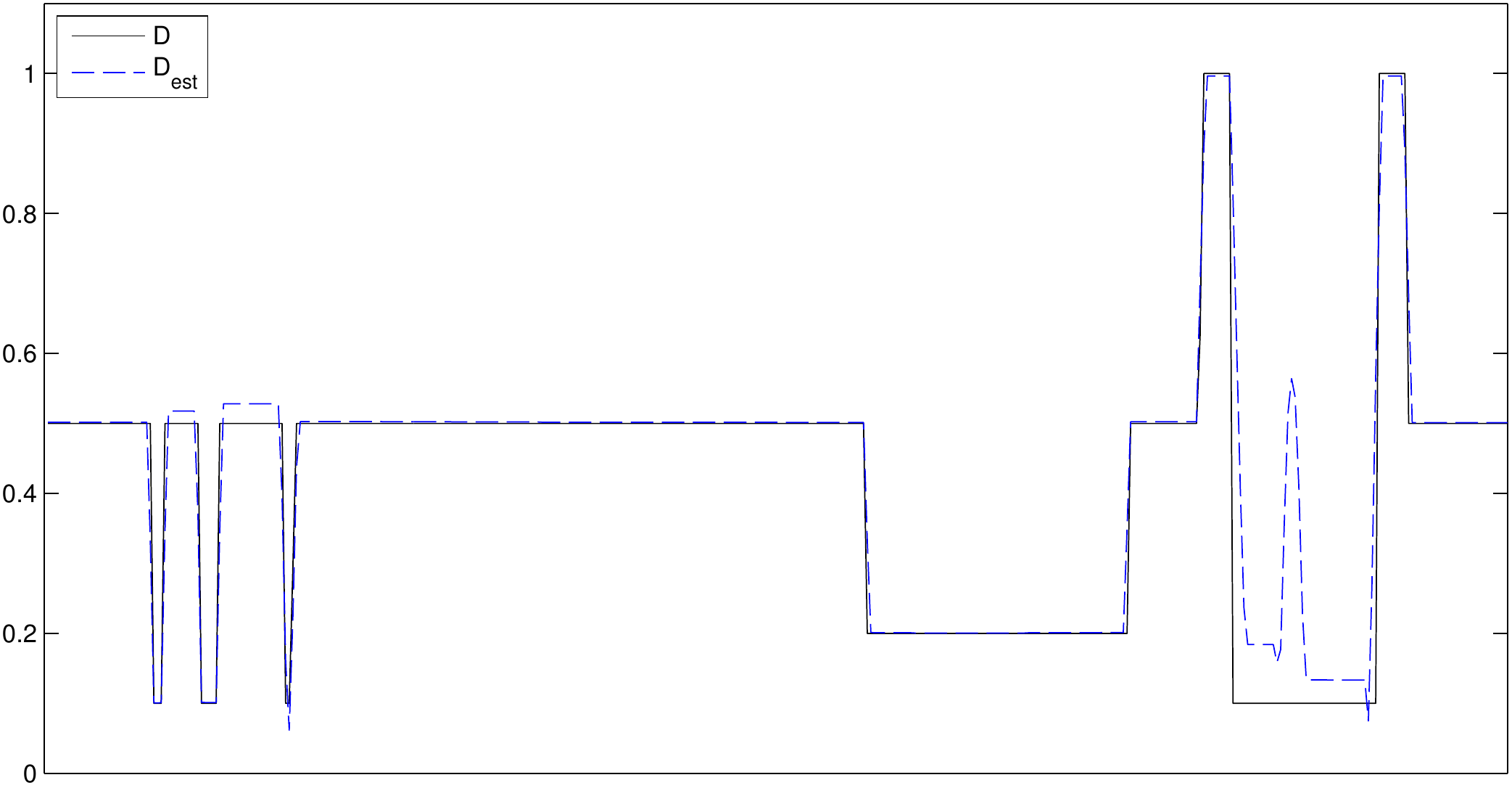}  \hspace{\baselineskip}
		\includegraphics[width=0.29\textwidth]{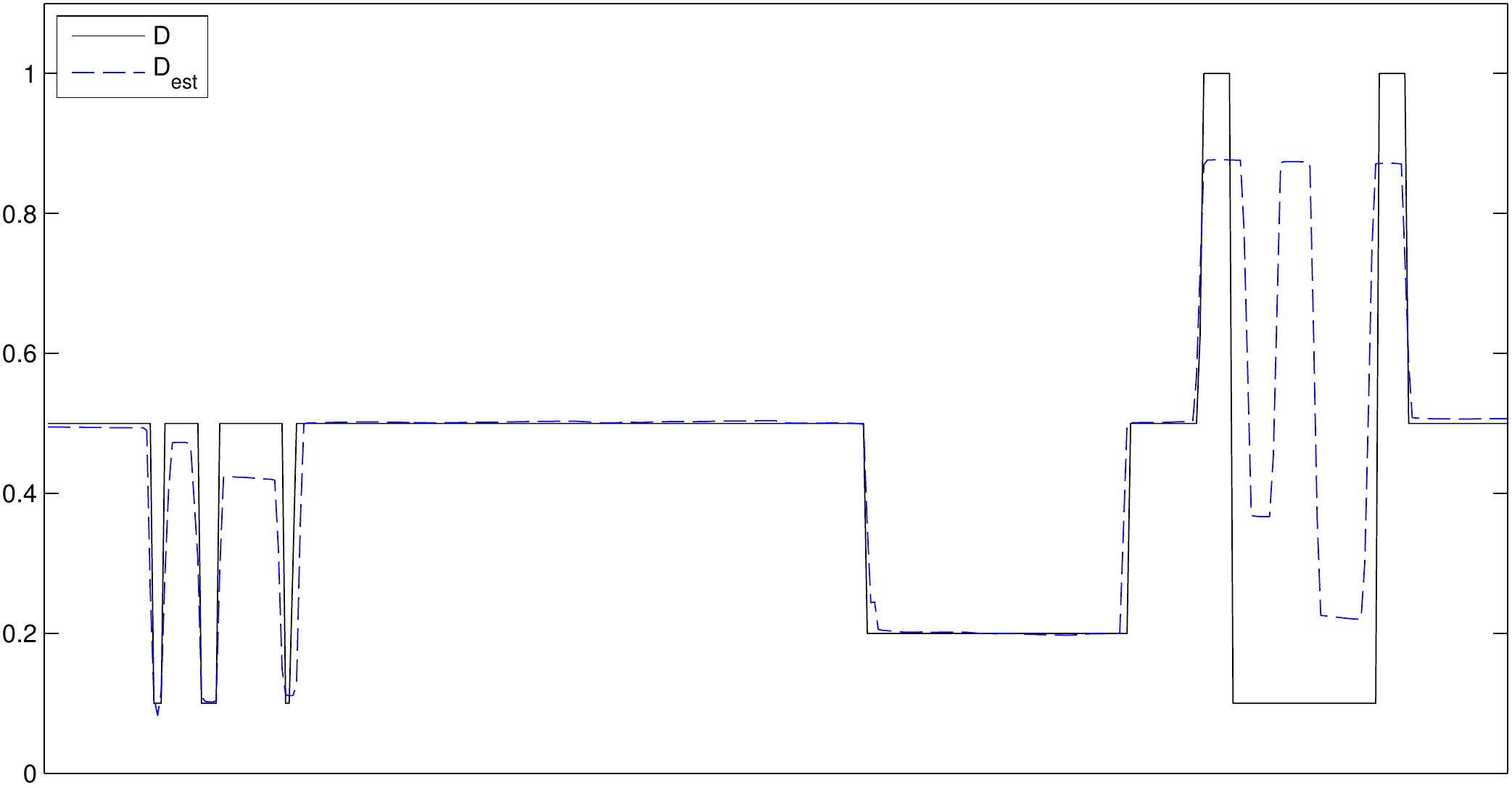}  \hspace{\baselineskip}		
		\includegraphics[width=0.29\textwidth]{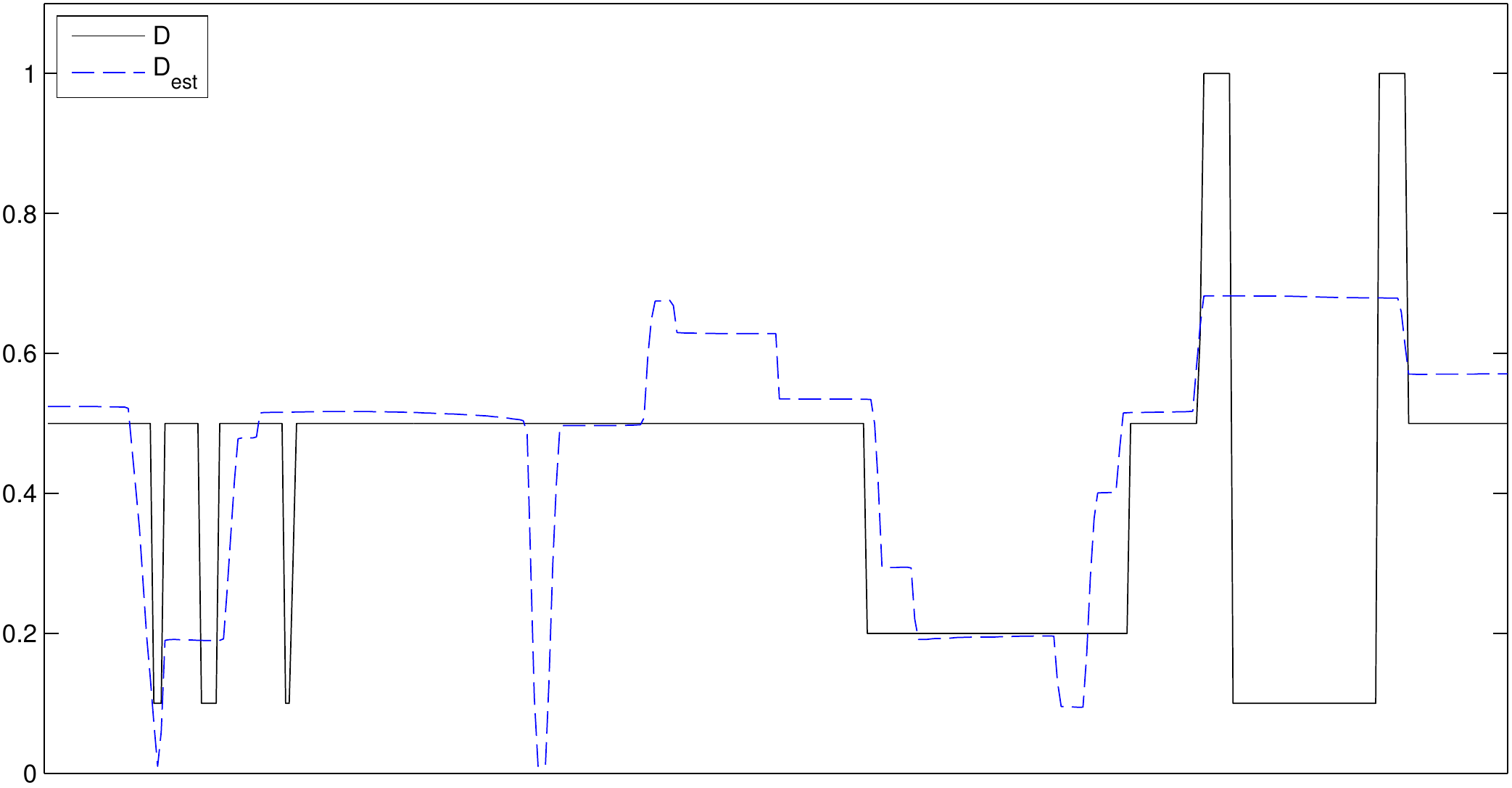}  \\
	\caption[Reconstruction results]
	{ Reconstructions B. Columns: $0\%$, $0.1\%$ and $10\%$ noise. Rows: Estimated edge set, reconstructed parameters $\mu_{\text{est}}$, $D_{\text{est}}$ and error profiles for $\mu_{\text{est}},D_{\text{est}}$.  The error profiles show valued of the true and reconstructed parameters along the image diagonals. The color axis in the images of the reconstructed parameters was fixed to the same range as the true parameters shown as in Figure \ref{fig:setup_rectangles}.  }	
	\label{fig:results_rectangles}	
\end{figure}

\begin{remark}
In some of the examples, minimization of \eqref{eq:qpat_ms_funct} using log-parameters, i.e., the mapping $\overline\E\colon (\log \mu, \log D) \mapsto \E(\mu,D)$ instead of $\E$, gave slightly better results. Note that this approach leads to a different linearization and therefore also a different system \eqref{eq:qpat_ms_system} to be solved in every step. However, to keep the presentation simple, we decided to use the functional as presented in Section \ref{sec:mumford_shah_parameter_detection} for our numerical experiments. 

We also tried to incorporate the Gr\"uneisen coefficient $\Gamma$ as an additional unknown into the reconstruction process, that is, to solve the problem \eqref{prob:problem3}. Unfortunately, this proved to be highly unstable, even if the initial edge set was detected perfectly.
\end{remark}

\section{Acknowledgements}
\label{sec:acknowledgements}

This work has been supported by the Austrian Science Fund (FWF) within the project FSP P26687-"Interdisciplinary Coupled Physic Imaging" and by the IK I059-N funded by the University of Vienna. 

\FloatBarrier

\appendix
\section{Special functions of bounded variation and the SBV-compactness theorem}
\label{sec:sbv_introduction}

This section briefly introduces the notion of $\SBV$-functions and their compactness theorem. For a more comprehensive presentation with proofs, see, e.g., \cite{AttButMic06}.

For a function $f \in L^1(\Omega)$ with distributional gradient $Df$, we define its \emph{total variation} by
\begin{equation*}
	|Df|_\Omega := \sup \{ \langle Df, \varphi \rangle \mid \varphi \in C^1_c(\Omega;\mathbb{R}^N), \norm{\varphi}_{L^\infty(\Omega;\mathbb{R}^N)} \leq 1 \}
\end{equation*}
The space $\BV(\Omega)$, consisting of all $L^1(\Omega)$-functions with finite total variation (i.e., of bounded variation), is a Banach space with the norm
\begin{equation*}
	\norm{f}_{\BV(\Omega)} := \norm{f}_{L^1(\Omega)} + |Df|_\Omega.
\end{equation*}

Note that by the Riesz-Markov representation theorem, functions $f \in L^1(\Omega)$ are of bounded variation if and only if $Df$ is a \emph{finite vector Radon measure}. The measure $Df$ can be decomposed into three parts
\begin{equation*}
	Df = D^a f + D^j f + D^c  f.
\end{equation*}
$D^a f$ is the part of $Df$ that is \emph{absolutely continuous} with respect to the Lebesgue measure $\L^N$, i.e., 
\begin{equation*}
	D^a f = \nabla f \,\L^N
\end{equation*} 
for some integrable density function $\nabla f$. The \emph{jump part} $D^j f$ is concentrated on the jump (or approximate discontinuity) set $S(f)$ defined by
\begin{equation*}
	S(f):=\{ x \in \Omega \mid f^-(x) < f^+(x) \},
\end{equation*}
where, denoting with $B_\rho(x)$ the ball centered at $x$ with radius $\rho$,
\begin{equation*}
\begin{aligned}
	f^+(x) &:= \inf \left\{ t \in \mathbb{R} \mid \lim_{\rho \to 0} \frac{\L^N(\{y \in B_\rho(x) \mid f(y) > t \})}{\L^N(B_\rho(x))}=0 \right\} \\
	f^-(x) &:= \sup \left\{ t \in \mathbb{R} \mid \lim_{\rho \to 0} \frac{\L^N(\{y \in B_\rho(x) \mid f(y) < t \})}{\L^N(B_\rho(x))}=0 \right\}.
\end{aligned}
\end{equation*}
Furthermore, for $H^{N-1}$-almost all $x \in S(f)$, there exists a \emph{unit normal vector} $\nu(x)$ and we have
\begin{equation*}
	D^j f = (f^+ - f^-)\, \nu \, H^{N-1}|_{S(f)}.
\end{equation*}
The remaining \emph{Cantor part} $D^c f$ is concentrated on a subset of $\Omega \setminus S(f)$ with intermediate Hausdorff dimension between $N-1$ and $N$.

A function $f \in \BV(\Omega)$ is a \emph{special function of bounded variation} (or $f \in \SBV(\Omega)$) if $D^c f=0$, that is,
\begin{equation*}
	Df = \nabla f \,\L^N + (f^+ - f^-)\, \nu \, H^{N-1}|_{S(f)}.
\end{equation*}

Furthermore, we have the following \emph{compactness theorem} due to Ambrosio \cite{Amb89a}:
\begin{theorem}
	Let $f_n$ be a sequence in $\SBV(\Omega)$ with
	\begin{equation*}
		\norm{f_n}_{L^\infty(\Omega)} + \int_\Omega |\nabla f_n|^p \,dx + H^{N-1}[S(f_n)] \leq M < \infty 
	\end{equation*}	
	 for all $n \in \mathbb{N}$ and some $p > 1, M > 1$. Then there exist a subsequence $\left( f_{n_k} \right)_{k \in \mathbb{N}}$ and $f \in \SBV(\Omega)$ with
	 \begin{equation*}
	 	\begin{aligned}
	 			f_{n_k} &\to f \text{ strongly in } L^1_{\text{loc}}(\Omega) \\
				\nabla f_{n_k} &\rightharpoonup \nabla f  \text{ weakly in } L^p(\Omega;\mathbb{R}^N) \\
				H^{N-1}[f] &\leq \liminf_{k \to \infty} H^{N-1}[f_{n_k}]. 
	 	\end{aligned}
	 \end{equation*}
\end{theorem}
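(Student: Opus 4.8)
The plan is to first reduce the statement to standard $\BV$ compactness, and then to upgrade the resulting $\BV$-limit to an $\SBV$-limit by means of a one-dimensional slicing argument.

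First I would check that the hypotheses force a uniform $\BV$-bound. Since $\Omega$ is bounded and $p>1$, H\"older's inequality turns the bound on $\int_\Omega |\nabla f_n|^p\,dx$ into a bound on $\int_\Omega |\nabla f_n|\,dx$, while the jump part satisfies $|D^j f_n|(\Omega) = \int_{S(f_n)} |f_n^+ - f_n^-|\,dH^{N-1} \le 2\,\norm{f_n}_{L^\infty(\Omega)}\,H^{N-1}[S(f_n)] \le 2M^2$. Together with the $L^\infty$ bound (hence an $L^1$ bound on the bounded set $\Omega$) this shows $\sup_n \norm{f_n}_{\BV(\Omega)} < \infty$. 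The classical $\BV$ compactness theorem then yields a subsequence $f_{n_k} \to f$ strongly in $L^1_{\text{loc}}(\Omega)$ with $f \in \BV(\Omega)$ and $Df_{n_k} \overset{*}{\rightharpoonup} Df$ as measures. Since $(\nabla f_{n_k})$ is bounded in $L^p(\Omega;\mathbb{R}^N)$, I would pass to a further subsequence so that $\nabla f_{n_k} \rightharpoonup g$ weakly in $L^p$; it then remains to identify $g$ with the absolutely continuous density $\nabla f$, to rule out a Cantor part, and to prove the surface lower semicontinuity.

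The heart of the argument is a reduction to dimension one. For a direction $\xi \in S^{N-1}$ and $y$ in the hyperplane $\xi^\perp$, let $f^{\xi}_{y}(t) := f(y+t\xi)$ be the restriction of $f$ to the line through $y$ in direction $\xi$. The structure theory of $\BV$ tells us that for $H^{N-1}$-a.e.\ $y$ these slices are one-dimensional $\BV$ functions, that $f$ lies in $\SBV(\Omega)$ precisely when $H^{N-1}$-a.e.\ slice is free of Cantor part, and that the one-dimensional derivatives recover both the directional component $g\cdot\xi$ of the gradient and, via the integralgeometric identity
\begin{equation*}
	\int_{\xi^\perp} \#\bigl(S(f^{\xi}_{y})\bigr)\,dH^{N-1}(y) = \int_{S(f)} |\nu_f \cdot \xi|\,dH^{N-1},
\end{equation*}
the surface measure of the jump set after integration over $\xi$. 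For the approximating functions, the slices $(f_{n_k})^{\xi}_{y}$ are one-dimensional $\SBV$ functions inheriting (for a.e.\ line, after Fatou) the uniform bounds, and one-dimensional $\SBV$ closure is elementary: scalar $\SBV$ functions with derivatives bounded in $L^p$ and uniformly many jumps of uniformly bounded height converge to an $\SBV$ function whose derivative is the weak $L^p$-limit of the absolutely continuous parts and whose jump number is lower semicontinuous. Applying this slice by slice, then integrating with Fatou's lemma and the identity above, I would conclude that $g = \nabla f$, that $f$ has no Cantor part (so $f \in \SBV(\Omega)$), and that $H^{N-1}[S(f)] \le \liminf_k H^{N-1}[S(f_{n_k})]$.

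The main obstacle is the slicing machinery itself rather than any single estimate. Making rigorous the claim that the Cantor part vanishes globally exactly when it vanishes on $H^{N-1}$-a.e.\ one-dimensional section — and, dually, recovering the $(N-1)$-dimensional measure of $S(f)$ by counting jumps along lines — relies on the rectifiability of the jump set and on delicate Fubini/coarea-type arguments, which are precisely the measure-theoretic ingredients developed in Ambrosio's original work \cite{Amb89a}. Once those structural facts are in place, the compactness, the identification of $g$ with $\nabla f$, and the surface lower semicontinuity all follow from the (easy) one-dimensional case combined with Fatou's lemma.
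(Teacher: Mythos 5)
First, a point of comparison: the paper does not prove this statement at all. It is Ambrosio's compactness theorem, quoted in Appendix~\ref{sec:sbv_introduction} with a citation to \cite{Amb89a}, and the appendix explicitly refers the reader to \cite{AttButMic06} for proofs. So your proposal can only be judged on its own terms, not against an in-paper argument. Your overall plan --- a uniform $\BV$ bound, classical $\BV$ compactness, then one-dimensional slicing to exclude a Cantor part, recover the gradient, and obtain the surface lower semicontinuity --- is the classical slicing route to this theorem, and it can be made to work. Your preliminary reductions (the H\"older bound on the absolutely continuous part, the bound $|D^j f_n|(\Omega)\le 2\norm{f_n}_{L^\infty(\Omega)}H^{N-1}[S(f_n)]\le 2M^2$, and the extraction of an $L^1_{\text{loc}}$-limit $f\in\BV(\Omega)$ with $\nabla f_{n_k}\rightharpoonup g$ weakly in $L^p$) are correct.

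However, the final assembly step (``applying this slice by slice, then integrating with Fatou's lemma'') hides three genuine gaps, which sit exactly where the theorem is delicate and where $p>1$ must enter. (1) The slices do not inherit uniform bounds: Fatou only gives that for a.e.\ $y$ the $\liminf$ of the one-dimensional quantities is finite, so the one-dimensional closure theorem can only be invoked along a subsequence depending on $y$. This suffices to conclude that a.e.\ slice of $f$ is one-dimensional $\SBV$ (since the slice limit is already identified as $f^\xi_y$), hence $D^c f=0$; but it does not suffice for your other two conclusions. (2) The identification $g=\nabla f$ does not follow slice by slice, because weak $L^p$ convergence does not commute with slicing: the slice of the weak limit need not coincide with the ($y$-dependent, subsequential) weak limits of the slices. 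Equivalently, one must rule out that the jump parts $D^j f_{n_k}$, which have uniformly bounded mass, smear in the limit into an absolutely continuous measure; this needs an extra argument, e.g., a Chebyshev-type truncation in the slice variable $y$ (exploiting the $L^p$ bound with $p>1$) or Ambrosio's chain-rule characterization of $\SBV$. (3) Fatou plus the integral-geometric identity does not give lower semicontinuity of $H^{N-1}[S(\cdot)]$ by itself: the per-slice lower semicontinuity of the jump count holds only along subsequences on which the slice gradients stay bounded in $L^p$, so integration in $y$ and $\xi$ yields lower semicontinuity of a \emph{combined} functional of the form $c_1\int_\Omega|\nabla\cdot|^p\,dx + c_2 H^{N-1}[S(\cdot)]$, not of the surface term alone. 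Isolating the surface term requires a further device --- for instance, weighting the gradient term by $\epsilon$, using the uniform bound $\int_\Omega|\nabla f_n|^p\,dx\le M$, and letting $\epsilon\to0$; or localization on open sets combined with the supremum-of-measures lemma over a countable family of directions. These are not routine Fubini or rectifiability details; they are the substance of Ambrosio's proof, so deferring them to \cite{Amb89a} leaves you with a correct strategy rather than a proof.
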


\def\cprime{$'$}
  \providecommand{\noopsort}[1]{}\def\ocirc#1{\ifmmode\setbox0=\hbox{$#1$}\dimen0=\ht0
  \advance\dimen0 by1pt\rlap{\hbox to\wd0{\hss\raise\dimen0
  \hbox{\hskip.2em$\scriptscriptstyle\circ$}\hss}}#1\else {\accent"17 #1}\fi}

\end{document}